\def\titlerunning#1{\gdef\titrun{#1}}
\def\author#1{\gdef\autrun{\def\and{\unskip, }#1}\gdef\@author{#1}}
\def\address#1{{\def\and{\\\hspace*{18pt}}\renewcommand{\thefootnote}{}%
\footnote {#1}}%
\markboth{\autrun}{\titrun}}
\def\email#1{e-mail: #1}
\def\subjclass#1{{\renewcommand{\thefootnote}{}%
\footnote{\emph{Mathematics Subject Classification (2010):} #1}}}
\def\keywords#1{\par\medskip
\noindent\textbf{Keywords.} #1}
\numberwithin{equation}{section}
\def\@currentlabel{2.1}\label{e:dispaa}
\def\@currentlabel{2.21}\label{e:dispau}
\def\@currentlabel{2.22}\label{e:dispav}
\def\@currentlabel{2.23}\label{e:dispaw}
\def\@currentlabel{2.24}\label{e:dispax}
\def\theequation{\thesection.\@arabic\c@equation}
\newcommand{\D}{\Delta}
\newcommand{\inn}{{\quad\hbox{in } }}
\newcommand{\ass}{{\quad\hbox{as } }}
\newcommand{\onn}{{\quad\hbox{on } }}
\newcommand{\ttt}{\tilde }
\newcommand{\LL}{{\tt L}  }
\newcommand{\nn}{ {\nabla}  }
\newcommand{\pp}{ {\partial} }
\newcommand{\R} {\mathbb R}
\newcommand{\cuad}{{\sqcap\kern-.68em\sqcup}}
\newcommand{\DD}{{\mathcal D}}
\newcommand{\dist}{{\rm dist}\, }
\newcommand{\foral}{\quad\mbox{for all}\quad}
\newcommand{\ve}{\varepsilon}
\newcommand{\be}{\begin{equation}}
\newcommand{\ee}{\end{equation}}
\newcommand{\la}{\lambda}
\newcommand{\equ}[1]{(\ref{#1})}
\renewcommand{\theequation}{\thesection.\arabic{equation}}
 \newtheorem{lemma}{Lemma}[section]
\newtheorem{theorem}{Theorem}
\newtheorem{remark}{Remark}[section]
\newcommand{\bremark}{\begin{remark} \em}
\newcommand{\eremark}{\end{remark} }
\newcommand{\ov}{\overline}
\renewcommand{\O }{\Omega }
\newcommand{\CC}{\mathcal{C}}
\newcommand{\N}{\mathbb{N}}
\newcommand{\e}{\varepsilon}
\newcommand{\del}{\partial}
\newcommand{\pa}{\partial}
\newcommand{\G}{\Gamma}
\newcommand{\s }{\sigma }
\renewcommand{\a }{\alpha }
\renewcommand{\b }{\beta }
\renewcommand{\d}{\delta }
\newcommand{\zn }{{x_{N}}}
\newcommand{\g }{\gamma}
\newcommand{\U}{\Upsilon}
\renewcommand{\theequation}{\thesection.\arabic{equation}}
\begin{document}


\baselineskip=17pt


\titlerunning{Bubbling on boundary submanifolds }

\title{Bubbling on boundary submanifolds for the Lin-Ni-Takagi problem at higher critical exponents}

\author{Manuel del Pino \and
Fethi Mahmoudi\and
Monica Musso}

\date{}

\maketitle

\address{M. del Pino: Departamento de Ingenier\'{\i}a Matem\'atica and
CMM, Universidad de Chile, Casilla 170 Correo 3, Santiago,
Chile; \email{delpino@dim.uchile.cl}
\and
F. Mahmoudi: Departamento de Ingenier\'{\i}a Matem\'atica and
CMM, Universidad de Chile, Casilla 170 Correo 3, Santiago,
Chile; \email{fmahmoudi@dim.uchile.cl}
\and
M. Musso: Departamento de Matem\'atica, Pontificia Universidad Catolica de
Chile, Avda. Vicu\~na Mackenna 4860, Macul, Chile; \email{mmusso@mat.puc.cl}
}

\

\subjclass{35J20; 35J60.}


\begin{abstract}
Let $\Omega$ be a bounded domain in $\R^n $ with smooth boundary $\pp\Omega$. We consider the equation $d^2\Delta u - u+
u^{\frac{n-k+2}{n-k-2}} =0\,\hbox{ in }\,\O $, under
zero Neumann boundary conditions, where $\O$ is open, smooth and
bounded and  $d$ is a small positive  parameter.  We assume that there is  a
$k$-dimensional closed, embedded minimal submanifold $K$ of  $\partial\O$, which is
non-degenerate, and certain weighted average of sectional curvatures of $\pp\Omega$ is positive along $K$. Then
we prove the existence of a sequence $d=d_j\to 0$ and a positive solution $u_d$  such that
$$
d^2 |\nabla u_{d} |^2\,\rightharpoonup \, S\,\delta_K
\ass  d \to 0
$$
 in the sense of measures, where $\delta_K     $
stands for the Dirac measure supported on  $K     $ and $S$ is a positive constant.

\

\keywords{Critical Sobolev Exponent, Blowing-up
Solutions, Nondegenerate minimal submanifolds.}
\end{abstract}






\section{Introduction and statement of  main results}

 Let $\Omega $ be a bounded, smooth domain in $\R^n$, $\nu$ the outer unit normal to $\pp\Omega$  and $q>1$.
  The semilinear Neumann elliptic problem
\be d^2\Delta u  -  u  + u^q = 0 \inn \Omega, \quad \frac{\pp
u}{\pp\nu} = 0 \onn \pp\Omega \label{1}\ee has been  widely
considered in the literature for more than 20 years.  In 1988 Lin, Ni
and Takagi \cite{lnt} initiated the  study of this problem for small
values of $d$, motivated by the {\em shadow system} of the
Gierer-Meinhardt model of biological pattern formation \cite{gm}.
 In that context $u$  roughly represents the (steady) concentration of an activating chemical of the process, which is thought to diffuse
 slowly in the region $\Omega$, leaving patterns of high concentration such as small spots or narrow stripes.

\medskip

\medskip
When $n=2$ or $q< \frac{n+2}{n-2}$ the problem is {\em subcritical},
and a positive least energy solution $u_d$ exists
 by a standard compactness argument. This solution corresponds to a minimizer for the
the Raleigh quotient \be Q_d(u) = \frac {  d^2 \int_\Omega |\nn u|^2
+  \int_\Omega |u|^2}{ \left ( \int_\Omega  |u|^{q+1}\right )^{\frac
2{q+1}}}. \label{e0}\ee
 In the papers \cite{lin,lnt,nt1,nt2} the authors
described accurately the asymptotic behavior of $u_d$ as $d\to 0$.
This function maximizes at exactly one point $p_d$ which lies on
$\pp\Omega$. The asymptotic location of the point $p_d$ gets further
characterized as
$$
H_{\pp\Omega}(p_d) \to  \max_{p\in \pp\Omega} H_{\pp\Omega}(p)
$$
where $H_{\pp\Omega} $ denotes the mean curvature of $\pp\Omega$.
Moreover, the asymptotic shape of $u_d$ is indeed highly
concentrated around $p_d$ in the form \be u_d ( x) \approx w\left (
\frac{|x-p_d|} d \right ) \label{ss}\ee where $w(|x|)$ is the unique
positive, radially symmetric solution to the problem \be \Delta w -
w + w^p =0\inn \R^n,\quad \lim_{|x|\to\infty} w(x) = 0,
\label{ww}\ee which decays exponentially. See also \cite{df0} for a
short proof.

\medskip
Construction of single and multiple {\em spike-layer patterns}  for
this problem in the subcritical case has been the object of many
studies, see for instance  \cite{ao1,ao2,b,cao,dy,df0,dfw,gp,g,gw,l,lnw,wei}
and the surveys \cite{ni0,ni}. In particular, in \cite{wei}  it was
found that whenever one has a non-degenerate critical point $p_0$ of the
mean curvature $H_{\pp\Omega}(p)$, a solution with a profile of the
form \equ{ss} can be found with $p_d \to p_0$.

\medskip
It is natural to look for solutions to Problem \equ{1} that exhibit
concentration phenomena as $d\to 0$ not just at points but on higher
dimensional sets.

\medskip
 Given a $k$-dimensional submanifold $\Gamma$ of $\pp\Omega$ and assuming that either $k\ge n-2$ or  $q< \frac{ n-k+2}{n-k-2}$,
  the question is whether there  exists a solution $u_d$
which near $\Gamma$ looks like

\be u_d(x) \approx   w\left ( \frac { \dist (x,\Gamma)} d   \right )
\label{pp0}\ee where now $w(|y|)$ denotes the unique positive,
radially symmetric solution to the problem
$$
\Delta w - w + w^p =0\inn \R^{n-k},\quad \lim_{|y|\to\infty} w(|y|)
= 0.
$$
 In \cite{mmah,mm1,mm2,m},  the authors have established the existence of a solution with the profile \equ{pp0}
  when either $\Gamma=\pp\Omega$ or $\Gamma$ is an  {\em embedded closed minimal submanifold} of $\pp\Omega$,
  which is in addition {\em non-degenerate} in the sense that
its Jacobi operator is non-singular (we recall the exact definitions
in the next section).  This phenomenon is actually quite subtle
compared with concentration at points: existence can only be
achieved along a sequence of values $d \to 0$. The parameter $d$ must actually
remain suitably away from certain values  where resonance
occurs, and the topological type of the solution changes: unlike the
point concentration case, the Morse index of these solutions is very
large and grows as $d\to 0$.

\medskip
It is natural to analyze the critical case $q=\frac{n+2}{n-2}$,
namely the problem \be d^2 \Delta u  -  u  + u^{\frac{n+2}{n-2}} = 0
\inn \Omega, \quad \frac{\pp u}{\pp\nu} = 0 \onn \pp\Omega.
\label{20}\ee
 The lack of compactness of Sobolev's embedding makes it harder to apply variational arguments. On the other hand,  in \cite{am0,wang} it was
proven that a non-constant least energy solution $u_d$ of \equ{20},
minimizer of \equ{e0} exists, provided that $d$ be sufficiently
small. The behavior of $u_d$ as $d\to 0$ has been clarified in the
subsequent works \cite{apy,npt,rey1}: as in the  subcritical case,
$u_d$ concentrates, having a unique maximum point $p_d$ which lies
on $\pp\Omega$ with
$$
H_{\pp\Omega} (p_d ) \to  \max_{p\in \pp\Omega} H_{\pp\Omega}(p).
$$
Pohozaev's identity \cite{Po}  yields nonexistence of positive
solutions to Problem \equ{ww} when $q=\frac{n+2}{n-2}$, and thus the
concentration phenomenon  must necessarily be different. Unlike the
subcritical case,  $u_d(p_d)\to +\infty$ the profile of $u_d$ near
$p_d$ is given, for suitable $\mu_d\to 0$, by \be     u_d(x) \approx
d^{\frac {n-2}2} w_{\mu_d} (|x-p_d|) \label{pp}\ee
 where  $w_\mu(|x|)  $  corresponds to the family of  radial positive solutions of
\be \Delta w  + w^{\frac{n+2}{n-2}} = 0 \inn \R^n \label{w}\ee
namely
\be
w_\mu(|x|)  =   \alpha_n \left (   \frac{ \mu  } { \mu^2 + |x|^2}
\right )^{\frac{n-2}2}, \quad  \alpha_n= (n(n-2))^{\frac{n-2}4},
\label{wmu}\ee
which up to translations, correspond to all positive solutions of
\equ{w}, see \cite{cgs}. The precise concentration rates $\mu_d$ are
dimension dependent, and found in the works \cite{apy1,gl,rey1}. In
particular $\mu_d \sim    d^2 $ for $n\ge 5$, so that $u_d(p_d) \sim
d^{-\frac{n-2}2}$.

\medskip
As in the subcritical case, construction and estimates for bubbling
solutions to Problem \equ{20} have been
 subjects broadly treated. In addition to the above references we refer the reader to
  \cite{am1,amy,dmp,gg,ggz,lww,msw,rey0,rey2,rey,reywei,zqwang1,wwy,weixu}.

\medskip
In particular, in \cite{amy} it was found  that for $n\ge 6$ and a
non-degenerate critical point $p_0$ of the mean curvature
 with $H_{\pp\Omega}(p_0) >0$, there exists a solution whose profile near $p_0$ is given by
\be     u_d(x) \approx  d^{\frac {n-2}2} w_{\mu_d} (|x-p_0|), \qquad\quad
\mu_d  =  a_n\,H_{\pp\Omega}(p_0) ^{\frac 1 {n-2}} \,d^{2}
\label{pp1}\ee
 for certain explicit constant $a_n>0$. See also \cite{rey0,rey1} for the lower dimensional case.  The condition of critical point for $H_{\pp\Omega}$ with
 $H_{\pp\Omega}(p_0)  >0$ turns out to be necessary for the boundary bubbling phenomenon  to take place, see \cite{apy1,gl}.

\medskip
The concentration phenomenon in the critical scenario is more
degenerate than that in the subcritical case, and its features
harder to be detected because of the rather subtle role of the
scaling parameter $\mu$. The purpose of this paper is to unveil the
corresponding analog of a  solution like \equ{pp1} for the
$k$-dimensional concentration question, in  the so far open
critical case of the {\em $k$-th critical exponent} $q=
\frac{n-k+2}{n-k-2}$, namely for the problem

\be d^2 \Delta u  -  u  + u^{\frac{n-k+2}{n-k-2}} = 0 \inn \Omega,
\quad \frac{\pp u}{\pp\nu} = 0 \onn \pp\Omega.
 \label{3}\ee

\medskip

Notice that for the Dirichlet problem solution concentrating  along boundary geodesics near the second critical exponent have been considered by del Pino, Musso and Pacard in \cite{dmpa}.

\

Let $K     $ be  $k$-dimensional embedded submanifold  of
$\pp\Omega$.  Under suitable assumptions we shall find a solution
$u_d(x)$ which for points $x\in \R^n$ near $K     $,  can be
described as
$$
x= p + z  , \quad p\in K     , \quad |z| = \dist (x,K     ),
$$
we have \be     u_d(x) \approx  d^{\frac {n-k-2}2} w_{\mu_d} ( |z|
), \quad \mu_d(p) =  a_{n-k} \bar H(p) ^{\frac 1 {n-k-2}} d^{2}
\label{pp2}\ee where  $w_\mu$ now denotes
$$
w_\mu(|z|)  =    \alpha_{n-k} \left (  \frac \mu   { \mu^2 + |z|^2}
\right )^{\frac{n-k-2}2}.
$$
The form of the quantity $\bar H(p)$  is of course not obvious.  It
turns out to correspond to
 a weighted average of sectional curvatures of $\pp\Omega$ along $K     $, which we shall need to assume positive.
To explain what it is we need some notation.

\medskip
We denote as customary by $T_p\pp\Omega$ the tangent space to
$\pp\Omega$ at the point $p$. We consider the {\em shape operator} $
\LL :  T_p\pp\Omega \to T_p\pp\Omega$ defined as
$$
\LL[e] := -\nn_e \nu(p)
$$
where $\nn_e \nu(p) $ is the directional derivative of the vector
field $\nu$ in the direction $e$. Let us consider the orthogonal
decomposition
 $$T_p\pp\Omega = T_pK\oplus N_pK $$
where $N_pK$ stands for the normal bundle of $K$. We choose
orthonormal bases
 $(e_a)_{a=1,\ldots, k}$ of $T_pK $ and  $(e_i)_{i=k+1,\ldots, n-1}$ of $N_pK$.

Let us consider the $(n-1)\times (n-1)$ matrix ${ H}(p)$
representative of $\LL$ in these bases, namely
$$
H_{\a\b} (p)=  e_\a \cdot \LL[e_\b] .
$$
This matrix also represents the second fundamental form of
$\pp\Omega$ at $p$ in this basis. $H_{\a\a}(p)$ corresponds to the
curvature of $\pp\Omega$ in the direction $e_\a$. By definition, the
mean curvature of $\pp\Omega$ at $p$ is given by the trace of this
matrix, namely

$$
H_{\pp\Omega}(p) =  \sum_{j=1}^{n-1} H_{jj}(p).
$$

In order to state our result we need to consider the mean of the
curvatures in  the directions of   $T_pK$ and $N_pK$, namely the
numbers  $\sum_{i=1}^k H_{ii}(p)$ and $\sum_{j=k+1}^{n-1}
H_{jj}(p)$.

\begin{theorem} \label{teo1} Assume that  $\partial\Omega$
contains a  closed embedded, non-degenerate  minimal submanifold $K$
of dimension $k\ge 1$  with  $n-k\ge 7$, such that
\begin{equation}
\label{condizionesumu} \bar H(p):=   2 \sum_{a=1}^kH_{aa}(p) +
\sum_{j=k+1}^{n-1}H_{jj}(p) >0 \foral p\in K     .
\end{equation}
 Then, for a sequence
$d= d_j\longrightarrow \,0$, Problem \equ{3} has a positive solution
$u_{d}$ concentrating along $K     $ in the sense that expansion
\equ{pp2} holds as $d\to 0$ and besides
$$
d^2 |\nabla u_{d} |^2\,\rightharpoonup \, S_{n-k} \,\delta_K
\ass  d \to 0
$$
 where $\delta_K     $
stands for the Dirac measure supported on  $K     $ and
 $S_{n-k}$ is an explicit positive constant.
\end{theorem}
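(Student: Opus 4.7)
\medskip
\noindent\textbf{Proof proposal.} The plan is an infinite-dimensional Lyapunov--Schmidt reduction adapted to the codimension-$(n-k)$ concentration set $K$, in the spirit of \cite{mmah,mm1,mm2}, with the distinctive feature that the bubble's concentration parameter $\mu$ enters as an unknown positive function on $K$ rather than as a scalar as in the point-concentration case of \cite{amy}.

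\smallskip
\emph{Ansatz and error.} Introduce Fermi coordinates $(y,z)\in K\times \mathbb{R}^{n-k}$ in a tubular neighborhood of $K$ in $\overline{\Omega}$, with $z_{n-k}\ge 0$ the inward Euclidean normal to $\partial\Omega$, and take as first approximation
\[
U_d(y,z) := d^{(n-k-2)/2}\, w_{\mu(y)d^2}(|z|),
\]
smoothly cut off outside the tube and modified by a reflected correction in the $z_{n-k}$-direction so that $\partial U_d/\partial \nu = 0$ up to exponentially small error. Expanding the Laplacian in these coordinates (the metric involves the second fundamental forms of $\partial\Omega$ and of $K\subset\partial\Omega$), one finds that the mean-curvature contributions from $K$ inside $\partial\Omega$ cancel because $K$ is minimal, while the curvature of $\partial\Omega$ produces precisely the weighted combination $\bar H(y)$ defined in \eqref{condizionesumu}. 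A direct but delicate computation shows that the leading part of the error $S(U_d):=d^2\Delta U_d - U_d + U_d^{(n-k+2)/(n-k-2)}$, once projected onto the dilation mode $\partial_\mu w_\mu$, takes the form of a balance equation $c_1\bar H(y) - c_2\,\mu(y)^{n-k-2} = 0$ with explicit positive constants $c_1,c_2$ depending only on $n-k$. The dimensional restriction $n-k\ge 7$ is what makes the integrals defining $c_1,c_2$ convergent at infinity, and this equation pins down $\mu_d(y) = a_{n-k}\bar H(y)^{1/(n-k-2)} d^2$ provided $\bar H(y)>0$.

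\smallskip
\emph{Linearization and reduction.} Writing $u = U_d + \phi$, I linearize at $U_d$. The bounded kernel of the model linearization on $\mathbb{R}^{n-k}_+$ is spanned by $\partial_\mu w_\mu$ together with the $n-k-1$ tangential translations $\partial_{z_i}w_\mu$ preserving the half-space. A contraction-mapping argument in suitable weighted $L^\infty$ spaces solves the projected nonlinear problem for a small $\phi = \phi(d,\mu)$ orthogonal to this kernel. The remaining reduced system on $K$ splits into (i) a pointwise scalar equation fixing $\mu_d(y)$ as above, and (ii) a linear elliptic equation on $K$ whose principal part is the Jacobi operator of $K$ as a minimal submanifold of $\partial\Omega$, which is invertible by the assumed non-degeneracy.

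\smallskip
\emph{Main difficulty.} The technical heart, and the reason the conclusion holds only along a sequence $d=d_j\to 0$, is spectral. As $d\to 0$ the full linearization on the tube develops infinitely many eigenvalues accumulating near zero, corresponding to Fourier modes on $K$ that resonate with the dilation direction of the bubble. For generic small $d$ the inverse of the projected linearization cannot be controlled and the contraction scheme fails. Adapting the gap argument from \cite{mm1,mm2}, I would construct a one-parameter family of effective Jacobi-type operators on $K$, carry out a careful eigenvalue count, and extract a sequence $d_j\to 0$ along which the inverse of the linearization has norm bounded by some fixed negative power of $d$. Uniform control of this inverse along $d_j$ is what closes the fixed-point argument; the measure-convergence statement $d^2|\nabla u_d|^2 \rightharpoonup S_{n-k}\delta_K$ is then an immediate consequence of the concentration of $U_d$ and the smallness of $\phi$, with $S_{n-k}$ equal to the Dirichlet energy of the standard bubble $w_1$ on $\mathbb{R}^{n-k}$.
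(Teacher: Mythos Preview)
Your overall architecture---Lyapunov--Schmidt reduction, an algebraic equation for the concentration parameter, a Jacobi equation for the normal section, and a spectral gap argument to extract a good sequence $d_j$---matches the paper's. But two of the mechanisms you identify are wrong in ways that would derail the actual computation.

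\medskip
\textbf{The balance for $\mu$ is not a power law.} Your equation $c_1\bar H(y)-c_2\,\mu(y)^{n-k-2}=0$ is the wrong balance for this problem. In the paper's rescaled variables ($\ve=d^2$, $N=n-k$), the equation becomes $-\mathcal A_{\mu_\ve,\Phi_\ve}W+\ve\mu_\ve^2 W-W^p=0$, and the leading error for $W=w_0$ is
\[
\ve\mu_0\Bigl[H_{\alpha\alpha}\partial_N w_0-2\xi_N H_{ij}\partial^2_{ij}w_0-\mu_0\,w_0\Bigr].
\]
The term $\mu_0 w_0$ comes from the linear lower-order term $-u$, not from any self-interaction of the bubble. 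Projecting onto $Z_0=\xi\cdot\nabla w_0+\tfrac{N-2}{2}w_0$ and using $\int w_0 Z_0=-\int w_0^2$ yields the \emph{linear} relation
\[
\mu_0(y)=\frac{\int_{\R^N_+}\xi_N|\partial_1 w_0|^2}{\int_{\R^N_+}w_0^2}\Bigl[2\sum_a H_{aa}(y)+\sum_i H_{ii}(y)\Bigr],
\]
not $\mu_0\propto \bar H^{1/(N-2)}$. The convergence issue is $\int_{\R^N}w_0^2<\infty$, which needs only $N\ge 5$; the assumption $N\ge 7$ is used elsewhere, in the energy expansion (to make $\gamma(N-4)>2$ with $\gamma<1$), not in defining this balance. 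So your stated reason for the dimensional restriction is also off.

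\medskip
\textbf{The resonance is not in the dilation direction.} You attribute the small-eigenvalue problem to ``Fourier modes on $K$ that resonate with the dilation direction.'' In fact, the quadratic form in the $Z_0$ (dilation) component is coercive: it reads $\tfrac{A_\ve}{2}\ve^2\int_K|\partial_a\delta|^2+\tfrac{B}{2}\ve\int_K\delta^2$ with $B=\int w_0^2>0$. The resonance occurs in the \emph{unstable} direction $Z$, the positive eigenfunction of $-\Delta-pw_0^{p-1}$ on $\R^N$ with eigenvalue $-\lambda_0<0$. The corresponding reduced form is $\ve^2\int_K|\partial_a e|^2-\lambda_0\int_K e^2$, and it is these eigenvalues that cross zero as $\ve\to 0$, forcing the Weyl-law counting argument and the selection of $\ve_l$. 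If you set up your reduction around $Z_0,Z_j$ only and omit $Z$, you will not be able to invert the linearization on the orthogonal complement.

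\medskip
Two smaller points: no reflected correction is needed (the standard bubble $w_0$ is radial, so $\partial_{\xi_N}w_0=0$ on $\{\xi_N=0\}$ already); and because the linear inverse loses a factor $\ve^{-\max\{2,k\}}$, the paper iterates the approximate solution to arbitrarily high order $I$ before running the contraction---your single-bubble ansatz would not produce an error small enough to close.
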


Condition \equ{condizionesumu}  is new and  unexpected. On the other
hand, it is worth noticing that  \equ{condizionesumu} can be
rewritten as
$$
2 H_{\pp\Omega} (p) -  \sum_{j=k+1}^{n-1}H_{jj}(p) > 0 \foral p\in
K     .
$$
Formally in the case of point concentration, namely $k=0$, this
reduces precisely to $H_{\pp\Omega} (p)>0$, that is exactly the
condition known to be necessary for point concentration. We suspect
that this condition is essential for the phenomenon to  take place.
On the other hand, while the high codimension assumption $n-k\ge 7$
is important in our proof, we expect that a similar phenomenon holds
just provided that $n-k\ge 5$, and with a suitable change in the bubbling scales
for $n-k\ge 3$ (the difference of rates is formally due to the fact that $\int_{\R^{n}} w_\mu^2$ is finite if and only if $n\ge 5$).

\medskip

It will be convenient to rewrite Problem \equ{3} in an equivalent form: Let us
set  $N= n-k$ and  $d^2 = \ve $. We
define
 $$ u(x) = \ve^{-\frac{N-2} 4}
v(\ve^{-1} x ). $$ Then, setting $\O_\ve := \ve^{-1}\O$,  Problem \equ{3} becomes
\begin{equation}\label{eq:pe}
  \begin{cases}
    \Delta v -\ve v +v^{\frac{N+2}{N-2}} =0 & \text{ in } \O_\ve, \\
    \frac{\del v}{\pa \nu} = 0 & \text{ on } \partial \O_\ve.
  \end{cases}
\end{equation}

The proof of the theorem has as a main ingredient the construction
of an approximate solution with arbitrary degree of accuracy in
powers of $\ve$, in a neighborhood of the manifold $K_\ve=\ve^{-1}K$. Later we
built the desired solution by linearizing the equation \equ{eq:pe}
around this approximation. The associated linear operator turns out
to be invertible with inverse controlled in a suitable norm by
certain large negative power of $\ve$, provided that $\ve$ remains
away from certain critical values where resonance occurs. The
interplay of the size of the error and that of the inverse of the
linearization then makes it possible a fixed point scheme.

\medskip
The accurate approximate solution to \equ{eq:pe} is built by using an iterative
scheme of Picard's type which we describe in general next.

Observe that the desired asymptotic behavior \equ{pp2} translates in terms of $v$ as

\be v(x) \approx  \mu_0( \ve z)^{-\frac {N-2}2} w_0\left (
\mu_0^{-1}(\ve z) |\zeta| \right),  \quad x= z + \zeta, \quad z\in
K_\ve, \quad |\zeta|=  \dist (x,K_\ve), \label{op}\ee where
 $$ \mu_0(y) = a_{N} \bar H(y) ^{\frac 1 {N-2}} \quad y= \ve z \in K.$$
Here and  in what follows  $w_0$ designates  the standard
bubble,
\be
w_0(\xi)  = w_0(|\xi|)=  \alpha_N \left (   \frac{ 1  } { 1 + |\xi|^2}
\right )^{\frac{N-2}2}, \quad  \alpha_N= (N(N-2))^{\frac{N-2}4}.
\label{w0}\ee

 We
introduce  the so-called Fermi  coordinates on a neighborhood of $K_\ve:=\ve^{-1}K$, as a suitable tool to describe the approximation \equ{op}.
They are defined  as follows (we refer to
Subsection \ref{ss:fc} for further details):  we parameterize a neighborhood of
$K_\ve$   using the exponential map in $\pp\O_\ve$
\begin{eqnarray*}
&&(z,\bar X,X_N)\in K_\ve \times \R^{N-1}\times \R_+ \mapsto
\U (z,\ov X, X_N)\ :=\
\\
&&\qquad\exp^{\pa \O_\ve}_{
z}( \sum\limits_{i=1}^{N-1}\,X_i\,E_i)- X_N \nu\left( \exp^{\pa
\O_\ve}_{ z}( \sum\limits_{i=1}^{N-1}\,X_i\,E_i)\right).
\end{eqnarray*}
Here the vector fields $E_i(z)$ represent an orthonormal basis of $N_z K_\ve$.
Thus, \equ{op} corresponds to the statement that after expressing $v$ in these coordinates we get
$$
v(z,\bar X,X_N) \approx  \mu_0(\ve z)^{-\frac {N-2}2} w_0\left ( \mu_0^{-1}(\ve z) ( \bar X,X_N )\right )
$$
where suitable corrections need to be introduced if we want further accuracy on the induced error:
we consider a positive smooth function $\mu_\ve
= \mu_\ve (p) $  defined on $K$, a smooth  function  $\Phi_\e\,:K\longrightarrow\, \R^{N-1}$,
 and the change of variables (with some abuse of notation)
\begin{equation}
  v(z,\ov X, X_N)=\mu_{\varepsilon}^{-\frac{N-2}{2}}(\ve z) \, W(\ve^{-1}y,\mu_{\varepsilon}^{-1}(\ve z)(\ov X-\Phi_\e (\ve z)),\,\mu_{\varepsilon}^{-1}(\ve z)X_N ),
\end{equation} \label{b2}
with the new $W$ being a function
\begin{equation*}
  W(z,\xi), \quad z= {y \over \ve}, \quad \ov\xi=\frac{\ov X-\Phi_\e}{\mu_\ve}, \quad \xi_N=\frac{X_N}{\mu_\ve}.
\end{equation*}
We will formally expand $W(z,\xi)$ in powers of $\ve$ starting with
$w_0(\xi)$, with the functions
 $\Phi_\e(y)$ and $\mu_\ve(y)$ correspondingly expanded.

 \medskip
 Substituting into the equation,  we will arrive
formally to linear equations satisfied by the successive remainders of $w_0(\xi)$ (as functions of $\xi$). These linear equations involve the basic linearized operator
$\mathcal{L}:=
-\Delta-pw_0^{p-1}$.

The bounded solvability of the linear equations at
each step of the iteration, is guaranteed by imposing orthogonality conditions of their right-hand
sides,  with respect to $\ker(\mathcal{L})$ in $L^\infty(\R^N)$.  These  orthogonality conditions, amount to choices of the coefficients of the expansions of  $\mu_\ve$
and $\Phi_\ve$: for the latter, the equations involve the Jacobi operator of $K$ and it is where the nondegeneracy
assumption is used. The coefficients for the expansion of $\mu_\ve(\ve z)$ come from algebraic relations, in particular an orthogonality condition
in the first iteration yields
\begin{equation*}
\mu_0 (y):=\,   a_N \left[2 \,\sum_{j=1}^k H_{jj}(y)+ \sum_{i=k+1}^{N+k-1} H_{ii}(y)
\right] \quad y \in K.
\end{equation*}
 This is exactly where the sign condition \equ{condizionesumu} in the
 theorem appears.

\medskip
The rest of the paper is organized as follows. We first introduce some
notations and conventions. Next, we collect some notions in
differential geometry, like the Fermi coordinates (geodesic normal
coordinates) near a minimal submanifold and we expand the
coefficients of the metric near these Fermi coordinates. In Section
3 we expand the Laplace-Beltrami operator. Section 4 will be mainly
devoted to the construction of the approximate solution to our
problem using the local coordinates around the submanifold $K$
introduced before. In Section 5 we define globally the approximation and we write the solution to our problem as the sum
 of the global approximation plus a remaining term. Thus we express our original problem as a non linear problem in the remaining term.
 To solve such problem, we need to understand the invertibility properties of a linear operator. To do so we start expanding a quadratic functional
 associated to the linear problem.
 In Section 6 we develop a linear theory to study
our problem. Then, we turn to the proof of our main theorem in Section 7. Sections 8 and 9 are Appendices, where we postponed the proof of some technical facts
to facilitate the reading of the paper.

\

\setcounter{equation}{0}
\section{Geometric setting}\label{sec4}

\bigskip
In this section we first introduce Fermi coordinates near a
$k$-dimensional submanifold of $\pa \O \subset \R^{n}$ (with
$n=N+k$) and we expand the coefficients of the metric in these
coordinates. Then, we recall some basic notions about minimal and
non-degenerate submanifold.

\medskip
\subsection{ Notation and conventions}
\noindent  Dealing with coordinates, Greek letters like $\a, \b,
\dots$, will denote indices varying between $1$ and $n-1$, while
capital letters like $A, B, \dots$ will vary between $1$ and $n$;
Roman letters like $a$ or $b$ will run from $1$ to $k$, while
indices like $i, j, \dots$ will run between $1$ and $N-1:= n - k -
1$.

\medskip\noindent  $\xi_{1}, \dots, \xi_{N-1}, \xi_{N}$ will denote coordinates
in $\R^{N}=\R^{n-k}$, and they will also be written as $\bar
\xi=(\xi_{1}, \dots, \xi_{N-1})$, $\xi=(\bar \xi,\xi_N)$.

\medskip\noindent  The manifold $K$ will be parameterized with coordinates
$y = (y_1, \dots, y_k)$. Its dilation $K_\e := \frac 1 \e K$ will be
parameterized by coordinates $z=(z_1, \dots, z_k)$ related to the
$y$'s simply by $y = \e z$.

\medskip\noindent  Derivatives with respect to the variables $y$, $z$ or
$\xi$ will be denoted by $\pa_{y}$, $\pa_z$, $\pa_\xi$, and for
brevity sometimes we might use the symbols $\pa_{a}$, $\pa_{\ov a}$
and $\pa_i$ for $\pa_{y_a}$, $\pa_{z_a}$ and $\pa_{\xi_i}$
respectively.

\medskip\noindent In a local system of coordinates, $(\ov{g}_{\a \b})_{\a
\b}$ are the components of the metric on $\pa \O$ naturally induced
by $\R^n$. Similarly, $(\ov{g}_{AB})_{AB}$ are the entries of the
metric on $\O$ in a neighborhood of the boundary. $(H_{\a \b})_{\a
\b}$ will denote the components of the mean curvature operator of
$\partial \O$ into $\R^n$.

\medskip
\subsection{Fermi coordinates on $\pa\O$ near $K$ and expansion of the metric}\label{ss:fc}
Let $K$ be a $k$-dimensional submanifold of $(\partial\O,\ov g)$
($1\le k\le N-1$). We choose
along $K$ a local orthonormal frame field $((E_a)_{a=1,\cdots
k},(E_i)_{i=1,\cdots, N-1})$ which is oriented. At points of $K$, we have the natural splitting
$$T\pa \O=T K \oplus N K$$ where $T K$ is the
tangent space to $K$ and $N K$ represents the normal bundle, which
are spanned respectively by $(E_a)_a$ and $(E_j)_j$.

We denote by $\nabla$ the connection induced by the metric $\ov{g}$ and by
$\nabla^N$ the corresponding normal connection on the normal bundle.
Given $p \in K$, we use some geodesic coordinates $y$ centered at
$p$. We also assume that at $p$ the normal vectors $(E_i)_i$, $i =
1, \dots, n$, are transported parallely (with respect to $\nabla^N$)
through geodesics from $p$, so in particular
\begin{equation}\label{eq:parall}
    \ov g\left(\nabla_{E_a}E_j\,,E_i\right)=0  \quad \hbox{ at } p,
    \qquad \quad i,j = 1, \dots, n, a = 1, \dots, k.
\end{equation}
In a neighborhood of $p$ in $K$, we consider normal geodesic
coordinates
\[
f(y) : = \exp^K_p (y_a\, E_a), \qquad y := (y_{1}, \ldots, y_{k}),
\]
where $\exp^K$ is the exponential map on $K$ and summation over repeated
indices is understood. This yields the coordinate vector fields
$X_a : = f_* (\del_{y_a})$. We extend the $E_i$ along each $\gamma_E(s)$ so that they are parallel
with respect to the induced connection on the normal bundle $NK$.
This yields an orthonormal frame field $X_i$ for $NK$ in a neighborhood of
$p$ in $K$ which satisfies
\[
\left. \nabla_{X_a} X_i \right|_p \in T_p K.
\]

A coordinate system in a neighborhood of $p$ in $\partial\Omega$ is now defined by
\begin{equation}\label{eqF}
F(y,\bar x) := \exp^{\partial\Omega}_{f(y)}( x_i \, X_i), \qquad
(y,\bar x) :=( y_{1}, \ldots, y_{k},x_1, \ldots, x_{N-1}),
\end{equation}
with corresponding coordinate vector fields
\[
X_i : = F_* (\del_{x_i}) \qquad \mbox{and} \qquad  X_a : = F_*
(\del_{y_a}).
\]


By our choice of coordinates, on $K$ the metric $\ov{g}$ splits in
the following way
\begin{equation}\label{eq:splitovg}
    \ov g(q) = \ov g_{ab}(q)\,d y_a\otimes d y_b+\ov
g_{ij}(q)\,dx_i\otimes dx_j, \qquad \quad q \in K.
\end{equation}
We denote by $\Gamma_a^b(\cdot)$ the 1-forms defined on the normal
bundle, $NK$, of $K$ by the formula~
\begin{equation}\label{eq:Gab}
\ov g_{bc} \Gamma_{ai}^c:=  \ov g_{bc} \Gamma_a^c(X_i)=\ov g(\nabla_{X_a}X_b,X_i) \quad \hbox{at } q=f(y).
\end{equation}
Notice that
\begin{equation}
\label{eq:min}
    K \hbox{ is minimal } \qquad \Longleftrightarrow \qquad \sum_{a=1}^k\G^a_a(E_i)
    = 0 \quad \hbox{ for any } i = 1, \dots N-1.
\end{equation}

Define $q=f(y)=F(y,0)\in K$ and let $(\tilde g_{ab}(y))$ be the induced metric on $K$.

When we consider the metric coefficients in a neighborhood of $K$,
we obtain a deviation from formula \eqref{eq:splitovg}, which is
expressed by the next lemma. The proof follows the same ideas as Proposition 2.1 in \cite{mmp} but we give it here for completeness.   See also the book \cite{Schoen-Yau}.  
We will denote by $R_{\a\b\g\d}$ the components of the curvature tensor
with lowered indices, which are obtained by means of the usual ones
$R_{\b\g\d}^\s$ by~
\begin{equation}
\label{ctens} R_{\a\b\g\d}=\ov
g_{\a\s}\,R_{\b\g\d}^\s.\end{equation}

\begin{lemma} At the point $F(y,\bar x)$, the following expansions hold,
for any $a=1,...,k$ and any $i,j=1,...,N-1$, we have
\[
\begin{array}{rllll}\ov g_{ij}&=\delta_{ij}+\frac{1}{3}\,R_{istj}\,x_s\,x_t\,
+\,{\mathcal O}(|x|^3);\\[3mm]
\ov g_{aj}&=
{\mathcal O}(|x|^2);\\[3mm]
\ov g_{ab}&=\tilde{g}_{ab}-\bigg\{\tilde{g}_{ac}\,\Gamma_{bi}^c+\tilde{g}_{bc}\,\Gamma_{ai}^c\bigg\}\,x_i
+\bigg[R_{sabl}+\tilde g_{cd}\Gamma_{as}^c\,\Gamma_{bl}^d \bigg]x_s x_l+\,{\mathcal O}(|x|^3).
\end{array}
\]
Here $R_{istj}$ (see \equ{ctens}) are  computed at the point of
$K$ parameterized by $(y,0)$. \label{lemovg}
\end{lemma}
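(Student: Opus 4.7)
The plan is to fix $y$ and Taylor-expand each metric coefficient $\ov g_{\a\b}(y,\bar x)$ in $\bar x$ around $0$, using only the defining properties of the Fermi parametrization $F$. Three structural facts organize the computation: at $\bar x=0$ the coordinate vector fields $X_a,X_i$ coincide with the orthonormal frame $E_a,E_i$, which gives the zero order of \eqref{eq:splitovg}; the curves $t\mapsto F(y,t\bar x)$ are $\pa\O$-geodesics, so by the Gauss lemma $\ov g(x^iX_i,X_a)\equiv 0$ and $\ov g(x^iX_i,X_j)=x^j$; and coordinate vector fields commute, $[X_\a,X_\b]=0$, so that $\nabla_{X_\a}X_\b=\nabla_{X_\b}X_\a$.

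For the first-order terms I would differentiate $\ov g(X_\a,X_\b)$ via
\begin{equation*}
\pa_{x^k}\ov g_{\a\b}=\ov g(\nabla_{X_k}X_\a,X_\b)+\ov g(X_\a,\nabla_{X_k}X_\b),
\end{equation*}
evaluated at $\bar x=0$. The Gauss identity forces $\pa_{x^k}\ov g_{ij}|_{\bar x=0}=0$; moreover, differentiating $x^i\ov g_{ai}=0$ once and combining with \eqref{eq:parall} and the parallel-transport extension of the $E_i$ along $K$-geodesics yields $\pa_{x^k}\ov g_{aj}|_{\bar x=0}=0$, hence $\ov g_{aj}=\mathcal O(|x|^2)$. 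For $\ov g_{ab}$, commutativity gives $\nabla_{X_k}X_a=\nabla_{X_a}X_k$, and the tangent-component reduces by \eqref{eq:Gab} to the coefficients $\Gamma_{ak}^c$, producing the linear term $-\{\tilde g_{ac}\Gamma_{bk}^c+\tilde g_{bc}\Gamma_{ak}^c\}x^k$.

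The second-order terms require one more differentiation of the Koszul identity and the defining relation
\begin{equation*}
\ov g(R(X_s,X_\a)X_\b,X_\g)=\ov g(\nabla_{X_s}\nabla_{X_\a}X_\b-\nabla_{X_\a}\nabla_{X_s}X_\b,X_\g),
\end{equation*}
valid because $[X_s,X_\a]=0$. For $\ov g_{ij}$ this is the standard Jacobi-equation expansion along the normal geodesic emanating from $f(y)$; after symmetrization consistent with \eqref{ctens} it produces the coefficient $\tfrac13R_{istj}$. For $\ov g_{ab}$ the same procedure yields two contributions: a genuine curvature piece $R_{sabl}x_sx_l$ coming from the commutator of covariant derivatives, and a quadratic Christoffel piece $\tilde g_{cd}\Gamma_{as}^c\Gamma_{bl}^d x_sx_l$ arising when one differentiates the already first-order term $-\tilde g_{ac}\Gamma_{bk}^c x^k$, morally the square of the second-fundamental-form contribution of $K$ inside $\pa\O$.

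The delicate step, and where I expect the main obstacle, is the second-order computation for $\ov g_{ab}$: one has to separate cleanly the intrinsic curvature contribution from the iterated Christoffel contribution, keep track of which frame is being differentiated in which direction (normal versus tangent to $K$), and fix signs and index symmetries so that the final coefficient of $x_sx_l$ matches $R_{sabl}+\tilde g_{cd}\Gamma_{as}^c\Gamma_{bl}^d$ as stated. The parallelism assumption \eqref{eq:parall} holds only at $p$, so care is needed to verify that the extra terms that would appear away from $p$ are genuinely absorbed into the $\mathcal O(|x|^3)$ remainder rather than polluting the quadratic coefficient.
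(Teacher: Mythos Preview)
Your outline is correct and follows essentially the same route as the paper's proof: Taylor-expand the metric coefficients at $q=f(y)$, use the geodesic property of the normal curves (what you call the Gauss lemma, what the paper writes as $\nabla_X X|_q=0$ for $X\in N_qK$) together with commutativity of coordinate fields to kill the first-order terms in $\ov g_{ij}$ and $\ov g_{aj}$, and use \eqref{eq:Gab} to produce the linear term in $\ov g_{ab}$. For the quadratic terms the paper does exactly what you sketch; the only point worth noting is that where you invoke the ``standard Jacobi-equation expansion'', the paper makes this concrete by polarizing $X_kX_k\,\ov g_{\a\b}$ and deriving the key identity $3\,\nabla_{X_k}^2X_j\big|_q=R(X_k,X_j)X_k\big|_q$ directly from $\nabla_{X_k+\ve X_j}^2(X_k+\ve X_j)|_q=0$, which immediately gives the $\tfrac13R_{istj}$ coefficient and, combined with $\nabla_{X_k}^2X_a=\nabla_{X_a}\nabla_{X_k}X_k+R(X_k,X_a)X_k$, cleanly separates the curvature and $\Gamma\Gamma$ contributions in $\ov g_{ab}$.
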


\begin{proof}
The Fermi coordinates above are defined such that  the metric coefficients
\[
g_{\alpha \beta} = g( X_\alpha , X_\beta)
\]
is equal to $\delta_{\alpha\beta}$ at $p=F(0,0)$ and $g_{ab} =\tilde g_{ab}(y)$ at the point $q=F(y,0)$   furthermore,
$g(X_a, X_i) =0$ in some neighborhood of $q$ in $K$.
A Taylor expansion of the metric $\ov g_{\a\b}(\ov x,y)$ at $q$ is given by
\begin{eqnarray*}
\ov g_{\a\b}&=&\ov g(X_\a,X_\b)\ |_q+X_j\ov g(X_\a,X_\b)\ |_q\,x_j+O(|x|^2)\\
&=&\ov g(X_\a,X_\b)\ |_q+\ov g(\nabla_{X_j}X_\a,X_\b)\ |_q\,x_j+\ov g(\nabla_{X_j}X_\b,X_\a)\ |_q\,x_j+O(|x|^2).
\end{eqnarray*}
Since $\ov g(X_b,X_i)=0$ in a neighborhood of $q$ we have
\begin{eqnarray*}
0=X_b\ov g(X_i,X_a)&=&\ov g(\nabla_{X_b}X_i,X_a)+\ov g(X_i,\nabla_{X_b}X_a)\\
&=&\ov g(\nabla_{X_i}X_b,X_a)+\ov g(X_i,\nabla_{X_b}X_a).
\end{eqnarray*}
This implies in particular that
$$
\ov g(\nabla_{X_i}X_b,X_a)=-\ov g(X_i,\nabla_{X_b}X_a)=- \Gamma_{ai}^c \tilde g_{cb}.
$$
Then at first order expansion we have
\begin{eqnarray*}
\ov g_{ab}&=&\ov g(X_a,X_b)\ |_q+\ov g(\nabla_{X_j}X_a,X_b)\ |_q\,x_j+\ov g(\nabla_{X_j}X_b,X_a)\ |_q\,x_j+O(|x|^2)\\
&=&\tilde g_{ab}-\bigg(\Gamma_{ai}^c \tilde g_{cb}+\Gamma_{bi}^c \tilde g_{ca}\bigg)\,x_i+O(|x|^2).
\end{eqnarray*}
Similarly using Formula \eqref{eq:parall} we get
\begin{eqnarray*}
\ov g_{ai}&=&\ov g(X_a,X_i)\ |_q+\ov g(\nabla_{X_j}X_a,X_i)\ |_q\,x_j+\ov g(\nabla_{X_j}X_i,X_a)\ |_q\,x_j+O(|x|^2)\\
&=&O(|x|^2).
\end{eqnarray*}
On the other hand, since every vector field
$X \in N_{q} K$  is tangent to the geodesic  $s \longrightarrow
\exp_{q}^{\partial\Omega} (s X)$, we have
\[
\left. \nabla_{X_\ell+X_j}(X_\ell + X_j) \right|_q=0.
\]
Which clearly implies that
\[
\left. (\nabla_{X_\ell}X_j + \nabla_{X_j}X_\ell) \right|_q = 0.
\]
Then the following expansion holds
\begin{eqnarray*}
\ov g_{ij}&=&\ov g(X_i,X_j)\ |_q+\ov g(\nabla_{X_l}X_i,X_j)\ |_q\,x_l+\ov g(\nabla_{X_l}X_j,X_i)\ |_q\,x_l+O(|x|^2)\\
&=&\delta_{ij}+O(|x|^2).
\end{eqnarray*}

To compute the terms of order two in the Taylor expansion it suffices to compute  $X_k \,
X_k \, \ov g_{\alpha\beta}$ at  $q$ and polarize  (i.e.\ replace $X_k$ by
$X_i + X_j$). We have
\begin{equation}
X_k \, X_k \, \ov g_{\alpha\beta} = \ov g (\nabla_{X_k}^2 X_\alpha,X_\beta)
+\ov g(  X_\alpha, \nabla_{X_k}^2 X_\beta) + 2 \,\ov g( \nabla_{X_k}
X_\alpha, \nabla_{X_k} X_\beta).  \label{eq:ll}
\end{equation}

Now, using the fact every normal vector
$X \in N_{q} K$ is tangent to the geodesic  $s \longrightarrow
\exp_{q}^{\partial\Omega} (s X)$, then
\[
\left. \nabla_{X} X \right|_{q}= \left. \nabla^2_X X \right|_{q}=0
\]
for every  $X \in N_{q} K$. In particular, choosing  $X = X_k + \e \, X_j$, we obtain
\[
0 = \nabla_{X_k + \e X_j}\nabla_{X_k + \e X_j}(X_k + \e X_j) \ _{|q}
\]
for every $\e$, which implies
$\nabla_{X_j}\nabla_{X_k}X_k \ _{|p} = -2
\nabla_{X_k}\nabla_{X_k}X_j \ _{|p}$, and hence
\[
\left. 3 \, \nabla_{X_k}^2 X_j \right|_q =  R(X_k,X_j) \, X_k|_q .
\]
We then deduce from   (\ref{eq:ll}) that
\[
\left. X_k \, X_k \, \ov g_{ij} \right|_q= \frac{2}{3} \, \ov g( R(X_k,
X_i) \, X_k , X_j)|_q.
\]
On the other hand we have
\[
\nabla_{X_k}^2 X_\gamma = \nabla_{X_k}\nabla_{X_\gamma }X_k =
\nabla_{X_\gamma} \nabla_{X_k}X_k + R(X_k,X_\gamma) \, X_k.
\]
Hence
\[
\begin{array}{rllll}
X_k \, X_k \, g_{ab}  & = & 2  \, \ov g(R(X_k, X_a) X_k, X_b) + 2 \, \ov g(
\nabla_{X_k} X_a , \nabla_{X_k} X_b )
\\[3mm]
&  +  & \ov g (\nabla_{X_a} \nabla_{X_k} X_k ,X_b) + \ov g( X_a ,
\nabla_{X_b } \nabla_{X_k} X_k).
\end{array}
\]
Now using the fact that
$\nabla_X X =0 \ _{|q}$ at   $q\in K$  for every  $X \in N_{q} K$,  the definition of $\Gamma_{ak}^c$ in \eqref{eq:Gab} and the formula
$$
R(X_k, X_a) X_l=R_{kal}^\gamma X_\gamma
$$
we deduce that at the point $q$
\begin{eqnarray*}
\left. X_k \, X_k \, \ov g_{ab} \right|_q &=& 2  \, \ov g(R(X_k, X_a) X_k,
X_b) + 2 \, \tilde g_{cd}\,\Gamma_{ak}^c \, \Gamma_{bk}^d\\
&=& 2R_{kak}^c \ov g(X_c,X_b)+ 2 \, \tilde g_{cd}\,\Gamma_{ak}^c \, \Gamma_{bk}^d\\
&=& 2R_{kak}^c \ \tilde g_{cb}+ 2 \, \tilde g_{cd}\,\Gamma_{ak}^c \, \Gamma_{bk}^d\\
&=& 2R_{kabk}+ 2 \, \tilde g_{cd}\,\Gamma_{ak}^c \, \Gamma_{bk}^d.
\end{eqnarray*}
This proves the Lemma.

\end{proof}

Next we introduce a parametrization of a neighborhood in $\O$ of $ q
\in \pa \O$ through the map $\U$ given by
\begin{equation}\label{eq:fe}
    \U(y, x) = F( y, \bar x) +
x_N \nu(y, \bar x), \qquad x = (\bar x,x_N) \in \R^{N-1} \times \R,
\end{equation}
where $F$ is the parametrization introduced in \equ{eqF} and
$\nu(y,\bar x)$ is the inner unit normal to $\partial \O$ at
$F(y, \bar x)$. We have
$$
\frac{\partial \U}{\partial y_a} = \frac{\partial F}{\partial
y_a}(y, \bar x) +  \zn \frac{\partial \nu}{\partial y_a}(y, \bar x);
\qquad \qquad \frac{\partial \U}{\partial x_i} = \frac{\partial
F}{\partial x_i}(y, \bar x) +\zn \frac{\partial \nu}{\partial
x_i}(y, \bar x).
$$
Let us define the tensor matrix ${H}$ to be given by
\begin{equation}\label{eq:dn}
    d \nu_x [v] =  -{H}(x)[v].
\end{equation}
We thus find
\begin{eqnarray}\label{eq:dfe1}
&&  \frac{\partial \U}{\partial y_a} = \left[ Id -  \zn
   {H}(y, \bar x) \right] \frac{\partial
F}{\partial y_a}(y, \bar x); \\
&& \frac{\partial \U}{\partial x_i} = \left[ Id - \zn
  {H}(y, \bar x) \right] \frac{\partial
F}{\partial x_i}(y, \bar x).
\end{eqnarray}
Differentiating $\U$ with respect to $\zn$ we also get
\begin{equation}\label{eq:dfe3}
  \frac{\partial \U}{\partial \zn} = \nu(y, \bar x).
\end{equation}
Hence, letting $g_{\a \b}$ be the coefficients of the flat metric
$g$ of $\R^{N+k}$ in the coordinates $(y, \bar x,\zn)$, with easy
computations we deduce for $\tilde{y} = (y, \bar x)$ that
\begin{equation}\label{eq:geij}
    g_{\a\b} (\tilde{y},\zn) = \ov{g}_{\a\b} ( \tilde{y})
  - \zn \left( H_{\a\d} \ov{g}_{\d \b} + H_{\b \d} \ov{g}_{\d \a}
  \right) ( \tilde{y}) +  x_N^2 H_{\a \d} H_{\s \b}
  \ov{g}_{\d \s} (\tilde{y});
\end{equation}
\begin{eqnarray}\label{eq:ge2233}
  g_{\a N} \equiv 0; \qquad \qquad g_{NN} \equiv 1.
\end{eqnarray}
In the above expressions, with $\alpha$ and $\beta$ we denote any
index of the form $a=1, \ldots , k$ or $i=1, \ldots , N-1$.

We first provide a Taylor expansion of the coefficients of the
metric $g $. {From} Lemma \ref{lemovg} and formula \eqref{eq:geij}
we have immediately the following result.

\begin{lemma}\label{l:expgeuz}
For the (Euclidean) metric $g$ in the above coordinates we have the
expansions
\begin{eqnarray*} &&g_{ij}=\delta_{ij} - 2  x_N H_{ij} +
\frac{1}{3} \,R_{istj}\,x_s\,x_t +  x_N^2 (H^2)_{ij}
\,+\,{\mathcal O}(|x|^3), \quad 1\le i,j\le N-1;\\[3mm]
 &&g_{aj}=-  x_N \bigg(H_{aj} + \tilde g_{ac}H_{cj}\bigg)
 +{\mathcal O}(|x|^2), \quad 1\le a\le k , \, 1\le j\le  N-1;\\[3mm]
 &&g_{ab}=\tilde{g}_{ab}-\big\{\tilde{g}_{ac}\,\Gamma_{bi}^c+\tilde{g}_{bc}\,\Gamma_{ai}^c\big\}\,x_i
 -x_N\,\big\{ H_{ac}\,\tilde g_{bc}+ H_{bc}\,\tilde g_{ac}\big\}+
\big[R_{sabl}+\tilde g_{cd}\Gamma_{as}^c\,
\Gamma_{dl}^b \big]x_s x_l
 \\
&&+x_N^2 (H^2)_{ab} +x_N\,x_k\bigg[H_{ac}\big\{\tilde g_{bf}\Gamma_{ck}^f+\tilde g_{cf}\Gamma_{bk}^f\big\}+
H_{bc}\big\{\tilde g_{af}\Gamma_{ck}^f+\tilde g_{cf}\Gamma_{ak}^f\big\}
\bigg]+ {\mathcal O}(|x|^3), \\
&&\qquad 1\le a,b\le k;
\\[3mm]
 && g_{a N} \equiv 0, \quad a=1, \ldots , k; \qquad g_{i N} \equiv 0, \quad i=1, \ldots , N-1; \qquad \qquad g_{NN} \equiv 1.
\end{eqnarray*}
In the above expressions $H_{\a  \b} $ denotes the components of the
matrix tensor ${H}$ defined in \equ{eq:dn}, $R_{istj}$ are the
components of the curvature tensor as defined in \equ{ctens},
$\Gamma_{a}^b(E_i)$ are defined in \equ{eq:Gab}. Here we have set
$$
(A^2)_{\a\b}=A_{\a i}A_{i\b}+\tilde g_{cd}A_{\a c}A_{\b d}.
$$

Furthermore, we have the validity of the following expansion for the
log of the determinant of $g$
\begin{eqnarray*}
 \log\big(\det g\big)& = & \log\big(\det \tilde g\big)- 2x_N {\rm tr }\,(H )-2\G^b_{bk}\,x_k+ \frac13  R_{miil}x_m x_l\\&+&
 \bigg( \tilde g^{ab}\,R_{mabl}-\G_{am}^{c}
\G_{cl}^{a}  \bigg)x_m x_l -x_N^2 \,{\rm tr }\,(H^2)+ \mathcal{O}(|x|^3).
\end{eqnarray*}
\end{lemma}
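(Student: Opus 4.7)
The plan is purely computational: I would derive the stated expansions by substituting the Taylor expansions of $\ov g_{\a\b}$ from Lemma \ref{lemovg} into the relation \eqref{eq:geij}, and then treat $\log\det g$ via a standard block-matrix expansion.

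First I would fix notation: write \eqref{eq:geij} as
\[
g_{\a\b}(\tilde y,x_N)=\ov g_{\a\b}(\tilde y)-x_N\bigl(H_{\a\d}\,\ov g_{\d\b}+H_{\b\d}\,\ov g_{\d\a}\bigr)(\tilde y)+x_N^{2}H_{\a\d}H_{\s\b}\ov g_{\d\s}(\tilde y),
\]
and recall that at $\bar x=0$ one has $\ov g_{ab}=\tilde g_{ab}$, $\ov g_{ij}=\d_{ij}$, $\ov g_{aj}=0$. Plugging in Lemma \ref{lemovg}'s expansions, for the $(i,j)$-block the correction $H_{i\d}\ov g_{\d j}$ equals $H_{ij}$ modulo $O(|\bar x|)$ terms which contribute only at order $O(|x|^3)$ when multiplied by $x_N$; the $x_N^2$ term yields $H_{i\d}H_{\s j}\ov g_{\d\s}=H_{ik}H_{kj}+\tilde g_{cd}H_{ic}H_{jd}=(H^2)_{ij}$ in the notation defined in the statement. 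Combined with the $\tfrac13 R_{istj}x_s x_t$ coming from $\ov g_{ij}$, this delivers the first expansion.

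The mixed block $g_{aj}$ is analogous: $H_{a\d}\ov g_{\d j}$ gives $H_{aj}$ at leading order in $\bar x$ while $H_{j\d}\ov g_{\d a}$ gives $\tilde g_{ac}H_{cj}$ (using $\ov g_{\d a}=\tilde g_{ca}\d_{\d c}$ at $\bar x=0$), which combine into the stated $-x_N(H_{aj}+\tilde g_{ac}H_{cj})$, with remainder $O(|x|^2)$ since $\ov g_{aj}=O(|\bar x|^2)$ and the $x_N^2$ contribution is absorbed. The tangent--tangent block $g_{ab}$ is the most tedious: the quadratic-in-$\bar x$ terms from $\ov g_{ab}$ in Lemma \ref{lemovg} reproduce the $[R_{sabl}+\tilde g_{cd}\G_{as}^c\G_{bl}^d]x_sx_l$ contribution; the $-x_N(H_{ac}\tilde g_{bc}+H_{bc}\tilde g_{ac})$ piece comes from the linear-in-$x_N$ correction evaluated at $\bar x=0$; the mixed $x_N x_k$ terms arise by taking the linear-in-$\bar x$ correction of $\ov g_{ab}$ (the $\G$-terms) and multiplying by the linear-in-$x_N$ correction $-x_N(H_{\a\d}\ov g_{\d\b}+H_{\b\d}\ov g_{\d\a})$, yielding precisely the bracketed expression in the statement; finally the $x_N^2(H^2)_{ab}$ piece comes from the quadratic-in-$x_N$ correction, again using the definition of $A^2$.

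For the $\log\det$ expansion I would write $g=g_{(0)}+E$ where $g_{(0)}=\mathrm{diag}(\tilde g_{ab},\d_{ij},1)$ and $E$ collects all the corrections computed above, then use
\[
\log\det g=\log\det g_{(0)}+\mathrm{tr}\bigl(g_{(0)}^{-1}E\bigr)-\tfrac12 \mathrm{tr}\bigl((g_{(0)}^{-1}E)^2\bigr)+O(|x|^3).
\]
The trace of the linear part splits into $\tilde g^{ab}$ contracted against the $x_i$ and $x_N$ corrections in $g_{ab}$ plus $\d^{ij}$ contracted against those in $g_{ij}$, producing the terms $-2x_N\,\mathrm{tr}(H)$ and $-2\G^b_{bk}x_k$. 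The quadratic piece contributes the curvature terms $\tfrac13 R_{miil}x_mx_l$ and $\tilde g^{ab}R_{mabl}x_mx_l$, the Christoffel-squared term $-\G^c_{am}\G^a_{cl}x_mx_l$, and $-x_N^2\mathrm{tr}(H^2)$; here the mixed $x_Nx_k$ contributions cancel against the $\tfrac12\mathrm{tr}((g_{(0)}^{-1}E)^2)$ correction (cross-products between the $-2x_N H$ piece and the $-\Gamma x_i$ piece), which I would verify by a direct pairing.

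The main bookkeeping obstacle will be keeping track of which Greek index runs over tangential $(a,b)$ versus normal-to-$K$ $(i,j)$ directions when raising and lowering with the two different metric blocks, and confirming that all the $x_Nx_k$ cross terms in $\log\det g$ indeed collapse to zero (so that they appear only in $g_{ab}$ itself). Once those cancellations are checked, everything else is a routine order-by-order matching.
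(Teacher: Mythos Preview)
Your proposal is correct and follows essentially the same approach as the paper: the paper's argument for the metric expansions is simply to substitute the results of Lemma~\ref{lemovg} into formula~\eqref{eq:geij}--\eqref{eq:ge2233}, and for $\log\det g$ to write $g=G+M$ with $G=\mathrm{diag}(\tilde g,\mathrm{Id})$, $M=O(|x|)$, and use the standard expansion $\log\det g=\log\det G+\mathrm{tr}(G^{-1}M)-\tfrac12\,\mathrm{tr}\bigl((G^{-1}M)^2\bigr)+O(\|M\|^3)$. Your write-up is more detailed than the paper's terse indication, but the strategy, the block decomposition, and the bookkeeping you outline (including the check that the $x_N x_k$ cross terms drop out of $\log\det g$) are exactly what the paper intends.
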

Recall first that $K$ minimal implies that $\G^b_{bk}=0$.  The expansions of the  metric in the above lemma follow from Lemma \ref{lemovg} and formulas \eqref{eq:geij}-\eqref{eq:ge2233} while the expansion of the log of the determinant of $g$ follows from the fact that  $g=G+M$ with
$$
G=\bigg(\begin{matrix}
  \tilde g & 0 \\
  0& Id_{\R^n}
 \end{matrix}\bigg)\qquad \hbox{and }\quad M =\mathcal{O}(|x|),
$$
then we have the following expansion
$$
\log\big(\det g\big)=\log\big(\det G\big)+{\rm tr}(G^{-1}M)-\frac12 {\rm tr}\bigg((G^{-1}M)^2\bigg)+O(||M||^3).
$$

We are now in position to give the expansion of the Laplace-beltrami operator. Recall that
$$
\Delta_g u= \frac1{\sqrt{\det g}}\,\partial_\a\bigg( \sqrt{\det g} \,g^{\a\b}\, \partial_\b u\bigg)
$$
where summation over repeated indices is understood and where $g^{\a\b}$ denotes the entries of the inverse of the metric $(g_{\a\b})$.
The above formula can be rewritten as
\begin{eqnarray*}
\Delta_g u=g^{\a\b}\, \partial^2_{\a\b} u+\partial_\a\big(g^{\a\b}\big)\, \partial_\b u+\frac12\,\partial_\a\big( \log(\det g)\big) \,g^{\a\b}\, \partial_\b u.
\end{eqnarray*}
Using the expansions in Lemma \ref{l:expgeuz} we have the validity of the following expansion for the Laplace-Beltrami operator
\begin{lemma}\label{l:explaplacian}
In the above coordinates the Laplace-Beltrami operator can be expanded as
\begin{eqnarray*}\label{explap}
\Delta_g u(x,y)&=& \D_K u+\partial^2_{ii}u+\partial^2_{NN}u-tr(H)\partial_Nu+2x_N H_{ij}\,\partial^2_{ij}u\\[3mm]
&+&x_N^2 \,Q(H)_{ij}\partial^2_{ij}u-x_N\, tr(H^2)\,\partial_Nu+2x_N H_{ab}\G_{ai}^b\partial_{i}u\\[3mm]
&+& \bigg(\frac23  R_{mlli}+ \tilde g^{ab}\,R_{iabm}-\G_{am}^{c}
\G_{ci}^{a}  \bigg)x_m \,\partial_{i}u-\frac13 \,R_{islj}x_sx_l\,\partial^2_{ij}u\\[3mm]
&+& 2x_N\,\big( H_{aj} +\tilde g^{ac}\, H_{cj}\big)\partial^2_{aj}u +\big(O(|x|^2)+O(|x|)\,\partial_a \mathfrak{F}^{a\b}(y,x) \big)\,
 \partial_\b u\\[3mm]
&+&  \bigg\{\tilde{g}^{ac}\,\Gamma_{bi}^c+\tilde{g}^{bc}\,\Gamma_{ai}^c\bigg\}\,x_i\,\partial^2_{ab}u
 +x_N\,\bigg\{ H_{ac}\,\tilde g^{bc}+ H_{bc}\,\tilde g^{ac}\bigg\}\,\partial^2_{ab}u \\[3mm]
&+& {\mathcal O}(|x|^3)\,\partial^2_{ij}u+{\mathcal O}(|x|^2)\,\partial^2_{aj}u+{\mathcal O}(|x|^2)\,\partial^2_{ab}u.
\end{eqnarray*}
Here the term $Q(H)$ is a quadratic term of $H$ given by
\begin{equation}\label{eq:Qij}
 Q(H)_{ij}=3x_N^2 \,H_{ik}\,H_{kj}+x_N^2\,\bigg( 2\,H_{ia}\,H_{aj}+\tilde g^{ab}\,H_{ia}\,H_{bj} \bigg),
\end{equation}
while the term $\mathfrak{F}^{a\b}$ ($\b=b$ or $\b=j$) is given by the formulas
$$
{\mathcal O}(|x|)\,\mathfrak{F}^{ab}(y,x) =\bigg(g^{ab}-\tilde g^{ab}\bigg)+\frac12\bigg( \log\big(\det g\big)\,g^{ab}-\log\big(\det \tilde g\big)\,\tilde g^{ab} \bigg)
$$
and
$$
{\mathcal O}(|x|)\,\mathfrak{F}^{aj}(y,x) =g^{aj}+\frac12\, \log\big(\det \tilde g\big)\,g^{aj}.
$$
\end{lemma}

\begin{proof}
Using the expansion of Lemma \ref{l:expgeuz} and the fact that if $g=G+M$ with
$$G=\bigg(\begin{matrix}
  \tilde g & 0 \\
  0& Id_{\R^N}
 \end{matrix}\bigg)\qquad \hbox{and }\quad M=\mathcal{O}(|x|)$$
 then
 $$
 g^{-1}=G^{-1}-G^{-1}MG^{-1}+G^{-1}MG^{-1}MG^{-1}+O(||M||^3),
 $$
  it is easy to check that the following expansions hold true
\begin{eqnarray*} &&g^{ij}=\delta_{ij} +2  x_N H_{ij} -
\frac{1}{3} \,R_{istj}\,x_s\,x_t +  x_N^2 Q(H)_{ij}
\,+\,{\mathcal O}(|x|^3), \quad 1\le i,j\le N-1;\\[3mm]
 &&g^{aj}= x_N \bigg(H_{aj} + \tilde g^{ac}H_{cj}\bigg)+{\mathcal O}(|x|^2), 1\le a\le k , \, 1\le j\le  N-1;\\[3mm]
 &&g^{ab}=\tilde{g}^{ab}+\bigg\{\tilde{g}^{ac}\,\Gamma_{bi}^c+\tilde{g}^{bc}\,\Gamma_{ai}^c\bigg\}\,x_i
 +x_N\,\bigg\{ H_{ac}\,\tilde g^{bc}+ H_{bc}\,\tilde g^{ac}\bigg\}+ {\mathcal O}(|x|^2), \\
 &&\qquad 1\le a,b\le k;
\\[3mm]
 && g^{a N} \equiv 0, \quad a=1, \ldots , k; \qquad g^{i N} \equiv 0, \quad i=1, \ldots , N-1; \qquad \qquad g^{NN} \equiv 1.
\end{eqnarray*}
The proof of the Lemma follows at once.
\end{proof}

\subsection{Non degenerate Minimal submanifold}

\noindent Denoting by $C^\infty (NK)$ the space of smooth normal
vector fields on $K$. Then, for $\Phi \in C^\infty (NK)$, we define
the one-parameter family of submanifolds $t \mapsto K_{t,\Phi}$ by
\begin{equation}
\label{eqKtPhi} K_{t,\Phi}:=\{ \exp_{y}^{\pa \O} (t \Phi( y)) \,:\,
 y\in K \}.
\end{equation}
The  first variation formula of the volume is the equation~
\begin{equation}
\label{eqfvf} \frac{d}{dt}\bigg|_{t=0}\mbox{Vol}(K_{t,\Phi})=\int_K
\langle\Phi, {\bf h}\rangle_N\,dV_K,
\end{equation}
where   ${\bf h}$ stands for the {\em mean curvature} (vector) of
$K$ in $\pa \O$, $\langle \cdot, \cdot \rangle_N$ denotes the
restriction the metric $\ov{g}$ to $N K$, and $dV_K$ the volume
element of $K$.

\

\noindent A submanifold $K$ is said to be  {\em minimal} if it is a
critical point for the volume functional, namely if~
\begin{equation}
\label{eqms} \frac{d}{dt}\bigg|_{t=0}\mbox{Vol}(K_{t,\Phi})=0 \qquad
\mbox{for any $\Phi\in C^\infty (NK)$}
\end{equation}
or, equivalently by \eqref{eqfvf}, if the mean curvature ${\bf h}$
is identically zero on $K$. It is possible to prove that condition \equ{eqms} is equivalent to \equ{eq:min}.

\noindent The {\em Jacobi operator} $\mathfrak{J}$ appears in the
expression of the second variation of the volume functional for a
minimal submanifold $K$
\begin{equation}
\label{eqjac}
\frac{d^2}{dt^2}\bigg|_{t=0}\mbox{Vol}(K_{t,\Phi})=-\int_K
\langle{\mathfrak J}\Phi,\Phi\rangle_N\,dV_K; \qquad \quad \Phi\in
C^\infty (NK),
\end{equation}
and it is given by~
\begin{equation}\label{eq:jacobi}
    {\mathfrak
J}\Phi:=-\Delta_K^N\Phi+\mathfrak{R}^N\Phi-\mathfrak{B}^N\Phi,
\end{equation}
where $\mathfrak{R}^N, \mathfrak{B}^N :NK\rightarrow NK$ are defined
as~
$$
\mathfrak{R}^N\Phi=(R(E_a,\Phi)E_a)^N; \qquad \qquad \ov
g(\mathfrak{B}^N\Phi,n_K):=\G_b^a(\Phi)\G_a^b(n_K),
$$
for any unit normal vector $n_K$ to $K$.
The Jacobi operator defined in \equ{eq:jacobi} expressed in Fermi coordinates take the expression
\begin{equation}
\label{Jacobinodeg} ({\mathfrak J}\Phi)^l =  - \Delta_K
\,\Phi^l+\bigg( \tilde g^{ab}\,R_{mabl}- \G_a^c(E_m) \G_c^a(E_l)  \bigg) \,\Phi^m ,
\quad l=1, \ldots , N-1
\end{equation}
where $R_{maal} $ and $\G_a^c (E_m)$ are smooth functions on $K$ and they are defined respectively in \equ{ctens} and \equ{eq:Gab}.
\noindent A submanifold $K$ is said to be {\em non-degenerate} if
the Jacobi operator ${\mathfrak J}$ is invertible, or equivalently
if the equation ${\mathfrak J}\Phi=0$ has only the trivial solution
among the sections in $N K$.
\setcounter{equation}{0}

\section{Expressing the equation in coordinates}

We recall from \equ{eq:pe} that we want to find a solution to  the problem
\begin{equation}\label{eq:pe1}
  \begin{cases}
    \Delta v -\ve v +v_+^{\frac{N+2}{N-2}} =0 & \text{ in } \O_\ve, \\
    \frac{\del v}{\pa \nu} = 0 & \text{ on } \partial \O_\ve.
  \end{cases}
\end{equation}

The first element to construct an approximate solution to our
problem is the standard bubble
\begin{equation}
\label{w00} w_0 (\xi ) = {\alpha_N \over (1+|\xi|^2)^{N-2 \over 2} }, \quad \alpha_N= (N(N-2))^{\frac{N-2}4}
\quad \foral \quad \xi \in \R^N
\end{equation}
solution to
\begin{equation}
\label{w0} \Delta w + w^{N+2 \over N-2} = 0 \quad {\mbox {in}} \quad
\R^N_+, \quad {\partial w \over \partial \xi_N} = 0
 \quad {\mbox {in}} \quad \partial \R^N_+.
\end{equation}
It is well known that all positive and bounded solutions to \equ{w0}
are given by the family of functions
$$
\mu^{-{N-2 \over 2}} w_0 \left( {x-P \over \mu} \right),
$$
for any $\mu>0$ and any point $P=(P_1, \ldots , P_{N-1} , 0) \in
\partial \R^N_+$. The solution we are building will have at main
order the shape of a copy of $w_0$, centered and translated along
the $k$-dimensional manifold $K$ inside $\partial \Omega$. In the original variables in $\Omega$, this
approximation will be scaled by a small factor, so that it will turn out to be very much
concentrated around the manifold $K$.

\medskip
To describe this approximation, it will be useful to introduce the
following change of variables. Let $(y,x)\in \R^{k+N}$ be the local
coordinates along $K$ introduced in \equ{eq:fe}. Let $z={y\over \ve}
\in K_\ve$ and $X={x\over \ve} \in \R^N$. A parametrization of a
neighborhood (in $\O_\e$) of ${q\over \e} \in  K_\e \subset \partial
\Omega_\e$ close to $K_\e$ is given by the map $\Upsilon_\e$ defined
by
\begin{equation} \label{eq:feee}
\Upsilon_\e (z, \bar X , X_N ) = {1\over \e} \Upsilon (\e z, \e X
),\quad \, X=(\bar X,X_N)\in \R^{N-1}\times \R^+
\end{equation}
where $\Upsilon$ is the parametrization given in \equ{eq:fe}.

Given a positive smooth function $\mu_\ve
= \mu_\ve (y) $  defined on $K$  and a smooth function $\Phi_\e\,:K\longrightarrow\, \R^{N-1}$ defined by $
  \Phi_\e (y)=( \Phi_\e^1(y) , \ldots , \Phi_\e^{N-1} (y) ) $, $ y\in\,K,
$ we consider the change of variables
\begin{equation} \label{b}
  v(z,\ov X, X_N)=\mu_{\varepsilon}^{-\frac{N-2}{2}}(\ve z) \, W(z,\mu_{\varepsilon}^{-1}(\ve z)(\ov X-\Phi_\e (\ve z)),\,\mu_{\varepsilon}^{-1}(\ve z)X_N ),
\end{equation}
with the new $W$ being a function
\begin{equation}
\label{b0}
  W=W(z,\xi), \quad z= {y \over \ve}, \quad \ov\xi=\frac{\ov X-\Phi_\e}{\mu_\ve}, \quad \xi_N=\frac{X_N}{\mu_\ve}.
\end{equation}
To emphasize the dependence of the above change of variables on
$\mu_\ve$ and $\Phi_\e$, we will use the notation
\begin{equation} \label{defTT}
v={\mathcal T}_{\mu_\ve , \Phi_\e} (W)  \quad \Longleftrightarrow  v
\quad {\mbox {and}} \quad W \quad {\mbox {satisfy \equ{b}}}.
\end{equation}

We assume now that the functions $\mu_\e$ and
$\Phi_\e$ are uniformly bounded, as $\ve \to 0$, on $K$. Since the original variables
$(y,x)\in \R^{k+N}$ are local coordinates along $K$, we let the
variables $(z, \xi )$ vary in the set $\DD $ defined by
\begin{equation}
\label{defD} \DD  = \{ (z, \bar \xi , \xi_N ) \, : \, \ve z \in K,
\quad |\bar \xi | <{\delta \over \ve }, \quad 0< \xi_N < {\delta
\over \ve } \}
\end{equation}
for some  fixed positive number $\delta$. We will also use
the notation $ \DD  = K_\ve \times \hat \DD $, where $K_\ve = {K\over
\ve}$ and
$$\hat \DD  = \{ (\bar \xi , \xi_N ) \, : \, |\bar \xi | < {\delta \over \ve }, \quad 0< \xi_N < {\delta \over \ve } \}.
 $$

Having the expansion of the metric coefficients obtained in Section
\ref{sec4}, we easily get the expansion of the metric in the expanded variables: letting
$g_{\alpha , \beta}^\e$ be the coefficients of the
metric $g^\e$, we have
$$
g_{\alpha , \beta}^\e (z, x )= g_{\alpha , \beta} (\e z , \e x)
$$
where $g_{\alpha , \beta}$ are given in Lemma \ref{l:expgeuz}.
With an easy computation we deduce the following
\begin{lemma} \label{metricepsilon}
For the (Euclidean) metric $g^\e$ in the above coordinates $(z,X )$
we have the expansions
\begin{eqnarray*} &&g^\e_{ij}=\delta_{ij} - 2 \e X_N H_{ij} +
\frac{\e^2}{3} \,R_{istj}\,X_s\,X_t +  \e^2 X_N^2 (H^2)_{ij}
\,+\,{\mathcal O}(\e^3(|X|^3), \\
&&\quad 1\le i,j\le N-1;\\[3mm]
 &&g_{aj}^\e=-  \e X_N \bigg(H_{aj} + \tilde g^\e_{ac}H_{cj}\bigg)+{\mathcal O}(\e^2|X|^2)\\
 &&\quad 1\le a\le k , \, 1\le j\le  N-1;\\[3mm]
 &&g_{ab}^\e=\tilde{g}^\e_{ab}-\e\bigg\{\tilde{g}^\e_{ac}\,\Gamma_{bi}^c+\tilde{g}^\e_{bc}\,\Gamma_{ai}^c\bigg\}\,X_i
 -\e X_N\,\bigg\{ H_{ac}\,\tilde g^\e_{bc}+ H_{bc}\,\tilde g^\e_{ac}\bigg\}\\&&\qquad+
\e^2\,\bigg[R_{sabl}+\tilde g^\e_{cd}\Gamma_{as}^c\,
\Gamma_{dl}^b \bigg]X_s X_l+
 \e^2X_N^2 (H^2)_{ab}\\
&&\qquad  +\e^2X_N\,X_k\bigg[H_{ac}\big\{\tilde g^\e_{bf}\Gamma_{ck}^f+\tilde g^\e_{cf}\Gamma_{bk}^f\big\}+
H_{bc}\big\{\tilde g^\e_{af}\Gamma_{ck}^f+\tilde g^\e_{cf}\Gamma_{ak}^f\big\}
\bigg]+ {\mathcal O}(\e^3|X|^3), \\
&&\quad 1\le a,b\le k;
\\[3mm]
 && g_{a N}^\e \equiv 0, \quad a=1, \ldots , k; \qquad g_{i N}^\e \equiv 0, \quad i=1, \ldots , N-1; \qquad \qquad g^\e_{NN} \equiv 1.
\end{eqnarray*}
In the above expressions $H_{\a  \b} $ denotes the components of the
matrix tensor ${H}$ defined in \equ{eq:dn}, $R_{istj}$ are the
components of the curvature tensor as defined in \equ{ctens},
$\Gamma_{ai}^b$ are defined in \equ{eq:Gab} and $\tilde g_{ab}^\e (z)= \tilde g_{ab} (\e z)$.
\end{lemma}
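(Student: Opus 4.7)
The plan is to derive this lemma as an immediate consequence of Lemma \ref{l:expgeuz} via the change of variables $(y,x)\mapsto(z,X)=(y/\varepsilon,\,x/\varepsilon)$. Since $\Upsilon_\varepsilon(z,\bar X,X_N)=\varepsilon^{-1}F(\varepsilon z,\varepsilon X)$ and the Euclidean metric $g$ is invariant under the scaling $\varepsilon$ up to the overall factor $\varepsilon^2$ that we absorb when passing from $g$ to $g^\varepsilon$, the coefficients satisfy the identity
\[
g^\varepsilon_{\alpha\beta}(z,X)\;=\;g_{\alpha\beta}(\varepsilon z,\varepsilon X),
\]
with the tangential block $\tilde g^\varepsilon_{ab}(z)=\tilde g_{ab}(\varepsilon z)$ defined analogously. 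The entire content of the lemma then follows by substitution of $(y,x)=(\varepsilon z,\varepsilon X)$ into the expansions already established.

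First I would expand $g_{ij}$: substituting $x_s=\varepsilon X_s$ and $x_N=\varepsilon X_N$ into the formula of Lemma \ref{l:expgeuz}, the linear term $-2x_N H_{ij}$ becomes $-2\varepsilon X_N H_{ij}$, the quadratic terms $\frac13 R_{istj}x_sx_t$ and $x_N^2(H^2)_{ij}$ pick up a factor $\varepsilon^2$, and the cubic remainder becomes $\mathcal O(\varepsilon^3|X|^3)$. The curvature and second-fundamental-form coefficients $H_{\alpha\beta}$, $R_{istj}$ are evaluated at $f(\varepsilon z)\in K$; we leave them unexpanded (they are smooth in $y$ and bounded on the compact manifold $K$, so this does not alter the error structure). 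Next I would treat $g_{aj}$ in exactly the same way, producing the leading $-\varepsilon X_N(H_{aj}+\tilde g^\varepsilon_{ac}H_{cj})$ plus $\mathcal O(\varepsilon^2|X|^2)$.

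The tangential block $g_{ab}$ requires only slightly more bookkeeping: the zeroth-order piece becomes $\tilde g^\varepsilon_{ab}=\tilde g_{ab}(\varepsilon z)$; the linear-in-$x$ pieces $\{\tilde g_{ac}\Gamma^c_{bi}+\tilde g_{bc}\Gamma^c_{ai}\}x_i$ and $x_N\{\cdots\}$ each acquire one power of $\varepsilon$; and the quadratic curvature/$\Gamma$-product terms as well as the $x_N^2(H^2)_{ab}$ and mixed $x_Nx_k$ terms each acquire the factor $\varepsilon^2$, with remainder $\mathcal O(\varepsilon^3|X|^3)$. The three identities $g^\varepsilon_{aN}\equiv 0$, $g^\varepsilon_{iN}\equiv 0$, $g^\varepsilon_{NN}\equiv 1$ are inherited directly from \eqref{eq:ge2233}.

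There is no real obstacle: this lemma is a purely mechanical scaling translation of Lemma \ref{l:expgeuz}, and the only thing to be careful about is tracking the powers of $\varepsilon$ that multiply each occurrence of an $X$-coordinate (one power per $X$) while keeping the geometric coefficients $H_{\alpha\beta}$, $R_{istj}$, $\Gamma_{ai}^b$ evaluated at points of $K$ (independent of $\varepsilon$ as tensor fields on $K$). Collecting the resulting terms in decreasing order of $\varepsilon^{-j}$ inside the respective $\mathcal O(\varepsilon^j|X|^j)$ groupings yields exactly the expansions stated.
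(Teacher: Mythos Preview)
Your proposal is correct and follows exactly the paper's approach: the paper states just before this lemma that $g^\e_{\alpha\beta}(z,X)=g_{\alpha\beta}(\e z,\e X)$ and then deduces the result ``with an easy computation'' by substituting into Lemma~\ref{l:expgeuz}, which is precisely what you do.
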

\begin{lemma}\label{lemlogdet} We have the validity of the following expansions for the
square root of the determinant of $g^\e$ and the log of determinant of $g^\e$
\begin{eqnarray}
 \sqrt{\det g^\e}& = & \sqrt{\det \tilde g^\e} \bigg \{1- \e X_N tr (H )+ \frac{\e^2}6  R_{miil}  X_m X_l + \frac{\e^2}2
 \bigg( (\tilde {g}^\e)^{ab}\,R_{mabl}-\G_{am}^c
\G_{cl}^a  \bigg) X_m   X_l \nonumber \\[3mm]
& + & \frac{\e^2}2 X_N^2  {tr (H)}^2 -\e^2 X_N^2 \,tr(H^2)\bigg\}+ \ve^3 \mathcal{O}(|X|^3), \label{expdeterminante}
\end{eqnarray}
and
\begin{eqnarray*}
 \log\big(\det g^\e\big)& = & \log\big(\det \tilde g^\e\big)- 2\e X_N {\rm tr }\,(H )+ \frac{\e^2}3  R_{miil}\,X_m X_l\\&+&
 \e^2\bigg((\tilde {g}^\e)^{ab}\, R_{mabl}-\G_{am}^{c}
\G_{cl}^{a}  \bigg)X_m X_l -\e^2 X_N^2 \,{\rm tr }\,(H^2)+ \mathcal{O}(\e^3|X|^3).
\end{eqnarray*}
\end{lemma}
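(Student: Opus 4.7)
The statement to be proved is essentially a scaled and exponentiated version of the $\log(\det g)$ expansion already established in Lemma \ref{l:expgeuz}, so the plan is to combine two elementary manipulations: the change of variables $x = \varepsilon X$, and the Taylor expansion of $u \mapsto e^{u/2}$.

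First I would derive the expansion of $\log(\det g^\varepsilon)$. By definition $g^\varepsilon_{\alpha\beta}(z,X) = g_{\alpha\beta}(\varepsilon z, \varepsilon X)$, so $\log(\det g^\varepsilon)$ is obtained from the formula for $\log(\det g)$ in Lemma \ref{l:expgeuz} by the substitution $x \mapsto \varepsilon X$. Each monomial of degree $m$ in $x$ becomes $\varepsilon^m$ times the corresponding monomial in $X$, and the remainder $\mathcal O(|x|^3)$ turns into $\mathcal O(\varepsilon^3 |X|^3)$. The key simplification is that the linear term $-2\Gamma^b_{bk} x_k$ present in the unscaled expansion vanishes identically, because $K$ is minimal and this is exactly the characterization recalled in \eqref{eq:min}: $\sum_b \Gamma^b_{bk} = 0$ for every $k$. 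After this cancellation, only the $\varepsilon^0$, $\varepsilon$, $\varepsilon^2$ and remainder terms displayed in the statement survive.

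Next I would pass from $\log(\det g^\varepsilon)$ to $\sqrt{\det g^\varepsilon}$ by writing
\[
\sqrt{\det g^\varepsilon} \;=\; \sqrt{\det \tilde g^\varepsilon}\;\exp\!\Big( \tfrac{1}{2}\,[\,\log(\det g^\varepsilon)-\log(\det \tilde g^\varepsilon)\,]\Big)
\]
and Taylor expanding the exponential to order $\varepsilon^2$. Setting $u = \tfrac{1}{2}[\log\det g^\varepsilon - \log\det\tilde g^\varepsilon]$, by the first step we have $u = -\varepsilon X_N \mathrm{tr}(H) + \varepsilon^2 P(X) + \mathcal O(\varepsilon^3|X|^3)$, where $P(X)$ is half of the sum of the $\varepsilon^2$ terms in $\log\det g^\varepsilon$. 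Using $e^u = 1 + u + \tfrac12 u^2 + \mathcal O(u^3)$, the $\varepsilon^1$ coefficient is read off directly from the linear part of $u$, while at order $\varepsilon^2$ two contributions must be combined: the linear piece $\tfrac12 \cdot 2\varepsilon^2 P(X)$ coming from $u$, plus $\tfrac12(-\varepsilon X_N\mathrm{tr}(H))^2 = \tfrac{\varepsilon^2}{2} X_N^2(\mathrm{tr}(H))^2$ coming from $\tfrac12 u^2$. Collecting these produces exactly the right-hand side of \eqref{expdeterminante}, modulo an $\mathcal O(\varepsilon^3|X|^3)$ error absorbed into the remainder.

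The whole argument is conceptually routine; the only place one can go wrong is the bookkeeping of the $\varepsilon^2$ terms, in particular remembering the $(\mathrm{tr}(H))^2$ contribution generated by squaring the $\varepsilon$-piece of $u$, and being consistent with the factor $\tfrac12$ coming from $\sqrt{\phantom{x}}\!=e^{(1/2)\log}$. Since no invertibility, no elliptic estimate, and no delicate cancellation beyond minimality is invoked, I would not expect any genuine obstacle: both assertions follow by direct substitution and a short Taylor expansion from the ingredients already assembled in Lemmas \ref{lemovg} and \ref{l:expgeuz}.
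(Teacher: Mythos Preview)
Your proposal is correct and matches the paper's approach: the paper does not give a separate proof of Lemma~\ref{lemlogdet}, since it follows immediately from the unscaled expansion of $\log(\det g)$ in Lemma~\ref{l:expgeuz} (together with the remark just after it that $\Gamma^b_{bk}=0$ by minimality) via the substitution $x=\varepsilon X$, and then from $\sqrt{\det g^\varepsilon}=\sqrt{\det\tilde g^\varepsilon}\,\exp\big(\tfrac12[\log\det g^\varepsilon-\log\det\tilde g^\varepsilon]\big)$ expanded to second order. Your identification of the two bookkeeping points---the vanishing of the linear $\Gamma^b_{bk}X_k$ term by \eqref{eq:min} and the extra $\tfrac{\varepsilon^2}{2}X_N^2(\mathrm{tr}\,H)^2$ coming from $\tfrac12 u^2$---is exactly right.
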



We are now  in a position to expand the Laplace Beltrami
operator in the new variables $(z, \xi )$ in terms of the parameter
$\ve$, of the functions $\mu_\ve (y)$ and
$\Phi_\e (y)$.
This is the content of next Lemma, whose proof we
postpone to
 Section \ref{alaplacian}.

\begin{lemma} \label{scaledlaplacian}
Given  the change of variables defined  in \eqref{b}, the following
expansion for the Laplace Beltrami operator holds true
\begin{equation}
\label{lap1}
 \mu_\ve^{{N+2 \over 2}} \Delta v =  {\mathcal A}_{\mu_\ve , \Phi_\e } (W) :=  {\mu_\ve^2}  \D_{K_\e} W +
\Delta_{\xi} W + \sum_{\ell=0}^5 {\mathcal A}_\ell W + B(W).
\end{equation}
Above, the expression ${\mathcal A}_k$ denotes the following
differential operators
\begin{equation} \label{D0}
\begin{array}{rlllll}
{\mathcal A}_0 W
&= & \ve^2 \mu_\ve D_{\ov \xi}\, W \,[\D_K \Phi_\e ]  -
\ve^2\,\mu_\ve \,\D_K
 \mu_\ve \,\left( \gamma
W + D_\xi W \, [\xi] \right) \\[3mm]
 & + &   \ve^2\,| \nabla_K\mu_\ve|^2 \left[ D_{\xi\xi} W \, [
\xi]^2 +
2 (1+\gamma ) D_\xi W [\xi] + \gamma (1+ \gamma ) W \right]   \\[3mm]
& + &  \ve^2 \nabla_K \mu_\ve \,\cdot\,\left\{ 2D_{\ov\xi\,\ov\xi} W[\ov\xi] + N
D_{\ov\xi} W
\right\}\, [\nabla_K  \Phi_\ve ] + \ve^2 D_{\ov\xi\,\ov\xi} W \,[\nabla_K \Phi_\e]^2\\[3mm]
& - & 2\, \ve \mu_\ve \,\tilde g^{ab}\,\left[  D_\xi (\del_{\bar a} W ) [\del_b
\mu_\ve \xi] + D_{\ov \xi} (\del_{\bar a} W )[ \del_b
\Phi_\e ] +
 \gamma  \del_a\mu_\ve\, \del_{\bar b} W \right],
\end{array}
\end{equation}
where we have set $\gamma=\frac{N-2}{2}$,
\smallskip
\begin{equation} \label{D1}
\begin{array}{rlllll}
 {\mathcal A}_1 \, W & = & \,\sum\limits_{i,j} \bigg[ 2\mu_\ve \ve
H_{ij}  \xi_N  -{\ve^2\over 3}\,\,\sum\limits_{m,l} R_{mijl}
(\mu_\ve \xi_m + \Phi_\e^m )
(\mu_\ve \xi_l + \Phi_\e^l )  \\[3mm]
& + & \mu_\ve^2 \ve^2 \xi_N^2\,Q(H)_{ij}+\mu_\ve \ve^2
\xi_N\,\sum\limits_{l} \mathfrak{D}_{Nl}^{ij}\,( \mu_\ve \xi_l
+\Phi_\e^l )
  \bigg] \del^2_{ij} W,
\end{array}
\end{equation}
where the functions $\mathfrak{D}_{Nk}^{ij}$ are smooth functions of
the variable $z= \frac{y}{\ve}$ and uniformly bounded. Furthermore,
\smallskip
\begin{equation}
\label{D4} {\mathcal A}_2 W =\ve^2 \mu_\ve \sum\limits_j \bigg[ \sum_s
\frac23 R_{mssj}+\sum\limits_{m,a,b}
 \big( \tilde g_\e^{ab}\, R_{mabj}- \G_{am}^b
\G_{bj}^a \big)\bigg](\mu_\ve\xi_m+\Phi_\e^m)
 \del_j W
\end{equation}
and
\begin{equation}
\label{D5} {\mathcal A}_3 W \, = \bigg[ -\ve {\rm tr }(H)
-2\,\mu_\ve\, \ve^2 {\rm tr }(H^2)\,\xi_N-2\ve^2
\,\sum\limits_{i,a,b}(\mu_\ve
\xi_i+\Phi_\ve^i)H_{ab}\G_{bi}^a)\bigg]
 \mu_\ve \del_N W.
\end{equation}
Moreover
\smallskip
\begin{equation}
\label{D2} {\mathcal A}_4 W   =
4\,\ve\,\mu_\ve\xi_N\sum\limits_{a,j} H_{aj} \left(-\ve  D_y (\del_j
W)  [\del_a\Phi_\e] + \mu_\ve \del^2_{\bar aj} W - \ve
\del_a\mu_\ve(\gamma \del_j W + D_\xi(\del_j W)  [\xi] )\right)
\end{equation}
and
\smallskip
\begin{equation}
\label{D3}
\begin{array}{rlllll}
{\mathcal A}_5 W \, &  =  & \, \left( \sum\limits_{a,j}
\mathfrak{D}_j^a \ve^2 [\mu_\ve  \xi_j + \Phi_\e^j] + \ve^2\mu_\ve\,
\mathfrak{D}_N^a \,\xi_N  \right) \times \\[3mm]
& & \left\{ \mu_\ve \left[ - \ve D_{\ov\xi} W \, [\del_a \Phi_\e ] +
\mu_\ve \del_{\bar a} W - \ve \del_a \mu_\ve (\gamma W +
D_\xi W \, [\xi] ) \right] \right\}
\end{array}
\end{equation}
where $\mathfrak{D}_j^a$ and $\mathfrak{D}_N^a$ are smooth functions
of $z= \frac{y}{\ve}$. Finally,
the operator $B(v)$ is defined below, see \eqref{B}.

\smallskip
We recall that the symbols $\pa_{a}$, $\pa_{\ov a}$ and $\pa_i$
denote the derivatives with respect to $\pa_{y_a}$, $\pa_{z_a}$ and
$\pa_{\xi_i}$ respectively.

\end{lemma}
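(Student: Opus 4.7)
The plan is to combine the coordinate formula
$$\Delta_g v = g^{\a\b}\pa^2_{\a\b}v + \pa_\a(g^{\a\b})\pa_\b v + \tfrac12 g^{\a\b}\pa_\a(\log\det g)\pa_\b v$$
from Lemma~\ref{l:explaplacian} with the chain rule applied to the substitution \eqref{b}. Read in the scaled variables $(z,X)=(y/\ve,x/\ve)$, the coefficients $g^\ve_{\a\b}$ and the log--determinant are explicitly expanded in Lemmas~\ref{metricepsilon} and~\ref{lemlogdet}, so the only hand calculation is the chain rule itself. After multiplication by $\mu_\ve^{(N+2)/2}$ and using $\pa_{X_i}=\mu_\ve^{-1}\pa_{\xi_i}$, the flat part of $g^\ve_{\a\b}$ produces $\Delta_\xi W$ as the principal symbol, while the purely tangential piece $(\tilde g^\ve)^{ab}\pa_{z_a}\pa_{z_b}$ produces $\mu_\ve^2\D_{K_\ve}W$. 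Together these give the leading two terms of \eqref{lap1}.

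The decomposition $\sum_{\ell=0}^5\mathcal{A}_\ell$ is then organized by the geometric source of each correction in Lemma~\ref{metricepsilon}. The operator $\mathcal{A}_1$ collects the contributions of $-2\ve X_N H_{ij}$, $\tfrac{\ve^2}{3}R_{istj}X_sX_t$, $\ve^2X_N^2(H^2)_{ij}$ and the mixed term with coefficient $\mathfrak{D}^{ij}_{Nl}$ in the expansion of $g^\ve_{ij}$, each paired with $\pa^2_{ij}W$ and with $X$ rewritten as $\mu_\ve\xi+\Phi_\ve$. The first--order operators $\mathcal{A}_2$ and $\mathcal{A}_3$ come from $\tfrac12 g^{\a\b}\pa_\a(\log\det g^\ve)\pa_\b v$ using Lemma~\ref{lemlogdet}: the curvature quadratic block produces $\mathcal{A}_2$, while the $-2\ve X_N\,\mathrm{tr}(H)$ block produces $\mathcal{A}_3$ after pairing with $\pa_N$. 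The off--diagonal term $g^\ve_{aj}=-\ve X_N(H_{aj}+(\tilde g^\ve)^{ac}H_{cj})+O(\ve^2|X|^2)$ paired with $\pa^2_{aj}v$ yields $\mathcal{A}_4$, and the remaining off--diagonal tail, together with the tangential $\G^c_{ai}$ contributions in $g^\ve_{ab}$, is repackaged via the smooth symbols $\mathfrak{D}^a_j$, $\mathfrak{D}^a_N$ into $\mathcal{A}_5$. The minimality hypothesis $\sum_a\G^a_a(E_i)=0$ from \eqref{eq:min} is what removes the would--be $\ve$--order drift $-2\G^b_{bk}x_k$ in the log--determinant expansion.

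The cluster $\mathcal{A}_0$ has a different origin: it encodes all terms generated by the $\ve z$--dependence of $\mu_\ve$ and $\Phi_\ve$ inside the chain rule. Each differentiation $\pa_{z_a}$ hitting the composition in \eqref{b} brings down one factor $\ve\,\pa_a\mu_\ve$ or $\ve\,\pa_a\Phi_\ve^i$, which then multiplies a mixed $(\xi,z)$-derivative of $W$; applied through $(\tilde g^\ve)^{ab}\pa^2_{z_az_b}$ these combine into the $\D_K\mu_\ve$, $\D_K\Phi_\ve$ and $|\nn_K\mu_\ve|^2$ blocks of \eqref{D0}, and the weight $\gamma=(N-2)/2$ enters from the differentiation of the prefactor $\mu_\ve^{-\gamma}$ in \eqref{b}. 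The last line of \eqref{D0} arises from mixed terms $\pa_{z_a}\pa_{X_i}v$ paired with the first--order tangential piece of $g^\ve_{ab}$. Finally $B(W)$ is the reservoir for all $O(\ve^3|X|^3)$ remainder terms from Lemma~\ref{metricepsilon} paired with second derivatives of $W$.

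The main obstacle is computational rather than conceptual: the nested substitution $(z,X)\mapsto(z,\xi(z,X))$ together with the $\ve z$--dependence of $\mu_\ve$ and $\Phi_\ve$ produces a long list of cross terms in $\pa^2_{\a\b}v$, and each one has to be identified with precisely one entry among $\mathcal{A}_0,\ldots,\mathcal{A}_5$ after accounting for the overall prefactor $\mu_\ve^{(N+2)/2}$. A systematic way to carry this out is to first compute $\pa_\a v$ and $\pa^2_{\a\b}v$ symbolically in terms of $W$ and its derivatives, treating $\mu_\ve$ and $\Phi_\ve$ as abstract smooth sections on $K$; then pair each symbolic term with the matching entry in the expansion of $g^\ve_{\a\b}$ from Lemma~\ref{metricepsilon}; and finally regroup the results by their powers of $\ve$ and $X_N$ into the announced blocks.
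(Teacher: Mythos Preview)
Your approach is the paper's approach: expand $\Delta_{g^\ve}$ in the form of Lemma~\ref{l:explaplacian}, compute the chain-rule derivatives $\mu_\ve^{(N+2)/2}\pa^2_{\a\b}v$ in terms of $W$, pair with the metric and log-determinant expansions of Lemmas~\ref{metricepsilon}--\ref{lemlogdet}, and regroup. The strategy and the identification of the leading pieces $\Delta_\xi W+\mu_\ve^2\Delta_{K_\ve}W$ are correct.

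A few of your term attributions differ from how the paper actually organizes the computation and would cost you time if you followed them literally. First, the coefficient $\tfrac23 R_{mssj}$ in $\mathcal{A}_2$ is the sum of a $\tfrac13$ coming from $\pa_{X_j}\log\sqrt{\det g^\ve}$ and another $\tfrac13$ coming from $\pa_i((g^\ve)^{ij})\pa_j v$, so $\mathcal{A}_2$ is fed by both the log-determinant block and the $\pa_A(g^{AB})$ block, not the former alone. Second, $\mathcal{A}_5$ in the paper is assembled as $\mathcal{A}_{51}+\mathcal{A}_{52}$, where $\mathcal{A}_{51}$ comes from $\pa_{z_a}\log\sqrt{\det g^\ve}$ paired with the $z$-chain rule and $\mathcal{A}_{52}$ from $\pa_A((g^\ve)^{AB})\pa_B v$; it is not sourced by the $\Gamma^c_{ai}$ entries of $g^\ve_{ab}$. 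Third, the last line of \eqref{D0} is part of the pure $\pa^2_{z_az_b}v$ expansion (the paper writes the full second $z$-derivative as a block $\mathcal{A}_{ab}$ and sets $\mathcal{A}_0=\sum_a\mathcal{A}_{aa}$), not a mixed $\pa_{z_a}\pa_{X_i}v$ contribution. None of this is a conceptual gap; the computation just routes slightly differently than your sketch suggests.
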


\medskip
After performing the change of variables in
\eqref{b}, the original equation in $v$  reduces locally close to
$K_\ve = {K\over \ve}$ to the following equation in $W$
\begin{equation}
\label{adesso}
 -{\mathcal A}_{\mu_\ve , \Phi_\e } W +\ve \mu_\ve^2 W-  W^{p} =0 ,
\end{equation}
where ${\mathcal A}_{\mu_\ve , \Phi_\e }$ is defined in \eqref{lap1}
and $p={N+2 \over N-2}$. We denote by ${\mathcal S}_\ve$ the
operator given by \equ{adesso}, namely
\begin{equation}
\label{Sep} {\mathcal S}_\ve (v ) := -{\mathcal A}_{\mu_\ve ,
\Phi_\e } v +\e \mu_\ve^2v-  v^{p}.
\end{equation}
In order to study equation \equ{adesso} in the set $ (z,\xi )$, with
$z \in K_\ve$, $|\bar \xi |\leq {\delta \over \ve} $
 and $0< \xi_N \leq {\delta \over \ve}$,  we will first construct an approximate solution  to
\equ{adesso} in the whole space $K_\ve \times \R^{N-1} \times [0,
\infty)$ (see Section \ref{s:aprsol}). Then, by using proper cut off
functions, we will build a first approximation to \equ{adesso} in
the original region $z \in K_\ve$, $|\bar \xi |\leq {\delta \over
\ve} $
 and $0< \xi_N \leq {\delta \over \ve}$.

\medskip

The basic tool for this construction is a linear theory we describe
below.
\medskip

Let us recall the well known fact that, due to the invariance under
translations and  dilations of equation \equ{w0}, and since $w_0$ is
a non-degenerate solution for \equ{w0},
we have that the functions
 \begin{equation}
 \label{lezetas}
Z_j (\xi ) = {\partial w_0 \over \partial \xi_j} , \quad j=1, \ldots
, N-1\quad {\mbox {and}} \quad Z_0 (\xi ) = \xi \cdot \nabla w_0
(\xi )  + \frac {N-2}2 w_0 (\xi )
\end{equation}
are the only bounded solutions to the linearized equation around
$w_0$ of problem \equ{w0}
$$
-\Delta \phi - p w_0^{p-1} \phi =0 \quad {\mbox {in}} \quad \R^{N-1}
\times \R^+, \quad {\partial \phi \over \partial \xi_N} = 0 \quad
{\mbox {on }} \xi_N =0.
$$

Let us now consider a smooth function  $a:K \to \R$ with $a(y) \geq
\la >0$ for all $y \in K$ and a function $g:K  \times \R^{N-1}
\times \R^+ \to \R$ that depends smoothly on the variable $y \in K$.
Recall that a variable $z\in K_\ve$ has the form $\e z=y \in K$.

\medskip

We want to find a linear theory for the following linear problem
\begin{equation}\label{eq:eqw}
 \left\{
    \begin{array}{ll}
    - \D_{\R^N} \phi - p w_0^{p-1} \phi +\e\, a(\e z) \phi  =g  &\quad   \hbox{ in } \R^{N}_+\\[3mm]
    \frac{\pa \phi }{\pa \xi_N} = 0, &\quad \hbox{ on } \{ \xi_N = 0\}\\[3mm]
    \int_{\R^{N-1} \times [0, \infty)} \phi (\e z, \xi ) Z_j (\xi ) \, d\xi = 0 & \foral z \in K_\e, \, j=0, \ldots N-1.
    \end{array}
  \right.
\end{equation}

To do so we first define the following norms: Let $\delta >0$ be a
positive small fixed number and $r$ be a positive number. For a function
$w$ defined in $K_\ve \times \R^{N-1} \times [0, \infty)$, we define
\begin{equation}\label{eqinftynu}
\|w\|_{\e,r}:=\sup_{(z,\xi)\in K_\ve \times \{|\xi|\le
\frac{\delta}{\sqrt{\e}}\}}\left( \,(1+|\xi|^2)^{r \over
2}|w(z,\xi)| \right)+ \sup_{(z,\xi)\in K_\e\times \{|\xi|\ge
\frac{\delta}{\sqrt{\e}}\}}\left( \e^{-{r \over 2}}|w(z,\xi)|
\right).
\end{equation}

Let $\sigma \in (0,1)$. We further define
\begin{eqnarray}\label{normsigma}
\| w \|_{\e, r, \sigma} &:=& \| w \|_{\e , r} + \sup_{(z,\xi)\in
K_\ve \times \{|\xi|\le \frac{\delta}{\sqrt{\e}}\}}\left(
\,(1+|\xi|^2)^{r
+\sigma\over 2} [w]_{\sigma, B(\xi , 1)} \right)\\[3mm]
& +& \sup_{(z,\xi)\in K_\ve \times \{|\xi|\geq
\frac{\delta}{\sqrt{\e}}\}}\left( \,\e^{-{r +\sigma\over 2}}
[w]_{\sigma, B(\xi , 1)} \right)\nonumber
\end{eqnarray}
where we have set
\begin{equation}\label{eqnorms}
[w]_{\sigma, B(\xi , 1)} := \sup_{ \xi_1, \xi_2\in B(\xi , 1)}
\frac{|w(z,\xi_2)-w(z,\xi_1)|}{|\xi_1-\xi_2|^\sigma}.
\end{equation}

We have the validity of the following lemma.

\medskip
\begin{lemma}\label{l:freed} Let $r$ be a number with  $2<r < N$ and $\sigma \in (0,1)$.
Let $a : K \to \R$ be a smooth function, such that $a (y) \geq \la
>0$ for all $y \in K$. Let $g : K \times \R^{N-1} \times [0, \infty)
\to \R $ be a function that depends smoothly on the variable $y \in
K$, such that $ \| g \|_{\ve , r} < +\infty$
and
$$
\int_{\R^{N-1} \times [0, \infty)} g(\e z, \xi ) Z_j (\xi ) \, d\xi
= 0 \quad \foral z \in K_\e, \quad  j=0, \ldots N-1.
$$
Then there exist a positive constant $C$  such that for all sufficiently small $\ve$ there is a solution $\phi$ to
Problem \equ{eq:eqw} such that
\begin{equation}
\label{est0} \| D^2_\xi \phi \|_{\ve , r , \sigma } + \| D_\xi \phi
\|_{\ve , r -1 , \sigma } +\|\phi \|_{\ve, r- 2 , \sigma}\le C
\|g\|_{\ve,r ,\sigma}.
\end{equation}
Furthermore, the function $\phi$ depends smoothly on the variable
$\e z$, and the following estimates hold true: for any integer $l$
there exists a positive constant $C_l$ such that
\begin{equation}
\label{est1} \| D^l_z \phi \|_{\ve , r- 2 , \sigma }  \le C_l \left(
\sum_{k\leq l} \|D^k_z g\|_{\ve,r , \sigma}\right).
\end{equation}
\end{lemma}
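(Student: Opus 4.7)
\emph{Proof proposal.} My strategy is to treat $z \in K_\e$ as a parameter and solve the elliptic problem in $\xi$ on $\R^N_+$ for each fixed $z$. Since the only $z$-dependence of the coefficient lies in $a(\e z)$, the problem reduces for fixed $z$ to a linearization around the standard bubble $w_0$ plus a zeroth-order perturbation of size $\e$. The orthogonality conditions $\int \phi Z_j\,d\xi = 0$ kill the kernel of the unperturbed linearized operator $\LLL_0 := -\Delta - p w_0^{p-1}$ on $\R^N_+$ with Neumann data, which by the standard non-degeneracy of the bubble is spanned precisely by $Z_0,\dots,Z_{N-1}$ of \equ{lezetas}; hence one expects invertibility. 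The principal task is the a priori estimate \equ{est0}, whose structure is dictated by the two-scale design of the norm \equ{eqinftynu}.

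I would prove this a priori estimate by contradiction. Suppose that for some $\e_n \to 0$ there exist solutions $\phi_n$ and right-hand sides $g_n$ with $\|g_n\|_{\e_n, r, \sigma} \to 0$ and $\|\phi_n\|_{\e_n, r-2, \sigma} = 1$. In the inner region $|\xi| \le \delta/\sqrt{\e_n}$, interior elliptic regularity plus the polynomial weight yields, along a subsequence, local uniform convergence to a bounded solution $\phi_\infty$ of $\LLL_0 \phi = 0$ on $\R^N_+$ with Neumann boundary data, satisfying $\int \phi_\infty Z_j\,d\xi = 0$ for each $j$; by non-degeneracy $\phi_\infty \equiv 0$. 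I then upgrade this to quantitative decay by a barrier argument: since $p w_0^{p-1}$ decays like $|\xi|^{-4}$ and $2 < r < N$, the function $|\xi|^{2-r}$ is a supersolution of the linearization outside a large ball, and the maximum principle combined with the vanishing limit gives $|\phi_n(\xi)| \leq o(1)\,(1+|\xi|^2)^{-(r-2)/2}$ throughout the inner region, contradicting the normalization. In the outer region $|\xi| \ge \delta/\sqrt{\e_n}$ the operator is close to $-\Delta + \e_n a_0$, and an exponential barrier of the form $e^{-c \sqrt{\e_n} (|\xi| - \delta/\sqrt{\e_n})}$ propagates the smallness obtained at the matching interface. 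The key point is that the two pieces of the norm agree at $|\xi| \sim \delta/\sqrt{\e_n}$, where $(1+|\xi|^2)^{-(r-2)/2} \sim \e_n^{(r-2)/2}$, which is exactly the balance designed into \equ{eqinftynu}. Schauder estimates up to the flat boundary $\{\xi_N = 0\}$ then upgrade $L^\infty$ control to the Hölder, gradient and Hessian bounds asserted in \equ{est0}.

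Existence then follows from a standard approximation argument: solve the problem on the half-ball $B_R^+$ with Dirichlet data on the spherical part, Neumann data on the flat part, and Lagrange multipliers enforcing the orthogonality conditions. Since the associated bilinear form is coercive modulo the finite-dimensional span of the $Z_j$'s, Lax--Milgram combined with the Fredholm alternative produces a solution $\phi_R$; testing the equation against each $Z_j$ and using the orthogonality of $g$ shows that the Lagrange multipliers vanish. The a priori estimate \equ{est0} applies uniformly in $R$, so a diagonal compactness argument yields a solution on $\R^N_+$.

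Finally, the smoothness in $z$ and the estimate \equ{est1} are obtained by formally differentiating the equation with respect to $z_a$. The function $D_z^l \phi$ solves a problem of the same type with right-hand side $D_z^l g + \sum_{l_1+l_2 = l,\,l_1 \ge 1} c_{l_1,l_2}\,\e\, D_z^{l_1} a \cdot D_z^{l_2} \phi$; since the additional terms carry a factor $\e$ they are absorbed by induction on $l$ via \equ{est0}, giving the bound in terms of $\sum_{k \le l}\|D_z^k g\|_{\e, r, \sigma}$. I expect the main obstacle to be the delicate matching of the inner polynomial regime and the outer exponential regime in the a priori estimate, in particular verifying that the outer barrier controls the solution uniformly at the interface $|\xi| \sim \delta/\sqrt{\e_n}$; once that is done, Fredholm existence and smooth $z$-dependence are routine.
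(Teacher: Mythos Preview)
Your a priori estimate (the contradiction/blow-up in the inner zone plus a barrier in the outer zone, matched at $|\xi|\sim \delta\ve^{-1/2}$) and the subsequent Schauder upgrade are essentially the same route the paper takes in its Steps~1--4; the paper phrases the inner contradiction as a case analysis on where the weighted maximum is attained ($|\xi_n|$ bounded, or $|\xi_n|\to\infty$ with $\ve_n|\xi_n|^2\to 0$ or $\to\beta>0$), but the content is the same and your $|\xi|^{2-r}$ barrier is a legitimate alternative to that case split.

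The gap is in your existence argument. You assert that the bilinear form
\[
(\phi,\psi)\ \longmapsto\ \int_{B_R^+}\!\big(\nabla\phi\cdot\nabla\psi+\ve\,a\,\phi\psi-p\,w_0^{p-1}\phi\psi\big)
\]
is \emph{coercive} on the orthogonal complement of $\mathrm{span}\{Z_0,\dots,Z_{N-1}\}$, and then invoke Lax--Milgram. This is false: the standard bubble has Morse index one, so $-\Delta-p\,w_0^{p-1}$ has exactly one negative $L^2$-eigenvalue, with eigenfunction $Z$ as in \equ{lambda0}. Since $Z$ is an eigenfunction for a different eigenvalue it is $L^2$-orthogonal to each $Z_j$, and the quadratic form evaluated at $Z$ equals $-\lambda_0\int Z^2+\ve\int a\,Z^2<0$ for small $\ve$. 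So no coercivity holds on $\{Z_0,\dots,Z_{N-1}\}^\perp$, and Lax--Milgram does not apply as stated.

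This is exactly why the paper's Step~5 solves an \emph{auxiliary} problem with orthogonality also to $Z$: on $\{Z_0,\dots,Z_{N-1},Z\}^\perp$ the energy \emph{is} bounded below and one finds a minimizer $\bar\phi$ together with a Lagrange multiplier $\alpha(z)Z$; a direct test against $Z$ then shows $\alpha(z)=\int g(z,\cdot)Z$, and the missing $Z$-component is restored explicitly via $\phi=\bar\phi+\beta Z$ with $\beta=(\lambda_0+\ve a)^{-1}\int gZ$. Your scheme can be salvaged either by inserting this extra orthogonality to $Z$ (so that the form really is coercive and Lax--Milgram applies), or by dropping the coercivity claim altogether and running a genuine Fredholm argument on $B_R^+$, but then you must separately check that the truncated problem has trivial kernel on $\{Z_j\}^\perp$ uniformly in $R$, which your a priori estimate on $\R^N_+$ does not immediately give. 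Either way, the sentence ``coercive modulo the span of the $Z_j$'s'' needs to be replaced.
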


\bigskip
\begin{proof} The proof of this lemma will be divided into several steps.

{\bf Step 1}. \ \ We start showing the validity of an a-priori
external estimate for a solution to problem \equ{eq:eqw}. Assume
$\phi $ is a solution to problem \equ{eq:eqw}. Given $R>0$, we claim that
\begin{equation}
\label{mar1} | \phi (z, \xi ) | \leq C \left (\|  \phi \|_{L^\infty
(|\xi|=\delta R \ve^{-{1\over 2}})} + \ve^{{r-2 \over 2}} \| g
\|_{\ve , r} \right),
\end{equation}
for all $z \in K_\ve$ and $|\xi |>R\delta \ve^{-{1\over 2}}$.

Let $R>0$ be a  fixed number, independent of $\ve$. In the region
$|\xi |>R \delta  \ve^{-{1\over 2}}$ the function $\phi$ solves
$$
-\Delta \phi + \ve b(\e z, \xi ) \phi = g
$$
where
$$
b(\e z,\xi ) = a(\e z) - {p w_0^{p-1} \over \ve} = a(\e z) + \ve
\Theta_\ve (\xi ),
$$
with $\Theta_\ve $ a function  uniformly bounded in the considered
region, as $\ve \to 0$. Thus we have that $b$ uniformly positive and bounded as $\ve \to 0$.
Using the maximum principle, we get
$$
\| \phi \|_{L^\infty (|\xi|>\delta R \ve^{-{1\over 2} }) }\leq C
\left( \ve^{-1} \|  g \|_{L^\infty (|\xi |>\delta R \ve^{-{1\over
2}})} + \|  \phi \|_{L^\infty (|\xi|=\delta R \ve^{-{1\over 2}})}
\right)
$$
$$
\leq C  \left( \ve^{{r-2 \over 2}}\|  g \|_{\ve , r}  + \|  \phi
\|_{L^\infty (|\xi|=\delta R \ve^{-{1\over 2}})} \right),
$$
which gives \equ{mar1}.

\smallskip

{\bf Step 2}. \ \ Assume now that $\phi$ is a solution to
\equ{eq:eqw}. We will now prove that there exists $C>0$ such that
\begin{equation}
\label{mar2} \|\phi \|_{\ve, r- 2}\le C \|g\|_{\ve,r}.
\end{equation}
We argue by contradiction: let us assume that there exist sequences
$\ve_n \to 0$, $g_n$ with $\| g_n \|_{\ve_n , r} \to 0$ and
solutions $\phi_n$  to \equ{eq:eqw} with $\| \phi_n \|_{\ve_n ,  r
-2} =1$.

Let $z_n \in K_{\e_n}$ and $\xi_n$ be such that
$$
|\phi_n (\e_n z_n , \xi_n )| = \sup  |\phi_n (y, \xi )|.
$$
We may assume that, up to subsequences, $(\e_n z_n ) \to \bar y $ in
$K$. On the other hand, from Step 1, we get that
$$
\sup_{z \in K_{\e_n}, |\xi | >\delta R \ve_n^{-{1\over 2}}}
\ve_n^{{r - 2 \over 2}} |\phi_n (\e_n z, \xi ) | \leq C R^{-{r-2
\over 2}},
$$
thus choosing $R$ sufficiently large, but independent of $\ve_n$, we
have that
$$ \sup\limits_{z \in K_{\e_n}, |\xi | >\delta R
\ve_n^{-{1\over 2}}} \ve_n^{{r - 2 \over 2}} |\phi_n (z, \xi ) |
$$
is arbitrarily small. In particular one gets that $|\xi_n | \leq C
\ve_n^{-{1\over 2}}$ for some positive constant $C$ independent of
$\e_n$.

Let us now assume that there exists a positive constant $M$ such
that $|\xi_n |\leq M$. In this case, up to subsequences, one gets
that $\xi_n \to \xi_0$. We then consider the functions $\tilde
\phi_n ( z, \xi ) = \phi_n ( z , \xi + \xi_n )$. This is a sequence
of uniformly bounded functions, and the sequence $(\tilde \phi_n )$
converges uniformly over compact sets of $K \times \R^{N-1} \times
\R^+$ to a function $\tilde \phi$ solution to
$$
 \left\{
    \begin{array}{ll}
    - \D \tilde \phi - p w_0^{p-1} \tilde \phi =0 &\quad   \hbox{ in } \R^{N}_+\\
    \frac{\pa \tilde \phi }{\pa \xi_N} = 0, &\quad \hbox{ on } \{ \xi_N =
    0\}.
    \end{array}
  \right.
  $$
Since the orthogonality conditions pass to the limit, we get that
furthermore
$$
    \int_{\R^{N-1} \times [0, \infty)} \tilde \phi (y, \xi ) Z_j (\xi ) \, d\xi = 0 \quad \foral y \in K, \quad \foral j=0, \ldots N-1.
    $$
These facts imply that $\tilde \phi \equiv 0$, that is a
contradiction.

\medskip
Assume now that $\lim\limits_{n \to \infty} |\xi_n | = \infty$ and
define the function $\tilde \phi_n (z, \xi ) = \phi_n (z, |\xi_n |
\xi + \xi_n )$. Clearly $\tilde \phi_n$ satisfies the equation
$$
\Delta \tilde \phi_n + p C_N {|\xi_n |^2 \over (1+| \, |\xi_n | \xi
+ \xi_n |^2 )^2 } \tilde \phi_n -|\xi_n |^2 \ve_n  a \tilde \phi_n =
|\xi_n |^2 g (z, |\xi_n | \xi +\xi_n ).
$$
Consider first the case in which  $\lim_{n \to \infty} \ve_n |\xi_n
|^2 = 0$. Under our assumptions, we have that $\tilde \phi_n$ is
uniformly bounded and it converges locally over compact sets to
$\tilde \phi$ solution to
$$
\Delta \tilde \phi = 0 \quad {\mbox {in}} \quad \R^N, \quad |\tilde
\phi | \leq C |\xi |^{2-r}.
$$
Since $2<r<N$, we conclude that $\tilde \phi \equiv 0 $, which is a
contradiction.

Consider now the other possible case, namely that  $\lim_{n \to
\infty} \ve_n |\xi_n |^2 = \beta >0$. Then, up to subsequences we
get that $\tilde \phi_n$ converges uniformly over compact sets to
$\tilde \phi$ solution to
$$
\Delta \tilde \phi - \beta a \tilde \phi = 0 \quad {\mbox {in}}
\quad \R^N, \quad |\tilde \phi | \leq C |\xi |^{2-r}.
$$
This implies that $\tilde \phi \equiv 0$, which is a contradiction.
Taking into account the result of Step 1, the proof of \equ{mar2} is
completed.

\smallskip
{\bf Step 3}. \ \  We shall now show that there exists $C>0$ such
that, if $\phi$ is a solution to \equ{eq:eqw}, then
\begin{equation}
\label{mar3}   \| D_\xi \phi \|_{\ve ,
r -1  } +\|\phi \|_{\ve, r- 2 }\le C \|g\|_{\ve,r }.
\end{equation}

Let us first assume we are in the region $|\xi |<\delta
\ve^{-{1\over 2}}$, and $z \in K_\ve$. Thus, using estimate
\equ{mar2}, we have that $\phi$ solves $ -\Delta \phi =  \hat g $
in $ |\xi |< \delta \ve^{-{1\over 2}} $ where $|\hat g |\leq {\|g\|_{\ve, r}
\over (1+|\xi|^{r } )}$.

Let us now fix a point $e \in \R^N$, $|e|=1$ and a positive number $R>0$.
Perform the change of variables $ \tilde \phi (z,t) =   R^{r-2} \phi (z, Rt
+3Re)$, so that
$$
\Delta \tilde \phi =  \tilde g \quad {\mbox {in}}
\quad |t|\leq 1
$$
where $\ttt g(t,z)=  R^r \hat g( z, Rt + 3Re)$, so that   $$\|\ttt\phi \|_{L^\infty(B(0,2))} + \|\tilde g  \|_{L^\infty(B(0,2))} \leq  \|g\|_{\ve, r} .$$
 Elliptic
estimates
give then that $\| D \tilde \phi \|_{L^\infty (B(0,1))} \leq C
\| \tilde g \|_{L^\infty (B(0,2))}$. This inequality implies then that
$$
\| (1+|\xi |)^{r-1} D \phi \|_{L^\infty (|\xi |\leq \delta
\e^{-{1\over2}} )} \leq C \| (1+|\xi |)^{r} g \|_{L^\infty (|\xi
|\leq 2\delta \e^{-{1\over2}} )}.
$$

Assume now that $|\xi |>\delta \e^{-{1\over 2}}$. In this region the
function $\phi$ solves
$$
-\Delta \phi= \hat g
$$
where
 $|\hat g|\leq C\|g\|_{\ve,r} \e^{r \over 2}$, and $ | \phi | \leq C\|g\|_{\ve,r}\ve^{{r - 2 \over 2}}$.  After scaling out $\ve^{\frac 12}$, elliptic estimates yield $
 | D\phi | \leq C \ve^{{r - 1 \over 2}}$. This concludes the proof of \equ{mar3}.

\smallskip

{\bf Step 4}. \ \  We shall now show that
\begin{equation}
\label{mar4} \| D^2_\xi \phi \|_{\ve , r , \sigma } + \| D_\xi \phi
\|_{\ve , r -1 , \sigma } +\|\phi \|_{\ve, r- 2 , \sigma}\le C
\|g\|_{\ve,r ,\sigma}.
\end{equation}

Let us first assume we are in the region $|\xi |<\delta
\ve^{-{1\over 2}}$.
 Thus, we write that
$\phi$ solves $ -\Delta \phi = \hat g $ in $ |\xi |< \delta
\ve^{-{1\over 2}} $ where, thanks to the $C^1$-estimate of Step 3,  $\|\tilde g\|_{\ve,r ,\sigma} \leq C\|g\|_{\ve,r ,\sigma}$.

Arguing as in the previous step, we fix a point $e \in \R^N$, $|e|=1$ and a
positive number $R>0$ and let $\ttt\phi$ and $\ttt g$ be defined as in Step 3.  Elliptic estimates
give then that $\|D^2 \tilde \phi \|_{C^{0,\sigma}
(B(0,1))} \leq C \| \tilde g \|_{C^{0,\sigma} (B(0,2))}$.
This inequality gets translated in terms of $\phi$ as the desired Schauder estimate within $|\xi| \le  \delta \ve^{-\frac 12}$.  The H\"older estimate for  $D\phi$
follows by interpolation.
In the region $|\xi |>\delta \e^{-{1\over 2}}$, we argue exactly as in the
proof of Step 3.   This concludes the
proof of \equ{mar4}.

\smallskip

{\bf Step 5}. \ \ Now we shall prove the existence of the solution
$\phi$ to problem \equ{eq:eqw}. We consider first the following
auxiliary problem: find $\bar \phi$ and $\alpha :K_\e \to \R$
solution to
\begin{eqnarray} \label{au}
 \left\{
    \begin{array}{ll}
    - \D \bar \phi - p w_0^{p-1} \bar \phi +\e\, a(y) \bar \phi  =g  +\alpha (z) Z (\xi ) &\quad   \hbox{ in }
    \R^{N}_+\\[3mm]
    \frac{\pa \bar \phi }{\pa \xi_N} = 0, &\quad \hbox{ on } \{ \xi_N = 0\}\\[3mm]
   \int_{\R^{N-1} \times \R^+} \bar \phi (z, \xi ) Z_j (\xi ) \, d\xi =
   \int_{\R^{N-1} \times \R^+} \bar \phi (z, \xi ) Z(\xi ) \, d\xi = 0 & z \in K_\e, \\
    ( j=0, \ldots N-1),
    \end{array}
  \right.
\end{eqnarray}
where $Z$ is
 the first eigenfunction, with corresponding first eigenvalue $\lambda_0 >0$, in
$L^2(\R^N)$  of the problem
\begin{equation}
\label{denotareZ0}
 \Delta_\xi \bar \phi + pw_0(\xi )^{p-1} \bar \phi  = \la \bar \phi\quad   \mbox{ in} \quad
 \R^{N}.
\end{equation}
The above problem is variational: for any fixed $z \in K_\e$,
solutions to \equ{au} correspond to critical points of the energy
functional
$$
E(\bar \phi ) = {1\over 2} \int_{\R^{N-1} \times \R^+}  |
\nabla \bar \phi|^2 - p w_0^{p-1} \bar \phi^2 +\ve a \bar \phi^2
 - \int_{\R^{N-1} \times \R^+} g\bar\phi
$$
for functions $\bar \phi \in H^1 (\R^{N-1} \times \R^+)$, that
satisfy $$\int_{\R^{N-1} \times \R^+} \bar \phi Z_j =\int_{\R^{N-1}
\times \R^+} \bar \phi Z = 0$$ for all $j=0 , \ldots , N-1$. This
functional is smooth, uniformly bounded from below, and satisfies
the Palais-Smale condition. We thus conclude that $E$ has a minimum,
which gives a solution to \equ{au}.

Observe now that, multiplying the equation in \equ{au} against $Z$,
integrating in $\R^{N-1} \times \R^+$, and using the orthogonality
conditions in \equ{au}, one easily gets that
\begin{equation}  \label{auu}
\alpha (z) = \int_{\R^{N-1} \times \R^+ }g(z, \xi ) Z (\xi ) \, d\xi
\quad \foral z \in K_\e.
\end{equation}

Given $(\bar \phi , \alpha )$ solution to \equ{au}, we define
$$
\phi = \bar \phi +\beta Z \quad {\mbox {with}} \quad \beta (z) =
{\int g(z, \xi ) Z (\xi ) \, d \xi \over \lambda_0 + \ve a(\e z)}.
$$
A straightforward computation shows that $\phi$ is a solution to
\equ{au}.

\medskip
Finally, estimate \equ{est1} follows by a direct differentiation of
equation \equ{eq:eqw} and a use of estimate \equ{est0}. This
concludes the proof of Lemma \ref{l:freed}.
\end{proof}

\setcounter{equation}{0}
\section{Construction of approximate solutions}\label{s:aprsol}
\smallskip

This section will be devoted to build an approximate solution to
Problem \eqref{adesso} locally close to $K_\ve$,  using  an iterative method that we describe below: Let $I$ be an integer. The
expanded variables $(z, \xi)$ will be defined as in \equ{b0} with
\begin{equation}
\label{deffmu} \mu_\ve(\ve z)=  \mu_0+\mu_{1,\e}+\dots +\mu_{I,\e},
\end{equation}
where $\mu_0 , \, \mu_{1,\e} , \ldots , \mu_{I , \e} $ are
smooth functions on $K$, with $\mu_0$ positive,   and
\begin{equation}
\label{deffPhi} \Phi_\e(\ve z) =  \Phi_{1,\e} + \dots +  \Phi_{I,\e} ,
\end{equation}
where $\Phi_{1,\e} , \,  \dots , \, \Phi_{I,\e}$ are smooth
functions defined along $K$ with values in $\R^{N-1}$.
 In the $(z, \xi )$ variables, the shape of the approximate solution will be  given by
\begin{equation}\label{vIe}
  W_{I+1,\e}(z, \ov \xi, \xi_N)= w_0\left( \xi \right) +
  w_{1,\e}\left(z, \xi \right)
  + \dots +  w_{I+1,\e}\left(z, \xi \right), \quad \xi = (\bar \xi , \xi_N )
\end{equation}
where $w_0$ is given by  \equ{w00}
and  the functions $w_{j, \e}$'s for $j\ge 1$ are to be
determined so that the above function $W_{I+1,\e}$
satisfies formally
$$
-{\mathcal A}_{\mu_\ve , \Phi_\e } W_{I+1,\e}+\ve\,
\mu_\ve^2\,W_{I+1,\e}-W_{I+1,\e}^\frac{N+2}{N-2}=\mathcal{O}(\ve^{1+{I\over
2}}) \quad {\mbox {in}} \quad K_\ve \times \R^{N-1} \times \R^+.
$$
This can be done expanding the equation \eqref{adesso} formally in
powers of $\e$ (and in terms of $\mu_\ve$ and $\Phi_\e$) for $W=
W_{I+1,\e}$ (using basically Lemma \ref{scaledlaplacian}) and
analyzing each term separately. For example, looking at the
coefficient of $\e$ in the expansion we will determine $\mu_0$ and
$w_{1 \e}$, while looking at the coefficient of $\e^{1+{j\over 2}}$ we will determine
$w_{j,\e}$, $\mu_{j-1 , \e}$ and $\Phi_{j-1 , \e}$, for $j = 2, \dots,
I+1$. In this procedure we use crucially the non degeneracy
assumption on $K$ (which implies the invertibility of the Jacobi
operator) when considering the projection on some elements of the
kernel of the linearization of \eqref{eq:pe} at $w_0$, while when
projecting on the remaining part of the kernel we have to choose the
functions $\mu_{j, \ve}$. This Section is devoted to do this construction.

\
\begin{lemma} \label{Construction}
For any integer $I \in \N$ there exist  smooth functions $\mu_\e :K
\to \R$ and $\Phi_\e : K \to \R^{N-1}$,  
such that
\begin{equation}
\label{bf2} \| \, \mu_{\e}\|_{L^\infty (K)} +\| \partial_a
\mu_{\e}\|_{L^\infty (K)} +\|\partial^2_a \, \mu_{\e}\|_{L^\infty
(K)} \leq C
\end{equation}
\begin{equation}
\label{bf3} \| \, \Phi_{\e}\|_{L^\infty (K)} +\| \partial_a
\Phi_{\e}\|_{L^\infty (K)} +\|\partial^2_a
\Phi_{\e}\|_{L^\infty (K)} \leq C
\end{equation}
for some positive constant $C$, independent of $\e$. In particular, we have
\begin{equation} \label{boh}
\| \, \mu_\e  - \mu_0 \|_{L^\infty (K)} \to 0 , \quad \|
\Phi_\e  - \Phi_0 \|_{L^\infty (K)} \to 0
\end{equation}
where $\mu_0 $ is the function defined explicitly as
$$
\mu_0 (y) = {\int_{\R^N_+} \xi_N |\partial_1 w_0|^2 \over \int_{\R^N_+} w_0^2 } \left[ 2 H_{aa} (y) + H_{ii} (y) \right].
$$
By assumption \equ{condizionesumu}, it turns out that the function $\mu_0$ is strictly positive along $K$.
Moreover $\Phi_0$ is
a smooth function along $K$ with values in $\R^{N-1}$. Furthermore there exists a positive
function $W_{I+1, \e} :K_\ve \times \R^N_+ \to \R$ such that
$$
{\mathcal A}_{\mu_\e , \Phi_\e} (W_{I+1 , \e} ) -\ve \mu_\e^2 W_{I+1
, \e} + W_{I+1, \e}^p = {\mathcal E}_{I+1 , \e} \quad {\mbox {in}}
\quad K_\ve \times \R^N_+
$$
$$
{\partial W_{I+1 , \e} \over \partial \nu} = 0 \quad {\mbox {on}}
\quad \partial \R^N_+
$$
with
\begin{equation}
\label{boh1} \| W_{I+1 , \e} - W_{I , \e} \|_{\e , N-4 , \sigma } \leq C
\e^{1+{I\over 2}}
\end{equation}
and
\begin{equation} \label{bf4}
\| {\mathcal E}_{I+1 , \e} \|_{\e , N-2 , \sigma} \leq C \e^{1+{I+1
\over 2}}.
\end{equation}

\end{lemma}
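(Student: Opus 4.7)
The plan is to construct $W_{I+1,\e} = w_0 + w_{1,\e} + \cdots + w_{I+1,\e}$ together with the parameters $\mu_\e$, $\Phi_\e$ by a Picard-type iteration in half-integer powers of $\e$. Substituting the ansatz into ${\mathcal S}_\e(W_{I+1,\e}) = -{\mathcal A}_{\mu_\e,\Phi_\e}W_{I+1,\e} + \e\mu_\e^2 W_{I+1,\e} - W_{I+1,\e}^p$ and using Lemma~\ref{scaledlaplacian} to expand in powers of $\e^{1/2}$ (the fact that $x = \e(\mu_\e\xi + \Phi_\e)$ introduces explicit factors of $\e$ on each polynomial in $\xi$), one collects terms of each order and sets each to zero. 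At order $\e^{0}$ the equation is solved by $w_0$ itself thanks to \eqref{w00}--\eqref{w0}. At every subsequent order one obtains a linear equation of the form $\calL w_j = F_j$ on $\R^N_+$ with Neumann condition on $\{\xi_N = 0\}$, where $\calL = -\Delta - pw_0^{p-1}$ and $F_j$ is a polynomial in $\xi$ with coefficients depending smoothly on $y\in K$ and on the previously determined profiles and parameters.

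At order $\e$, the terms generated by ${\mathcal A}_1$ and ${\mathcal A}_3$ in Lemma~\ref{scaledlaplacian}, together with $\e\mu_\e^2 w_0$, produce $F_1$ linear in $H_{ij}(y)$ and proportional to $\xi_N$-weighted quantities. Imposing the solvability condition of Lemma~\ref{l:freed} with respect to $Z_0 = \xi\cdot\nabla w_0 + \tfrac{N-2}{2}w_0$ yields, after integration by parts on $\R^N_+$ and the identity $2\int \xi_N\,\partial_N w_0\,Z_0 = \int w_0^2$, an algebraic equation for $\mu_0$ whose solution is
\[
\mu_0(y) = \frac{\int_{\R^N_+} \xi_N |\partial_1 w_0|^2}{\int_{\R^N_+} w_0^2}\,\bar H(y),
\]
strictly positive by hypothesis \eqref{condizionesumu}. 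The solvability against $Z_1,\dots,Z_{N-1}$ is automatic at this step by parity of the integrands in $\bar\xi$. Applying Lemma~\ref{l:freed} delivers a smooth profile $w_{1,\e}$ with the required decay.

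The inductive step follows the same pattern. Assuming $w_0,w_{1,\e},\ldots,w_{j,\e}$, $\mu_0,\mu_{1,\e},\ldots,\mu_{j-2,\e}$ and $\Phi_{1,\e},\ldots,\Phi_{j-2,\e}$ have been constructed, the coefficient of $\e^{1+j/2}$ in the formal expansion defines $F_{j+1}$. The orthogonality against $Z_i$ for $i = 1,\dots,N-1$ reduces, after $\xi$-integration, to
\[
\mathfrak{J}\,\Phi_{j-1,\e}^{i}(y) = G^{i}_{j-1}(y)\quad\text{on }K,
\]
where $\mathfrak{J}$ is the Jacobi operator in the form \eqref{Jacobinodeg}; by the non-degeneracy hypothesis $\mathfrak{J}$ is invertible on smooth sections of $NK$, so $\Phi_{j-1,\e}$ is uniquely and smoothly determined. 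The orthogonality against $Z_0$ is again algebraic and of the form (positive constant)$\cdot\mu_{j-1,\e}(y) = \text{lower order data}$, giving a smooth $\mu_{j-1,\e}$. Lemma~\ref{l:freed} then produces $w_{j+1,\e}$ with $\|w_{j+1,\e}\|_{\e,N-4,\sigma} = O(\e^{(j+1)/2})$; the hypothesis $N-k = N \ge 7$ guarantees $2 < N-2 < N$, so the linear theory applies with the required weight.

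Once the iteration is carried out up to $j=I+1$ the bounds \eqref{bf2}, \eqref{bf3} and \eqref{boh} are immediate from the construction, since each increment is $O(\e^{j/2})$ smooth on the fixed compact manifold $K$. The residual ${\mathcal E}_{I+1,\e}$ is, by construction, everything of formal order $\e^{1+(I+1)/2}$ or higher, so applying Taylor's theorem to both ${\mathcal A}_{\mu_\e,\Phi_\e}$ (term by term using Lemma~\ref{scaledlaplacian}) and to $W_{I+1,\e}^p$ around $w_0$, and using the weighted norm bounds for each $w_{j,\e}$, gives \eqref{bf4}. The principal difficulty is organizational: one must verify that at every order the solvability system genuinely splits into an invertible Jacobi equation for $\Phi_{j,\e}$ and an algebraic (non-differential) equation for $\mu_{j,\e}$ with coefficient that does not vanish, so that the use of non-degeneracy together with the sign condition $\bar H > 0$ is exactly what is needed. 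The first step is the decisive one because it is where the sign condition \eqref{condizionesumu} enters via the positivity of $\mu_0$; once $\mu_0 > 0$ is secured, the subsequent algebraic solvabilities reduce to linear equations whose coefficients are positive multiples of $\mu_0$ and are therefore automatically solvable.
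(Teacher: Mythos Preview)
Your proposal is correct and follows essentially the same approach as the paper: an iterative Picard-type construction where at each step the solvability conditions from Lemma~\ref{l:freed} determine $\mu_{j,\e}$ algebraically (projection onto $Z_0$) and $\Phi_{j,\e}$ via the Jacobi operator (projection onto $Z_1,\dots,Z_{N-1}$), with the non-degeneracy of $K$ ensuring invertibility and the sign condition~\eqref{condizionesumu} entering precisely through the positivity of $\mu_0$. A few bookkeeping points to tighten: your indexing is shifted by one relative to the paper (constructing $w_{j+1,\e}$ determines $\mu_{j,\e},\Phi_{j,\e}$, not $\mu_{j-1,\e},\Phi_{j-1,\e}$), the size of the correction is $\|w_{j+1,\e}\|_{\e,N-4,\sigma}=O(\e^{1+j/2})$ rather than $O(\e^{(j+1)/2})$, and the specific integral identity you quote is not quite the one the paper uses---the key computations are $\int_{\R^N_+} w_0 Z_0=-\int_{\R^N_+} w_0^2$ together with the nontrivial constant relation $\mathfrak A_0=2\mathfrak A_1$, which is what produces the exact coefficient $2H_{aa}+H_{ii}$ in $\mu_0$.
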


\medskip
We should emphasize that $\int w_0^2$ is indeed finite thanks to the fact that $N\ge 5$, and we are actually assuming that $N\ge 7$.
The rest of this Section is devoted to do this construction.

\medskip

\noindent $\bullet$ {\bf  Construction of $w_{1,\e}$ :}
 Using Lemma
\ref{scaledlaplacian}, we
\textit{formally} have
\begin{eqnarray*}
  - {\mathcal A}_{\mu_\ve , \Phi_\e } W_{1,\e} + \ve\,\mu_\ve^2 W_{1,\e} - W_{1,\e}^p & = &
  - \D_{\R^{N}} w_0 - w_0^p \\&+& \left( - \D_{\R^{N}} w_{1,\e}
  - p w_0^{p-1} w_{1,\e}+\e \mu_0^2w_{1,\e}\right) \\
   & + & \ve\mu_0 \left[ H_{\alpha\alpha} \partial_{\xi_N} w_0 - 2 \xi_N H_{ij} \partial^2_{ij}
   w_0 + \mu_0(y)\,w_0\right]\\
  &+& \mathcal{E}_{1,\e} + Q_\ve (w_{1,\ve }),
\end{eqnarray*}
where $\mathcal{E}_{1, \e}$ is a sum of
functions of the form
$$
\ve \mu_0 \left( \ve \mu_0 + \ve \partial_a \mu_0 + \ve \partial^2_a
\mu_0 \right) a (z) b (\xi )
$$
and $a(\ve z)$ is a smooth function uniformly bounded, together
with its derivatives,  as $ \e \to 0$, while the function $b$ is
such that
$$
\sup_{\xi } (1+|\xi|^{N-2} ) |b (\xi ) |< \infty.
$$
The term $Q_\ve (w_{1,\e} )$ is quadratic in $w_{1,\e}$, in fact it
is explicitly given by
$$
-(w_0 + w_{1,\e} )^p + w_0^p + p w_0^{p-1} w_{1,\e}.
$$
Observe now that the term of order $0$ (in the power expansion in
$\e$) vanishes because of the equation satisfied by  $w_0$. In
order to make the coefficient of $\e$ vanish, $w_{1,\e}$ must
satisfy the following equation
\begin{equation}\label{eq:eqw1}
 \left\{
    \begin{array}{ll}
    - \D w_{1,\e} - p w_0^{p-1} w_{1,\e}+\e\,\mu_0^2 w_{1,\e} =\e g_{1,\e} (\ve z , \xi )& \hbox{ in } \R^{N}_+, \\
    \frac{\pa w_{1,\e}}{\pa \xi_N} = 0, & \hbox{ on } \{ \xi_N =
    0\},
    \end{array}
  \right.
\end{equation}
where
\begin{equation}
\label{g1e} g_{1,\e} (\ve z , \xi ) =  - \mu_0(y)\bigg[H_{\alpha
\alpha} \partial_{\xi_N} w_0 - 2 \xi_N H_{ij} \partial^2_{ij}
    w_0+\mu_0(y)\,w_0\bigg].
    \end{equation}
Using Lemma \ref{l:freed}, we see that equation \eqref{eq:eqw1} is
solvable if the right-hand side is $L^2$-orthogonal to the functions $Z_j$, for $j=0, \ldots , N-1$. These conditions,
for $j = 1, \dots, N-1$ are clearly satisfied since both
$\partial_{N} w_0$ and $\partial^2_{ij} w_0$ are even in $\bar\xi$,
while the $Z_i$'s are odd in $\bar\xi$ for every $i$. It remains to
compute the $L^2$ product of the right-hand side against $Z_0$. We
claim that
\begin{equation}
 \int_{\R^N_+}\left(H_{\alpha\alpha}\del_Nw_0-2H_{ij}\xi_N\,\del^2_{ij} w_0\right)Z_0=
\mathfrak{A}_0  \,H_{\alpha\alpha} - \mathfrak{A}_1 H_{ii}
\label{condw0}
\end{equation}
where $\mathfrak{A}_0$ and $\mathfrak{A}_1$ are the constants
defined by
\begin{equation}
\label{cccon1} \mathfrak{A}_0= {1\over 2} \int_{\R^N_+} \xi_N
|\nabla w_0 |^2 -{N-2 \over 2N} \int_{\R^N_+} \xi_N w_0^{{2N\over
N-2}},
\end{equation}
\begin{equation}
\label{cccon2} \mathfrak{A}_1= \int_{\R^N_+} \xi_N |\partial_1
w_0|^2  >0.
\end{equation}
Furthermore, we have
\begin{equation}
\label{condw1} \int_{\R^N_+} w_0 Z_0 = -\int_{\R^N_+} w_0^2.
\end{equation}
Indeed, since $( \pa_\mu w_\mu )_{|\mu=1} = -Z_0$, we have that
$$
\int_{\R^N_+} w_0 Z_0 =-\int_{\R^N_+} w_0 ( \pa_\mu w_\mu
)_{|\mu=1}= -{1\over 2} \pa_\mu \left( \int_{\R^N_+} w_\mu^2
\right)_{|\mu=1}=-{1\over 2} \pa_\mu \left( \mu^2\,\int_{\R^N_+}
w_0^2 \right)_{|\mu=1}.
$$

\noindent We postpone the proof of \equ{condw0}. We turn now to the
solvability in $w_{1,\e}$. Assuming the  quantity on the right hand
side of \eqref{condw0} is negative, we define
\begin{equation}\label{mu}
\mu_0 (y):=  \frac{\left[\mathfrak{A}_0   \,H_{\a\a} -
\mathfrak{A}_1 \, H_{ii} \right]}{\int_{\R^N_+}w_0^2 }.
\end{equation}
With this choice for $\mu_0$, the integral of  the right hand side
in \eqref{eq:eqw1} against $Z_{0}$ vanishes on $K$ and this implies
the existence of $w_{1,\e}$, thanks to Lemma \ref{l:freed}.
Moreover, it is straightforward to check that
$$
\|  g_{1,\e} \|_{\ve , N-2 , \sigma } \leq C
$$
for some $\sigma \in (0,1)$. Lemma \ref{l:freed} thus gives that
\begin{equation}
\label{ew1} \| D^2_\xi w_{1,\e} \|_{\ve , N-2 , \sigma } + \| D_\xi
w_{1,\e} \|_{\ve , N-3 , \sigma } +\|w_{1,\e} \|_{\ve, N-4 ,
\sigma}\le C \ve
\end{equation}
and that there exists a positive constant $\beta$ (depending only on
$\Omega, K$ and $N$) such that for any integer $\ell$ there holds
\begin{equation}\label{eq:estw1}
    \|\nabla^{(\ell)}_{z} w_{1,\e}(z,\cdot)\|_{\ve,N-2,\sigma} \leq \beta C_l
    \ve \qquad \quad z \in K_\ve
\end{equation}
where $C_l$ depends only on $l$, $p$, $K$ and $\Omega$.

\medskip
\noindent {\textbf{Proof of
}\equ{condw0}--\equ{cccon1}--\equ{cccon2}}. \ \

We first compute $\int_{\R^N_+} w_0 \partial_N w_0$. To do so, for
any $\mu>0$ we denote by $w_\mu$ the scaled function
$$
w_\mu (x) = \mu^{-{N-2 \over 2}} w_0 ( \mu^{-1}x).
$$
Since $ ( \partial_\mu w_\mu )_{|_{\mu=1}} = -Z_0$,  a direct
differentiation and integration by parts gives that
$$
\int_{\R^N_+} Z_0 \partial_N w_0 = \partial_\mu \left[ {1\over 2}
\int_{\R^N_+} \xi_N |\nabla w_\mu |^2 -{N-2 \over 2N} \xi_N
w_\mu^{{2N\over N-2}} \right]_{|_{\mu = 1}}.
 $$
Now changing variables $\xi\mapsto \mu \xi$ an direct computation
gives
$$
 {1\over 2} \int_{\R^N_+} \xi_N |\nabla w_\mu |^2 -{N-2 \over 2N} \int_{\R^N_+} \xi_N w_\mu^{{2N\over N-2}} = \mu \left[
 {1\over 2} \int_{\R^N_+} \xi_N |\nabla w_0 |^2 -{N-2 \over 2N} \int_{\R^N_+} \xi_N w_0^{{2N\over N-2}}
\right].
 $$
 from which \equ{cccon1} follows. Next, we compute $-  2\int_{\R^N_+} \xi_N \pa_{ij}^2 w_0 Z_0$. By
symmetry we have that $ 2\int_{\R^N_+} \xi_N \pa_{ij}^2 w_0 Z_0 = 0
$ if $i\not= j$. Assume then that $i=j$ is fixed and integrations by
parts, a direct differentiation yields
$$
-2\int_{\R^N_+} \xi_N \pa_{ii}^2 w_0 Z_0 = 2\int_{\R^N_+} \xi_N
\pa_{i} w_0 \pa_i Z_0 = -\partial_\mu \left[ \int_{\R^N_+} \xi_N
|\partial_1 w_\mu |^2 \right]_{|_{\mu=1}}
$$
and also
$$
\int_{\R^N_+} \xi_N |\partial_1 w_\mu |^2 = \mu \int_{\R^N_+} \xi_N
|\partial_1 w_0 |^2.
$$
The above facts give the validity of \equ{condw0}. Now we claim that
$\mathfrak{A}_0=2\mathfrak{A}_1$.  Indeed
\begin{eqnarray*}
\int_{\R^N_+} \xi_N|\nabla w_0 |^2&=& \int_{\R^N_+} \xi_N|\pa_N w_0|^2 +\int_{\R^N_+} \xi_N|\pa_i w_0|^2\\
  &=&\int_{\R^N_+} \xi_N|\pa_N w_0|^2 +(N-1)\int_{\R^N_+} \xi_N|\pa_1
w_0|^2\\
&=&(N+1)\int_{\R^N_+} \xi_N|\pa_1 w_0|^2=(N+1)\mathfrak{A}_1.
\end{eqnarray*}
Here we used the fact that
$$
\int_{\R^N_+} \xi_N|\pa_N w_0|^2=2\int_{\R^N_+} \xi_N|\pa_1 w_0|^2
$$
since
\begin{eqnarray*}
 \int_{\R^N_+} \xi_N|\pa_N w_0|^2&=& \alpha_N^2\int_{\R^N_+} \xi_N^2 \frac{\xi_N}{(1+|\xi|^2)^N} \\
   &=& -\alpha_N^2\frac{1}{2(N-1)}\int_{\R^N_+} \xi_N^2\pa_{N}\bigg( \frac{1}{(1+|\xi|^2)^{N-1}}
\bigg)\\
&=&\alpha_N^2\frac{1}{(N-1)}\int_{\R^N_+}
\frac{\xi_N}{(1+|\xi|^2)^{N-1}}
\end{eqnarray*}
and
\begin{eqnarray*}
 \int_{\R^N_+} \xi_N|\pa_1 w_0|^2&=& \alpha_N^2\int_{\R^N_+} \xi_N \xi_1\frac{\xi_1}{(1+|\xi|^2)^N} \\
   &=& -\alpha_N^2\frac{1}{2(N-1)}\int_{\R^N_+} \xi_N \xi_1\pa_{1}\bigg( \frac{1}{(1+|\xi|^2)^{N-1}}
\bigg)\\
&=&\alpha_N^2\frac{1}{2(N-1)}\int_{\R^N_+}
\frac{\xi_N}{(1+|\xi|^2)^{N-1}}.
\end{eqnarray*}

On the other hand
\begin{eqnarray*}
 \int_{\R^N_+} \xi_N w_0^{{2N\over
N-2}} &=& \frac12\int_{\R^N_+} \pa_N(\xi_N^2)  w_0^{{2N\over
N-2}} \\
   &=& -\frac12\int_{\R^N_+} {2N\over
N-2} \xi_N^2  w_0^{{N+2\over N-2}}\pa_N w_0\\
&=& -\frac{N}{N-2}\int_{\R^N_+}\xi_N^2 \pa_N w_0
\underbrace{w_0^{{N+2\over
N-2}}}_{-\Delta w_0}\\
&=&\frac{N}{N-2}\int_{\R^N_+}\xi_N^2 \pa_N w_0 \bigg( \pa^2_{NN}w_0
+\pa_{ii} w_0\bigg).
\end{eqnarray*}
Now we use the fact that
$$
\int_{\R^N_+}\xi_N^2 \pa_N w_0 \pa^2_{NN}w_0=-2\int_{\R^N_+}\xi_N
\pa_N w_0 \pa_{N}w_0-\int_{\R^N_+}\xi_N^2 \pa^2_{NN}w_0\pa_N w_0
$$
which implies that
$$
\int_{\R^N_+}\xi_N^2 \pa_N w_0 \pa^2_{NN}w_0=-\int_{\R^N_+}\xi_N
\pa_N w_0 \pa_{N}w_0=-2\mathfrak{A}_1.
$$
Now integrating first in $\xi_1$ and then in $\xi_N$ yields
\begin{eqnarray*}
 I:= \int_{\R^N_+}\xi_N^2 \pa_N w_0 \pa^2_{11}w_0&=& -\int_{\R^N_+}\xi_N^2
\pa^2_{N1}w_0\pa_1 w_0 \\
  &=&\int_{\R^N_+}\pa_1 w_0 \pa_N(\xi_N^2 \pa_{1}w_0)\\
&=&\int_{\R^N_+}\pa_1 w_0 \,\bigg(2\xi_N
\pa_{1}w_0+\xi_N^2\pa^2_{1N}w_0\bigg)\\
&=& 2\mathfrak{A}_1-I.
\end{eqnarray*}
This implies that
$$
I=\mathfrak{A}_1 \quad \hbox{and }\quad \int_{\R^N_+}\xi_N^2 \pa_N
w_0 \pa^2_{ii}w_0=(N-1)\int_{\R^N_+}\xi_N^2 \pa_N w_0
\pa^2_{11}w_0=(N-1)\mathfrak{A}_1.
$$
We deduce that
\begin{eqnarray*}
 \int_{\R^N_+} \xi_N w_0^{{2N\over
N-2}} &=&\frac{N}{N-2} \bigg(-2 \mathfrak{A}_1 +(N-1)\mathfrak{A}_1
\bigg)=\frac{N(N-3)}{N-2}\mathfrak{A}_1.
\end{eqnarray*}
Hence
\begin{eqnarray*}
\mathfrak{A}_0&=& {1\over 2} \int_{\R^N_+} \xi_N |\nabla w_0 |^2
-{N-2\over 2N} \int_{\R^N_+} \xi_N w_0^{{2N\over N-2}} \\
&=& \frac{N+1}{2}\mathfrak{A}_1-{(N-2)\over
2N}\;\frac{N(N-3)}{N-2}\mathfrak{A}_1=2\mathfrak{A}_1.
\end{eqnarray*}
This proves the claim. In particular Equation \eqref{mu} can be
written as
\begin{equation}\label{mu0}
\mu_0 (y):=
\mathfrak{A}_1 \, \frac{\left[ 2\,H_{\a\a} -
 H_{ii} \right]}{\int_{\R^N_+}w_0^2 }=
\mathfrak{A}_1 \, \frac{\left[ 2\,H_{aa}
+ H_{ii} \right]}{\int_{\R^N_+}w_0^2 }.
\end{equation}

\

\

\noindent $\bullet$ {\bf  Construction of $w_{2,\e}$ :} We take
$I=2$, $\mu_\e= \mu_0+\mu_{1,\e}$, $\Phi_\e =  \Phi_{1, \e}$ and
$W_{2,\e}(z, \xi)= w_0\left( \xi \right) +
  w_{1,\e}\left(z, \xi \right)
  +   w_{2,\e}\left(z, \xi \right)$, where $\mu_0$ and $w_{1,\ve}$ have already been constructed  in the previous step.
  Computing $S(W_{2,\e})$ (see \equ{Sep}) we get
  \begin{equation}\label{edith1}
  - \Delta w_{2,\e} + \e \,\mu_0^2 w_{2,\e} - p w_0^{p-1} w_{2,\e} = \ve g_{2,\ve} +
   \mathcal{E}_{2,\e} + Q_\ve (w_{2, \e}).
\end{equation}
In \equ{edith1} the function $g_{2,\e}$ is given by
\begin{eqnarray}\label{w2epsilon}
g_{2,\ve}&=&  \mu_{1,\e}(y)\bigg[-H_{\alpha\alpha}
\partial_{\xi_N} w_0 + 2 \xi_N H_{ij} \partial^2_{ij}
    w_0-2\mu_0w_0\bigg] \nonumber \\
    &+& \mu_0(y)\bigg[-H_{\alpha\alpha} \partial_{\xi_N} w_{1,\e} +
2 \xi_N H_{ij} \partial^2_{ij}
    w_{1,\e}\bigg] + \ve \mathfrak{G}_{2,\e}({ \xi}, z,
w_0,  \mu_0) \nonumber\\
&-&  \e \mu_0 \,\D_K \Phi^j_{1, \e}\, \pa_jw_0-{\e\over
3}\,\mu_0\,R_{mijl} (\xi_m  {\Phi }^l_{1, \e}+\xi_l  {\Phi }^m_{1,
\e} )\,\del^2_{ij}w_0
\\
&+&\e  \frac23\,\mu_0\, R_{mssj}\,\Phi^m_{1, \e}\,\pa_jw_0+ \e \mu_0
\bigg( (\tilde g^\epsilon)^{ab}\,R_{maaj}- \G_a^c(E_m) \G_c^a(E_j)  \bigg) \,\Phi^m_{1 ,
\e}\,\pa_jw_0. \nonumber
\end{eqnarray}
In \equ{w2epsilon} $\mathfrak{G}_{2,\e}({ \xi}, z, w_0,  w_{1,\e},
\mu_0)$ is the sum of functions of the form
$$
{\mathcal Q} (\mu_0 , \partial_a \mu_0 , \partial^2_a \mu_0 ) a(\ve
z) b(\xi )
$$
where ${\mathcal Q}$ denotes a quadratic function of its arguments,
$a(\ve z)$ is a smooth function uniformly bounded, together with its
derivatives, in $\e$ as $ \e \to 0$, while the function $b$ is such
that
$$
\sup_{\xi } (1+|\xi|^{N-2} ) |b (\xi ) |< \infty.
$$
In \equ{edith1} the term $ \mathcal{E}_{2,\e}$ can be described as
the sum of functions of the form
$$
\left( \ve {\mathcal L} (\mu_1 , \Phi_1 ) + {\mathcal Q} (\mu_1 ,
\Phi_1 ) \right) a(\ve z) b(\xi )
$$
where $(\mu_1 , \Phi_1 ) = (\mu_{1,\e} , \partial_a \mu_{1,\e} ,
\partial^2_a \mu_{1,\e}, \Phi_{1,\e} , \partial_a \Phi_{1,\e} ,
\partial^2_a \Phi_{1,\e} ) $, ${\mathcal L}$ denotes a linear
function of its arguments, ${\mathcal Q}$ denotes a quadratic
function of its arguments, $a(\ve z)$ is a smooth uniformly bounded
function, together with its derivatives, in $\e$ as $ \e \to 0$,
while the function $b$ is such that
$$
\sup_{\xi } (1+|\xi|^{N-2} ) |b (\xi ) |< \infty.
$$
Finally the term $Q_\ve (w_{2,\e} )$ is a sum of quadratic terms in
$w_{2,\e}$ like
$$
-(w_0 + w_{1,\e} + w_{2,\e} )^p + (w_0 + w_{1,\e} )^p  +p (w_0 +
w_{1,\e} )^{p-1}  w_{2,\e}
$$
and linear terms in $w_{2,\e}$ multiplied by a term of order
$\ve$, like
$$
p \left( (w_0 + w_{1,\ve} )^{p-1} - w_0^{p-1} \right) w_{2,\ve}.
$$
We will choose $w_{2,\e}$ to satisfy the following equation
\begin{equation}\label{eq:eqw2}
 \left\{
    \begin{array}{ll}
    - \D w_{2,\e} - p w_0^{p-1} w_{2,\e}+\e\,\mu_0^2 w_{2,\e} =\e\,g_{2,\ve}, & \hbox{ on } \R^N_+ \\
    \frac{\pa w_{2,\e}}{\pa \xi_N} = 0, & \hbox{ on } \{ \xi_N = 0\}.
    \end{array}
  \right.
\end{equation}
Again by Lemma \ref{l:freed}, the above equation is solvable if $g_{2,\ve}$ is $L^2$-orthogonal to $Z_j$, $j=0,
1,\cdots,N-1$. These orthogonality conditions will define the
parameters $\mu_{1,\e}$ and the normal section $\Phi_{1,\e}$.

\noindent $\circ$ {\bf Projection onto $Z_0$ and choice of
$\mu_{1,\e}$:} Recalling that by definition of $\mu_0$
 one has
 \be
 \int_{\R^{N-1} \times \R^+} \bigg[ -
H_{\alpha\alpha} \partial_{\xi_N} w_0 + 2 \xi_N H_{ij} \partial^2_{ij}
    w_0-\mu_0(y)\,w_0\bigg] \, Z_0 \, d\xi =0
 \label{poto}\ee
 and using the fact that
$w_0$ is an even function in $\bar \xi$, we have
\begin{eqnarray*}
 \int_{\R^N_+} g_{2,\ve}Z_0&=&  \mu_0 \int_{\R^N_+} \bigg[-H_{\alpha\alpha} \partial_{\xi_N} w_{1,\e} +2 \xi_N H_{ij} \partial^2_{ij}
    w_{1,\e}\bigg]Z_0d\xi \\
&-&\mu_0\,\mu_{1,\e} \int_{\R^N_+} w_0\,Z_0 d\xi+ \ve \int_{\R^N_+}
\mathfrak{G}_{2,\e}({ \xi}, z, w_0,  \mu_0)Z_0 d\xi.
\end{eqnarray*}
We observe that the term that factors like $\mu_{1,\ve} $  in the definition of $g_{2,\ve}$ in \equ{w2epsilon}  goes away after the integration thanks to relation \equ{poto}. Here and later  $\mathfrak{G}_{i,\e}$ designates a quantity that may change from line to line and which is uniformly bounded in $\e$ depending smoothly on its arguments.

\medskip \noindent Then,
we define $\mu_{1,\e}$  to make the above quantity  zero. The above
relation defines $\mu_{1,\e}$ as a smooth function of $y $ in
$K$. From estimates \equ{ew1} for $w_{1,\e}$ we get that
\begin{equation}
\label{emu1e} \| \mu_{1,\e}\|_{L^\infty (K)} +\| \partial_a
\mu_{1,\e}\|_{L^\infty (K)} +\|\partial^2_a \mu_{1,\e}\|_{L^\infty
(K)} \leq C \ve.
\end{equation}

\noindent $\circ$ {\bf Projection onto $Z_l$ and choice of
$\Phi_{1,\ve}$:} Multiplying $g_{2,\ve}$ with $\pa_l w_0$,
integrating over $\R^{N}_+$ and using the fact $w_0$ is even in the
variable $\ov\xi$, one obtains
\begin{eqnarray}\label{projZl}
(\e \mu_0)^{-1}  \int_{\R^{N}_+}g_{2,\ve}\,\pa_l w_0 &=& - \,\D_K \Phi^j_{1,\e}\,\int_{\R^{N}_+} \pa_jw_0 \pa_lw_0+ \mu_o^{-1}\,\int_{\R^{N}_+}\mathfrak{G}_{2,\e}({ \xi}, z, w_0,  \mu_0,w_{1,\e} )\,\pa_l w_0
\nonumber\\[2mm]
  &-&{1\over 3}\,\,R_{mijs}\int_{\R^{N}_+} (\xi_m  {\Phi }^s_{1,\e}+\xi_s  {\Phi }^m_{1,\e} )\,\del^2_{ij}w_0  \pa_lw_0\\[2mm]
 &+& \,\left[\frac23\, R_{mssj}\,\Phi^m_{1,\e}+  \bigg( (\tilde g^\e)^{ab}\, R_{mabj}- \G_a^c(E_m) \G_c^a(E_j)  \bigg)
\,\Phi^m_{1,\e}\right]\, \int_{\R^{N}_+}\pa_jw_0\,\pa_lw_0.
\nonumber
\end{eqnarray}

First of all, observe that  by oddness in $\ov\xi$ we have that
$$
\int_{\R^{N}_+} \pa_jw_0 \pa_lw_0=\d_{lj}\,C_0\qquad \hbox{with
}\qquad C_0:=\int_{\R^{N}_+} |\pa_1w_0|^2.
$$
On the other hand the integral $\int_{\R^{N}_+} \xi_m
\,\del^2_{ij}w_0  \pa_lw_0$ is non-zero only if, either $i = j$ and
$m = l$, or $i = l$ and $j = m$, or $i = m$ and $j = l$. In the
latter case we have $R_{mijs}=0$ (by the antisymmetry of the
curvature tensor in the first two indices). Therefore, the first
term of the second line of the above formula becomes simply
\begin{eqnarray*}
 {1\over 3}\,\,R_{mijs}\int_{\R^{N}_+}\xi_m  {\Phi }^s_{1,\e}\,\del^2_{ij}w_0  \pa_lw_0&=&
  \frac 13  R_{liis} {\Phi }^s_{1,\e} \int_{\R^{N}_+} \xi_l \partial_{l}
  w_0 \partial^2_{ii} w_0 d \xi \\
  &+& \frac 13   R_{jijs} {\Phi }^s_{1,\e}
  \int_{\R^{N}_+} \xi_j \partial_{i}
  w_0 \partial^2_{ij} w_0 d \xi.
\end{eqnarray*}
Observe that, integrating by parts, when $l \neq i$ (otherwise
$R_{liis}=0$) there holds
$$
  \int_{\R^{N}_+} \xi_l \partial_{l} w_0 \partial^2_{ii}
  w_0 d \xi = - \int_{\R^{N}_+} \xi_l \partial_{i}
  w_0 \partial^2_{li} w_0 d \xi.
$$
Hence, still by the antisymmetry of the curvature tensor we are left
with
$$
  - \frac {2}{3}  \, R_{ijjs}{\Phi }^s_{1,\e}
  \int_{\R^{N}_+} \xi_j \partial_{i}
  w_0 \partial^2_{ij} w_0 d \xi.
$$
The last integral can be computed with a further integration by
parts and is equal to $- \frac 12 C_0$, so we get
$$
    \frac {1}{3}  \, C_0\,\, R_{ijjs} {\Phi }^s_{1,\e}.
$$
In a similar way (permuting the indices $s$ and $m$ in the above
argument), one obtains
$$
 {1\over 3}\,\,R_{sijm}\int_{\R^{N}_+}\xi_s  {\Phi }^m_{1,\e}\,\del^2_{ij}w_0  \pa_lw_0=
    \frac {1}{3}  \, C_0\,\sum_{i} R_{ijjm} {\Phi }^m_{1,\e}.
$$
Collecting the above computations, it holds
\begin{eqnarray*}
-{1\over 3}\,\,R_{mijs}\int_{\R^{N}_+} (\xi_m  {\Phi }^s_{1,\e}
+\xi_s  {\Phi }^m_{1,\e} )\,\del^2_{ij}w_0  \pa_lw_0+
 \frac23\,\, R_{mssj}\,\Phi^m_{1,\e} \int_{\R^{N}_+}\pa_jw_0\,\pa_lw_0=0.
\end{eqnarray*}
Hence Formula \eqref{projZl} becomes simply
\begin{eqnarray*}
 \int_{\R^{N}_+}g_{2,\e}\,\pa_l w_0 &=& -\e \mu_0 \, C_0\,\D_K \,\Phi^l_{1,\e}
 +\e \mu_0\,C_0\,\bigg( (\tilde g^\epsilon)^{ab}R_{maal}- \G_a^c(E_m) \G_c^a(E_l)  \bigg)
\,\Phi^m_{1,\e}\\
&+&\e \int_{\R^{N}_+}\mathfrak{G}_{2,\e}\,\pa_l w_0.
\end{eqnarray*}
We then conclude that $g_{2,\e}(z,\xi,  w_0,  \dots, w_{1,\e})$, the
right-hand side of \eqref{eq:eqw2}, is $L^2$-orthogonal to $Z_l$
($l=1,\cdots,N-1$) if and only if $\Phi_{1,\e}$ satisfies an
equation of the form
\begin{equation}\label{phi1def}
  \D_K \,\Phi^l_{1,\e}-\bigg( (\tilde g^\e)^{ab}\, R_{mabj}- \G_a^c(E_m) \G_c^a(E_l)  \bigg)
\,\Phi^m_{1,\e} = G_{2,\e}(\ve z),
\end{equation}
for some expression $G_{2,\e}$ smooth on its argument. Observe that
the operator acting on $\Phi_{1,\ve}$ in the left hand side is
nothing but the Jacobi operator, see \equ{Jacobinodeg}, which is invertible by the
non-degeneracy condition on $K$. This implies the solvability of the
above equation in $\Phi_{1,\e}$.

Furthermore, equation \equ{phi1def} defines $\Phi_{1, \e}$ as a
smooth function on $K$, of order $\ve$, more precisely we have
\begin{equation}
\label{ePhi1e} \| \Phi_{1,\e}\|_{L^\infty (K)} +\| \partial_a
\Phi_{1,\e}\|_{L^\infty (K)} +\|\partial^2_a \Phi_{1,\e}\|_{L^\infty
(K)} \leq C .
\end{equation}

By our choice of $\mu_{1,\e}$ and $\Phi_{1,\ve}$ we have solvability
of equation \eqref{eq:eqw2} in $w_{2,\e}$. Moreover, it is
straightforward to check that
$$
|\e g_{2,\e} (\e z, \xi ) | \leq C \ve | \partial_{\xi_N} w_{1, \e}|
\leq C {\ve^{3\over2} \over (1+|\xi|)^{N-2}}.
$$
Furthermore, for a given $\sigma \in (0,1)$ we have
$$
\| \e g_{2,\e} \|_{\e, N-2, \sigma} \leq C \ve^{3\over 2}.
$$
Lemma \ref{l:freed} gives then that
\begin{equation}
\label{ew2} \| D^2_\xi w_{2,\e} \|_{\ve , N-2 , \sigma } + \| D_\xi
w_{2,\e} \|_{\ve , N-3 , \sigma } +\|w_{2,\e} \|_{\ve, N-4 ,
\sigma}\le C \ve^{3\over 2}
\end{equation}
and that there exists a positive constant $\beta$ (depending only on
$\Omega, K$ and $n$) such that for any integer $\ell$ there holds
\begin{equation}\label{eq:estw2}
    \|\nabla^{(\ell)}_{z} w_{2,\e}(z,\cdot)\|_{\ve,N-2,\sigma} \leq \beta C_l
    \ve^{3\over 2} \qquad \quad z \in K_\ve
\end{equation}
where $C_l$ depends only on $l$, $p$, $K$ and $\Omega$.

\medskip

\noindent $\bullet$ {\bf Expansion at an arbitrary order:} We take
now an arbitrary integer $I$. Let
\begin{equation}\label{eqmu}
 \mu_\ve:= \mu_0+\mu_{1,\e}+\cdots +\mu_{I-1,\e} + \mu_{I, \ve},
\end{equation}
\begin{equation}\label{eqPhi}
\Phi=\Phi_{1,\e}+\cdots +\Phi_{I-1,\e} + \Phi_{I, \ve}
\end{equation}
and
\begin{equation}
\label{eqWWW} W_{I+1, \ve} = w_0 (\xi ) + w_{1, \ve} (z, \xi) +
\ldots + w_{I, \ve} (z, \xi ) + w_{I+1, \ve} (z, \xi )
\end{equation}
where $\mu_0,  \mu_{1,\e} , \cdots ,  \mu_{I-1,\e}$, $\Phi_{1,\e},
\cdots , \Phi_{I-1,\e}$ and $w_{1, \ve} $, .. , $w_{I, \ve}$ have
already been constructed following an iterative scheme, as described
in the previous steps of the construction.

In particular one has, for any $i=1, \ldots , I-1$
\begin{equation}
\label{emuie} \| \mu_{i,\e}\|_{L^\infty (K)} +\| \partial_a
\mu_{i,\e}\|_{L^\infty (K)} +\|\partial^2_a \mu_{i,\e}\|_{L^\infty
(K)} \leq C \ve^{1+{i-1 \over 2}}
\end{equation}
\begin{equation}
\label{ePhiie} \| \Phi_{i,\e}\|_{L^\infty (K)} +\| \partial_a
\Phi_{i,\e}\|_{L^\infty (K)} +\|\partial^2_a \Phi_{i,\e}\|_{L^\infty
(K)} \leq C \ve^{{i-1 \over 2}}
\end{equation}
and, for now $i=0, \ldots , I-1$,
\begin{equation}
\label{ewi} \| D^2_\xi w_{i+1,\e} \|_{\ve , N-2 , \sigma } + \|
D_\xi w_{i+1,\e} \|_{\ve , N-3 , \sigma } +\|w_{i+1,\e} \|_{\ve, N-4
, \sigma}\le C \ve^{1+{i \over 2}}
\end{equation}
and, for any integer $\ell$
\begin{equation}\label{eq:estwi}
    \|\nabla^{(\ell)}_{z} w_{i+1,\e}(z,\cdot)\|_{\ve,N-2,\sigma} \leq \beta C_l \ve^{1+{i \over 2}},
     \qquad \quad z \in K_\ve.
\end{equation}
The new triplet $(\mu_{I, \ve} , \Phi_{I, \ve} , w_{I+1 , \ve} )$
will be found reasoning as in the construction of the triplet
$(\mu_{1, \ve} , \Phi_{1, \ve} , w_{2 , \ve} )$. Computing $S
(W_{I+1 , \ve })$ (see \equ{Sep}) we get
  \begin{equation}\label{edithI}
  - \Delta w_{I+1,\e} + \e\,\mu_0^2 w_{I+1,\e} -p w_0^{p-1} w_{I+1,\e} = \ve g_{I+1,\ve} +
   \mathcal{E}_{I+1,\e} +Q_\e (w_{I+1, \e} ).
\end{equation}
In \equ{edith1} the function $g_{I+1,\e}$ is given by
\begin{eqnarray}\label{wIepsilon}
g_{I+1,\ve}&=&  \mu_{I,\e}(y)\bigg[-H_{\alpha\alpha}
\partial_{\xi_N} w_0 + 2 \xi_N H_{ij} \partial^2_{ij}
    w_0-2\mu_0w_0\bigg] \nonumber \\
    &+& \mu_0(y)\bigg[-H_{\alpha\alpha} \partial_{\xi_N} w_{I,\e}   +
2 \xi_N H_{ij} \partial^2_{ij}
    w_{I,\e}\bigg] \nonumber \\
    &+&  \ve  \mathfrak{G}_{I+1,\e}({ \xi}, z,
w_0, .., e_{I,\e},  \mu_0, ..., \mu_{I-1,\e}, \Phi_{1,\e}, ... \Phi_{I-1,\e} ) \nonumber\\
&-&  \e \mu_0 \,\D_K \Phi^j_{I, \e}\, \pa_jw_0-{\e \over
3}\,\mu_0\,R_{mijl} (\xi_m  {\Phi }^l_{I, \e}+\xi_l  {\Phi }^m_{I,
\e} )\,\del^2_{ij}w_0
\\
&+& \frac23\,\e \mu_0\, R_{mssj}\,\Phi^m_{I, \e}\,\pa_jw_0+  \mu_0 \e
\bigg( (\tilde g^\e)^{ab}\,R_{mabj}- \G_a^c(E_m) \G_c^a(E_j)  \bigg) \,\Phi^m_{I ,
\e}\,\pa_jw_0. \nonumber
\end{eqnarray}
In \equ{wIepsilon} $\mathfrak{G}_{I+1,\e}({ \xi}, z, )$ is a smooth
function with
\begin{equation}
\label{eedith} \|  \mathfrak{G}_{I+1,\e} \|_{\ve , N-2 , \sigma}
\leq C \ve^{1+ {I \over 2}}.
\end{equation}
In \equ{edithI} the term $ \mathcal{E}_{I+1,\e}$ can be described as
the sum of functions of the form
$$
\left( \ve {\mathcal L} (\mu_{I} , \Phi_{I} ) + {\mathcal Q}
(\mu_{I} , \Phi_{I} ) \right) a(\ve z) b(\xi )
$$
where $(\mu_{I} , \Phi_{I} ) = (\mu_{I,\e} , \partial_a \mu_{I,\e} ,
\partial^2_a \mu_{I,\e}, \Phi_{I,\e} , \partial_a \Phi_{I,\e} ,
\partial^2_a \Phi_{I,\e} ) $, ${\mathcal L}$ denotes a linear
function of its arguments, ${\mathcal Q}$ denotes a quadratic
function of its arguments, $a(\ve z)$ is a smooth function uniformly
bounded, together with its derivatives, in $\e$ as $ \e \to 0$,
while the function $b$ is such that
$$
\sup_{\xi } (1+|\xi|^{N-2} ) |b (\xi ) |< \infty.
$$
Finally the term $Q_\ve (w_{I+1,\e} )$ is a sum of quadratic terms
in $w_{I+1,\e}$ like
$$
(w_0 + w_{1,\e} + \ldots +w_{I+1,\e} )^p - (w_0 + w_{1,\e} +\ldots +
w_{I+1, \e})^p  -p (w_0 + w_{1,\e} +\ldots + w_{I, \e} )^{p-1}
w_{I+1,\e}
$$
and linear terms in $w_{I+1,\e}$ multiplied by a term of order
$\ve^2$, like
$$
p \left( (w_0 + w_{1,\ve} )^{p-1} - w_0^{p-1} \right) w_{I+1,\ve}.
$$

We define $w_{I+1,\e}$ to satisfy the following equation
\begin{equation}\label{eq:eqwI}
 \left\{
    \begin{array}{ll}
    - \D w_{I+1,\e} - p w_0^{p-1} w_{I+1,\e}+\e\,\mu_0^2 w_{I+1,\e} =\e\,g_{I+1,\ve}, & \hbox{ on } \R^N_+ \\
    \frac{\pa w_{I+1,\e}}{\pa \xi_N} = 0, & \hbox{ on } \{ \xi_N = 0\}.
    \end{array}
  \right.
\end{equation}

Again by Lemma \ref{l:freed}, the above equation is solvable if  $g_{I+1,\ve}$ is $L^2$-orthogonal to $Z_j$, $j=0,
1,\cdots,N-1$. These orthogonality conditions will define the
parameters $\mu_{I,\e}$ and the normal section $\Phi_{I,\e}$.

\noindent $\circ$ {\bf Projection onto $Z_0$ and choice of
$\mu_{I,\e}$:} Thanks to the definition of  $\mu_0$
 one has
\begin{eqnarray*}
 \int_{\R^N_+} g_{I+1,\ve}Z_0&=&  \mu_0 \int_{\R^N_+} \bigg[-H_{\alpha\alpha} \partial_{\xi_N} w_{I,\e} + 2 \xi_N H_{ij} \partial^2_{ij}
    w_{I,\e}\bigg]Z_0d\xi \\
&-&\mu_0\,\mu_{I,\e} \int_{\R^N_+} w_0\,Z_0 d\xi+ \e \int_{\R^N_+}
\mathfrak{G}_{I+1,\e}({ \xi}, z)Z_0 d\xi.
\end{eqnarray*}
We define $\mu_{I,\e}$  to make the above quantity  zero. The above
relation defines $\mu_{I,\e}$ as a smooth function of $\e z $ in
$K$. From estimates \equ{ewi} for $w_{I,\e}$ we get that
\begin{equation}
\label{emu1e} \| \mu_{I,\e}\|_{L^\infty (K)} +\| \partial_a
\mu_{I,\e}\|_{L^\infty (K)} +\|\partial^2_a \mu_{I,\e}\|_{L^\infty
(K)} \leq C \ve^{ 1+ {I-1 \over 2}}.
\end{equation}

\noindent $\triangleright$ {\bf Projection onto $Z_l$ and choice of
$\Phi_{I,\ve}$:} Multiplying $g_{I+1,\ve}$ with $\pa_l w_0$,
integrating over $\R^{N}_+$ and arguing as in the construction of
$\Phi_{1, \ve}$, we get
\begin{eqnarray*}
 \int_{\R^{N}_+}g_{I+1,\e}\,\pa_l w_0 &=& -\e \mu_0 \, \D_K \,\Phi^l_{I,\e}
 +\e \mu_0 \,\bigg( (\tilde g^\e)^{ab}\,R_{mabl}- \G_a^c(E_m) \G_c^a(E_l)  \bigg)
\,\Phi^m_{I,\e}\\
&+& \ve \int_{\R^{N}_+}\mathfrak{G}_{I+1,\e}\,\pa_l
w_0.
\end{eqnarray*}
We then conclude that $g_{2,\e}(z,\xi,  w_0,  \dots, w_{I,\e})$, the
right-hand side of \eqref{eq:eqwI}, is $L^2$-orthogonal to $Z_l$
($l=1,\cdots,N-1$) if and only if $\Phi_{I,\e}$ satisfies an
equation of the form
\begin{equation}\label{phi1def}
  \D_K \,\Phi^l_{I,\e}-\bigg( (\tilde g^\e)^{ab}\,R_{mabl}- \G_a^c(E_m) \G_c^a(E_l)  \bigg)
\,\Phi^m_{I,\e} =G_{I+1,\e}(\ve z),
\end{equation}
where $G_{I+1,\e}$ is a smooth function on $K$. Using again the
non-degeneracy condition on $K$ we have solvability of the above
equation in $\Phi_{I,\e}$. Furthermore,  taking into account
\equ{eedith}, we get
\begin{equation}
\label{ePhiIe} \| \Phi_{I,\e}\|_{L^\infty (K)} +\| \partial_a
\Phi_{I,\e}\|_{L^\infty (K)} +\|\partial^2_a \Phi_{I,\e}\|_{L^\infty
(K)} \leq C \ve^{1+{I-1\over 2}}.
\end{equation}

By our choice of $\mu_{I+1,\e}$ and $\Phi_{I+1,\ve}$ we have
solvability of equation \eqref{eq:eqwI} in $w_{I+1,\e}$. Moreover,
it is straightforward to check that
$$
|\e g_{I+1,\e} (\e z, \xi ) | \leq C {\ve^{1+{I\over 2}} \over
(1+|\xi|)^{N-2}} .
$$
Furthermore, for a given $\sigma \in (0,1)$ we have
$$
\| \e g_{I+1,\e} \|_{\e, N-2, \sigma} \leq C \ve^{1+{I\over 2}}.
$$
Lemma \ref{l:freed} gives then that
\begin{equation}
\label{ewI} \| D^2_\xi w_{I+1,\e} \|_{\ve , N-2 , \sigma } + \|
D_\xi w_{I+1,\e} \|_{\ve , N-3 , \sigma } +\|w_{I+1,\e} \|_{\ve, N-4
, \sigma}\le C \ve^{1+{I\over 2}}
\end{equation}
and that there exists a positive constant $\beta$ (depending only on
$\Omega, K$ and $n$) such that for any integer $\ell$ there holds
\begin{equation}\label{eq:estw2}
    \|\nabla^{(\ell)}_{z} w_{I+1,\e}(z,\cdot)\|_{\ve,N-2,\sigma} \leq \beta C_l
    \ve^{1+{I\over 2}} \qquad \quad z \in K_\ve.
\end{equation}

\medskip
This concludes our construction and have the validity of Lemma \ref{Construction}.

\medskip

\setcounter{equation}{0}
\section{A global approximation and Expansion of a quadratic functional}\label{s:linear}
\smallskip

Let $\mu_\e (y)  $, $\Phi_\e (y)$ and $W_{I+1 , \e}$ be the functions whose existence and
properties have been established in Lemma \ref{Construction}. We
define locally around $K_\e := {K\over \e} \subset \partial
\Omega_\e$ in $\Omega_\e$ the function
$$
V_\e (z, X):= \, \mu_{\e}^{-{N-2 \over 2}} (\e z) \, W_{I+1 , \e}
\left( z, \, \mu_\e^{-1} (\e z) (\bar X - \, \Phi_\e (\e z) ),
 \, \mu_\e^{-1} (\e z) X_N \right) \times
 $$
 \begin{equation} \label{Vdef}
 \chi_\e (|(\bar X - \, \Phi_\e (\e z), X_N )|)
\end{equation}
where $z \in K_\e$. In \equ{Vdef} the function $\chi_\e$
is a smooth cut-off function with
\begin{equation}
\label{magaly}
 \chi_\e (r) =
\left\{
 \begin{array}{ll}
     1, & \hbox{for} \quad r \in [0,2 \e^{-\gamma} ] \\[3mm]
    0, & \hbox{for} \quad  r \in [3 \e^{-\gamma} , 4\e^{-\gamma} ],
  \end{array}
\right.
\end{equation}
and
$$
|\chi_\e^{(l)} (r) | \leq C_l \e^{l \gamma}, \quad \foral l\geq 1,
$$
for some $\gamma \in ({1\over 2} , 1)$ to be fixed later.

The function $V_\e$ is well defined in a small neighborhood of
$K_\e$ inside $\Omega_\e$. We will look at a solution to \equ{eq:pe}
of the form
$$
v_\e = V_\e +\phi.
$$
This translates into the fact that $\phi$ has to satisfy the non
linear problem
\begin{equation}\label{nonlinearproblem}
  \begin{cases}
    -\Delta \phi + \ve \phi- p V_\e^{p-1} \phi =S_\e (V_\e) + N_\e (\phi) & \text{ in } \O_\ve, \\
    \frac{\del \phi}{\pa \nu} = 0 & \text{ on } \partial \O_\ve,
  \end{cases}
\end{equation}
where
\begin{equation}
\label{eomegaeps} S_\e (V_\e )= \Delta V_\e - \e V_\e +
V_\e^p
\end{equation}
and
\begin{equation}
\label{Nomegaeps} N_\e (\phi) =  (V_\e + \phi )^p - V_\e^p - p
V_\e^{p-1} \phi.
\end{equation}
Define
$$
L_\e (\phi) = -\Delta \phi + \ve \phi - p V_\e^{p-1} \phi.
$$
Our strategy consists in solving the Non-Linear Problem
\equ{nonlinearproblem} using a fixed point argument based on the
contraction Mapping Principle. To do so, we need to establish some
invertibility properties of the linear problem
$$
L_\e (\phi) = f \quad {\mbox {in}} \quad \Omega_\ve , \quad
{\partial \phi \over \partial \nu} =
 0 \quad {\mbox {on}} \quad \partial \Omega_\e,
$$
with $f\in L^2 (\Omega_\ve )$. We do this  in two steps. First
we  study  the above problem in a strip close to the scaled
manifold $K_\e= {K\over \e}$ in $\partial \Omega_\e$. Then we
establish a complete theory for the problem in the whole domain
$\Omega_\e$: this is done in Section \ref{proof}.

Let $\gamma \in ({1\over 2} , 1)$ be the number fixed before in
\equ{magaly} and consider
\begin{equation}
\label{omegaepsilongamma} \Omega_{\e, \gamma} := \{ x \in \Omega_\e
\, : \, {\mbox {dist}} (x,K_\e ) <2 \e^{-\gamma} \}.
\end{equation}
We are first interested in solving the following problem: given $f
\in L^2 (\Omega_{\e , \gamma})$
\begin{equation} \label{lineare}
  \begin{cases}
    -\Delta \phi + \ve \phi- p V_\e^{p-1} \phi =f & \text{ in } \O_{\ve , \gamma}, \\
    \frac{\del \phi}{\pa \nu} = 0 & \text{ on } \partial \O_\ve \bigcap \bar \Omega_{\e, \gamma},
    \\ \phi = 0 & \text{ in } \partial \O_{\ve, \gamma} \setminus \partial
\Omega_\e.
  \end{cases}
\end{equation}

Observe that in the region we are considering the function $V_\ve$
is nothing but $V_\ve= {\mathcal T}_{\, \mu_\e , \, \Phi_\e}
(W_{I+1 , \e})$, where $W_{I+1, \e}$ is the function whose existence
and properties are proven in Lemma \ref{Construction}. For the
argument in this part of our proof it is enough to take $I=3$, and
for simplicity of notation we will denote by $\hat w$ the function
$W_{I+1 , \e}$ with $I=3$. Referring to \equ{vIe} we have
\begin{equation}
\label{hatw} \hat w (z, \xi) = w_0 (\xi) + \sum_{i=1}^4 w_{i,\e} (z,
\xi)
\end{equation}
where $w_0$ is defined by \equ{w00} and
\begin{equation}
\label{hatw1} \| D^2_\xi w_{i+1,\e} \|_{\ve , N-2 , \sigma } + \|
D_\xi w_{i+1,\e} \|_{\ve , N-3 , \sigma } +\|w_{i+1,\e} \|_{\ve, N-4
, \sigma}\le C \ve^{1+{i \over 2}}
\end{equation}
and, for any integer $\ell$
$$
    \|\nabla^{(\ell)}_{z} w_{i+1,\e}(z,\cdot)\|_{\ve,N-2,\sigma} \leq \beta C_l \ve^{1+{i \over 2}}
     \qquad \quad z \in K_\ve
$$
for any $i=0, 1, 2, 3$.

We will establish a solvability theory for Problem \equ{lineare} in Section \ref{eigenvalues}. For the moment, we devote the rest of this Section to expand
the quadratic functional associated to \equ{lineare}.

Define
\begin{equation}
\label{defH1e} H^1_\e = \{ u \in H^1 (\O_{\e , \gamma} ) \, : \,
u(x) = 0 \quad {\mbox {for}} \quad x \in \partial \O_{\ve, \gamma}
\setminus \partial \Omega_\e \}
\end{equation}
and the quadratic functional
given by
\begin{equation}
\label{functional1} E(\phi ) = {1\over 2} \int_{\O_{\e,\gamma}}
(|\nabla \phi |^2 + \e \phi^2 - p V_\e^{p-1} \phi^2 )
\end{equation}
for functions $\phi \in H^1_\e $.

Let
$(z,X)\in \R^{k+N}$ be the local coordinates along $K_\e$ introduced
in \equ{eq:feee}, with abuse of notation we will denote
\begin{equation}
\label{bb}
  \phi (\U_\e (z,X))= \phi(z, X ).
\end{equation}

Since the original variable $(z, X)\in \R^{k+N}$ (see \equ{eq:feee})
are only local coordinates along $K_\e$ we let the variable $(z, X)$
vary in the set $\CC_\e$ defined by
\begin{equation}\label{ddomain}
\CC_\e = \{ (z,\bar X, X_N) \ / \ \e z\in \ K,\quad  0< X_N <
\e^{-\gamma} , \quad  |\bar X|  <  \e^{-\gamma} \}.
\end{equation}
We write $\CC_\e=\frac1\e K\times\hat \CC_\e$ where
\begin{equation}\label{dddomain}
\hat \CC_\e = \{ (\bar X, X_N) \ / \ 0 < X_N < \e^{-\gamma} , \quad
|\bar X|  <  \e^{-\gamma} \}.
\end{equation}
Observe  that $\hat \CC_\e$ approaches, as $\ve \to 0$, the half
space $\R_+^N$.

In these new local coordinates,  the energy density associated to
the energy $E$ in \equ{functional1} is given by
\begin{equation} \label{e}
\left[\frac12\left(|\nabla_{g^\e} \phi|^2+\e \phi^2 -p V_\e^{p-1}
\phi^2 \right)  \right] \sqrt{\det(g^\e)},
\end{equation}
where $\nabla_{g^\e}$ denotes the gradient in the new variables and
where $g^\e$ is the flat metric in $\R^{N+k}$ in the coordinates
$(z, X)$.

Having the expansion of the metric coefficients, see Lemma \ref{metricepsilon}, we are in a
position to expand the energy  \equ{functional1} in the new variable
$(z,X)$. Precisely we have the following

\begin{lemma}\label{le5.1} Let $(y,x)\in \R^{k+N}$ be the local coordinates along the submanifold $K$ introduced
in \equ{eq:fe}, let $(z,X)$ be the expanded variables introduced in
\equ{bb}. Assume that $(z,X)$ vary in $\CC_\e$ (see \equ{ddomain}).
Then, the energy functional \equ{functional1} in the new variables
\equ{bb} is given by
\begin{eqnarray}\label{energydensity}
E  ( \phi) & = &\int_{K_\e \times \hat \CC_\e} \left(\frac12 (
|\nabla_X \phi|^2 +\e \phi^2 - p V_\e^{p-1} \phi^2 )  \right)
\sqrt{\det(g^\e)} \, dz \, dX\nonumber
\\
&& +\int_{K_\e \times \hat \CC_\e} \frac12\,\Xi_{ij} (\e z ,
X)\,\partial_{i}\phi\partial_{j}\phi\,\sqrt{\det(g^\e)} \, dz \, dX\\
 &&
 +\frac12 \int_{K_\e \times \hat \CC_\e} |\nabla_{K_\e}\phi|^2\,\sqrt{\det(g^\e)} \, dz \, dX+
 \int_{K_\e \times \hat \CC_\e} B(\phi,\phi)\,\sqrt{\det(g^\e)} \, dz \, dX. \nonumber
\end{eqnarray}
In the above expression, we have
\begin{equation}\label{eqXiij}
\Xi_{ij} (\e z , X) =2 \e H_{ij} X_N-\frac{\e^2}3\,R_{islj}X_l X_s,
\end{equation}
we denoted by $B(\phi,\phi)$ a quadratic term in $\phi$ that can be
expressed in the following form
\begin{eqnarray}
 B(\phi,\phi)&=& O \left( \e^2 X_N^2 + \e^3 | \bar X  |^3 +  \e^3 X_N |\bar X|^2
 + \ve^3 X_N^2 | \bar X | \right)\del_{i} \phi \del_{j} \phi \nonumber\\
\nonumber\\
&&+{\e^2}\,|\nabla_{K_\e}\phi|^2\,
  O(\e |X|) +\pa_j \phi \pa_{\bar a}\phi \left(\mathcal{O}(\ve | \bar X
|+\ve^2 X_N^2)\right). \label{defBB}
\end{eqnarray}
and we used the Einstein convention over repeated indices.
Furthermore we use the notation $\partial_a =
\partial_{y_a} $ and $\pa_{\bar a} = \pa_{z_a}$.

\end{lemma}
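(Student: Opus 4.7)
The plan is to pull the energy $E(\phi)$ back to the scaled Fermi coordinates $(z,X)$ through the parametrization $\Upsilon_\e$ of \equ{eq:feee} and then to substitute the metric expansion already computed in Lemma \ref{metricepsilon}. Since $\Upsilon_\e$ is a diffeomorphism from $K_\e\times\hat\CC_\e$ onto a tubular neighborhood of $K_\e$ in $\Omega_\e$, under this change of variables the Euclidean volume element becomes $\sqrt{\det g^\e}\,dz\,dX$ and
\[
|\nabla\phi|^2 \;=\; g^{\e,\alpha\beta}(z,X)\,\partial_\alpha\phi\,\partial_\beta\phi,
\]
where $g^{\e,\alpha\beta}$ denotes the entries of the inverse matrix $(g^\e)^{-1}$; the scalar terms $\e\phi^2$ and $-p V_\e^{p-1}\phi^2$ simply pick up the Jacobian factor.

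The main computation is thus the expansion of $g^{\e,\alpha\beta}$. Writing $g^\e = G + M$ with $G$ the block-diagonal matrix $\tilde g^\e(\e z)\oplus\mathrm{Id}_{\R^N}$ and $\|M\|=O(\e|X|)$, the Neumann series
\[
(g^\e)^{-1} \;=\; G^{-1} - G^{-1}MG^{-1} + G^{-1}MG^{-1}MG^{-1} + O(\|M\|^3)
\]
gives exactly the components recorded in the proof of Lemma \ref{l:explaplacian}: $g^{\e,NN}\equiv 1$, $g^{\e,aN}\equiv g^{\e,iN}\equiv 0$, together with
\[
g^{\e,ij} \;=\; \delta_{ij} + 2\e X_N H_{ij} - \tfrac{\e^2}{3}\,R_{istj} X_s X_t + 3\e^2 X_N^2 (H^2)_{ij} + O(\e^3|X|^3),
\]
\[
g^{\e,aj} \;=\; \e X_N\bigl(H_{aj} + \tilde g^{ac} H_{cj}\bigr) + O(\e^2|X|^2), \qquad g^{\e,ab} \;=\; \tilde g^{ab}(\e z) + O(\e|X|).
\]

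Substituting these into the Dirichlet integrand and regrouping by order in $\e$ produces exactly the four terms in \equ{energydensity}. The $(N,N)$ contribution $(\partial_N\phi)^2$ and the leading $\delta_{ij}\partial_i\phi\partial_j\phi$ from $g^{\e,ij}$ combine to $|\nabla_X\phi|^2$. The order-$\e$ and order-$\e^2$ pieces of $g^{\e,ij}$ that are linear in $X_N$ or quadratic in $X_s$ form precisely the expression $\Xi_{ij}(\e z,X)=2\e X_N H_{ij}-\tfrac{\e^2}{3}R_{istj}X_s X_t$ of \equ{eqXiij}, generating the second integral in \equ{energydensity}. The leftover $3\e^2 X_N^2(H^2)_{ij}\partial_i\phi\partial_j\phi$ and the $O(\e^3|X|^3)$ remainder are placed into $B(\phi,\phi)$, producing the first line of \equ{defBB}. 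The tangential block yields $\tilde g^{ab}(\e z)\partial_{\bar a}\phi\partial_{\bar b}\phi$; its Euclidean leading part $\partial_{\bar a}\phi\partial_{\bar a}\phi$ is the third integral in \equ{energydensity}, while the deviation of $\tilde g^{ab}$ from $\delta_{ab}$ together with the $O(\e|X|)$ correction on $g^{\e,ab}$, rewritten via $\partial_a=\e^{-1}\partial_{\bar a}$, contributes the $\e^2 O(\e|X|)\partial_a\phi\partial_a\phi$ piece of $B(\phi,\phi)$. Finally the off-diagonal block $2 g^{\e,aj}\partial_{\bar a}\phi\partial_j\phi$ together with the higher-order correction from $g^{\e,aj}$ gives the mixed term $\partial_j\phi\partial_{\bar a}\phi\bigl(O(\e|\bar X|)+O(\e^2 X_N^2)\bigr)$ of \equ{defBB}.

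The single step requiring real attention is verifying that every piece assigned to $B(\phi,\phi)$ is uniformly of lower order in $K_\e\times\hat\CC_\e$: there $|X|\leq \e^{-\gamma}$ with $\gamma\in(\tfrac12,1)$, so each monomial $\e^j|X|^m$ with $j>m\gamma$ is indeed small as $\e\to 0$, which is the structure shared by all remainder terms in the metric expansion above. Granted this routine bookkeeping, formula \equ{energydensity} follows by direct assembly of the pieces. The main obstacle is not analytic but combinatorial, namely keeping track of which order-$\e$ contributions are retained in explicit form (those that will drive the spectral analysis in the next section) and which can safely be buried inside the quadratic remainder $B(\phi,\phi)$.
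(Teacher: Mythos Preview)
Your proposal is correct and follows essentially the same route as the paper's own proof: both expand $|\nabla_{g^\e}\phi|^2 = g^{\e,\alpha\beta}\partial_\alpha\phi\,\partial_\beta\phi$ block by block using the inverse-metric expansions already recorded (in the proof of Lemma \ref{l:explaplacian}), identify the leading $\delta_{ij}$ and $\Xi_{ij}$ pieces of the $(i,j)$-block, the leading tangential piece, and the mixed $(a,j)$-block, and then collect the higher-order residues into $B(\phi,\phi)$. The paper's argument is in fact terser than yours---it simply writes out the resulting formula for $|\nabla_g\phi|^2$ and cites the $\sqrt{\det g^\e}$ expansion---so your additional bookkeeping (the Neumann series, the explicit check that each remainder is small on $\hat\CC_\e$) only adds clarity and does not diverge from the intended method.
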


\begin{proof}[Proof of Lemma \ref{le5.1}]
Our aim is to expand
$$
\int \left[\frac12\left(|\nabla_{g^\e} \phi|^2+\e \phi^2 -p
V_\e^{p-1} \phi^2 \right)  \right] \sqrt{\det(g^\e)}.
$$
For simplicity we will omit the $\e$ in the notation of $g^\e$.
Recalling our notation about repeated indices, we write $|\nabla_{g^\e}
\phi |^2$ as
\begin{eqnarray*}
|\nabla_{g^\e}\phi |^2
&=&(g^\e)^{NN}\pa_N \phi\pa_N \phi+(g^\e)^{ab}\pa_a \phi\pa_b \phi+(g^\e)^{ij}\pa_i
\phi\pa_j \phi+2(g^\e)^{aj}\pa_a \phi\pa_j \phi,
\end{eqnarray*}
where $(g^\e)^{\a \b}$ represent the coefficients of the inverse of the
metric $g^\e = (g^\e_{\alpha \beta})$. Using the expansion of the
metric in Lemma \ref{metricepsilon}, we see that
\begin{eqnarray*}
|\nabla_{g^\e}\phi |^2&=& |\partial_N \phi|^2 + |\partial_i \phi|^2 + (
2\e H_{ij} X_N  -{\e^2 \over 3} R_{islj} X_s X_l ) \pa_i \phi \pa_j
\phi +\pa_{\bar a} \phi \pa_{\bar a}\phi (1+ \e |X|)
\\
&+& O\left(\e^2 X_N^2 + \e^3 |X|^3 \right) \pa_i \phi \pa_j \phi+ O(\e X_N + \e^2
O(|X|^2 )) \pa_{\bar a }\phi \, \pa_i \phi.
\end{eqnarray*}
This together with the expansion of $\sqrt{{\mbox {det}} g}$ given
in Lemma \ref{metricepsilon}, prove Lemma \ref{le5.1}.
\end{proof}

Given a function $\phi \in H^1_\e$ (see \equ{defH1e}), we write it
as
\begin{equation}\label{decomp}
\phi= \left[{\delta \over \, \mu_\e} {\mathcal T}_{\, \mu_\e ,
\, \Phi_\e} ( Z_0) + \sum_{j=1}^{N-1} {d^j \over \, \mu_\e}
{\mathcal T}_{\, \mu_\e , \, \Phi_\e} ( Z_j)
 + {e \over \, \mu_\e}   {\mathcal T}_{\, \mu_\e , \, \Phi_\e} (Z) \right] \bar \chi_\ve   + \phi^\bot
\end{equation}
where the expression ${\mathcal T}_{\, \mu_\e , \, \Phi_\e} (
v)$ is defined in \eqref{defTT}, the functions $Z_0$ and $Z_j$ are
already defined in \equ{lezetas} and where $Z$ is the eigenfunction, with $\int_{\R^N} Z^2 =1$,
corresponding to the unique positive eigenvalue $\la_0$ in $L^2
(\R^N)$ of the problem
\begin{equation}
\label{lambda0} \Delta_{\R^N} \phi + p w_0^{p-1} \phi = \la_0 \phi
\quad {\mbox {in}} \quad \R^N.
\end{equation}
It is worth mentioning that $Z (\xi )$ is even and it has
exponential decay of order $O(e^{-\sqrt{\la_0} |\xi|} )$ at
infinity. The function $\bar \chi_\e$ is a smooth cut off function
defined by
\begin{equation}
\label{chibar} \bar \chi_\e (X) = \hat \chi_\e \left(  \left |\left ({\bar X -
\Phi_\e \over \, \mu_\e}, {X_N \over \, \mu_\e}\right ) \right | \right ),
\end{equation}
with $\hat \chi(r) = 1$ for $r \in (0,{3\over 2} \e^{-\gamma} )$,
and $\chi(r)=0$ for $r>2\e^{-\gamma}$. Finally, in \equ{decomp} we
have that $\delta = \delta (\e z)$, $d^j = d^j (\e z)$ and $e= e(\e
z)$ are function defined in $K$ such that $\foral z\in K_\e$
\begin{equation}
\label{orth1} \int_{\hat{\mathcal{C}}_\e} \phi^\bot {\mathcal
T}_{\, \mu_\e , \, \Phi_\e} (Z_0) \bar \chi_\e d X =
\int_{\hat{\mathcal{C}}_\e} \phi^\bot {\mathcal T}_{\, \mu_\e ,
\, \Phi_\e} (Z_j) \bar \chi_\e = \int_{\hat{\mathcal{C}}_\e}
\phi^\bot {\mathcal T}_{\, \mu_\e , \, \Phi_\e} (Z) \bar
\chi_\e=0
\end{equation}
We will denote by $(H_\e^1)^\bot$ the subspace of the functions in
$H_\e^1$ that satisfy the orthogonality conditions (\ref{orth1}).

A direct computation shows that
$$
\delta (\e z) = {\int \phi  {\mathcal T}_{\, \mu_\e ,
\Phi_\e} (Z_0) \over \, \mu_\e \int Z_0^2} (1+ O(\e^2)) + O(\e^2)
(\sum_j d^j (\e z) + e (\e z)), \quad
$$
$$
d^j (\e z) = {\int \phi  {\mathcal T}_{\, \mu_\e , \, \Phi_\e}
(Z_j) \over \, \mu_\e \int Z_j^2} (1+ O(\e^2))  + O(\e^2) (\delta
(\e z) + \sum_{i\not= j}  d^i (\e z) + e (\e z)),
$$
and
$$
e(\e z) = {\int \phi  {\mathcal T}_{\, \mu_\e , \, \Phi_\e} (Z)
\over \, \mu_\e \int Z^2} (1+ O(\e^2))  + O(\e^2) (\delta (\e z)
+\sum_j d^j (\e z) ).
$$
Observe that, since $\phi \in H_\e^1$, one easily get that the
functions $\delta $, $d^j$ and $e$ belong to the Hilbert space
\begin{equation}\label{H1K}
{\mathcal H}^1 (K) = \{ \zeta \in {\mathcal L}^2 (K) \, : \,
\partial_a \zeta \in {\mathcal L}^2 (K),\quad a=1,\cdots,k \}.
\end{equation}

Thanks to the above decomposition \equ{decomp}, we have the validity
of the following expansion for $E(\phi)$.

\begin{theorem} \label{teo4.1} Let $\gamma= 1-\sigma$, for some $\sigma >0$ and small.
Assume we write $\phi \in H^1_\e$ as in \equ{decomp} and let $d=
(d^1 , \ldots , d^{N-1})$. Then, there exists $\e_0>0$ such that,
for all  $0<\e <\e_0$, the following expansion holds true
\begin{equation}
\label{EEE} E(\phi ) = E(\phi^\bot ) + \e^{-k} \left[ P_\e (\delta )
+Q_\e (d ) + R_\e (e)  \right]  +{\mathcal M} (\phi^\bot , \delta ,
d , e).
\end{equation}
In \equ{EEE}
\begin{equation}\label{q0e}
P_\e (\delta ) = P (\delta ) + P_1 (\delta)
\end{equation}
with
\begin{equation}
\label{q0} P (\delta )  =\left[ {A_\e \over 2} \int_K \e^2
|\partial_a ( \delta (1 + o(\e^2)  \beta_1^\e (y) ) )|^2 +\e {B\over
2} \int_K \delta^2 \right]
\end{equation}
with $A_\e$ a real number such that $\lim_{\e \to 0 } A_\e = A:=
\int_{\R^N_+} Z_0^2 $, $B= -\int_{\R^N_+} w_0 Z_0 >0$ and
$\beta_1^\e$ is an explicit smooth function defined on $K$ which is
uniformly bounded as $\e \to 0$; furthermore,  $P_1 (\delta ) $ is a
small compact perturbation in ${\mathcal H}^1 (K) $ whose shape is a
sum of quadratic functional in $\delta$ of the form
$$
 \ve^{2} \int_K b(y) |\delta |^2
$$
where $b(y)$ denotes a generic explicit function, smooth and
uniformly bounded, as $\e \to 0$, in $K$. In \equ{EEE},
\begin{equation}
\label{qje} Q_\e (d ) = Q(d ) + Q_1 (d)
\end{equation}
with
\begin{equation} \label{Qj}
Q (d)=  {\e^2 \over 2} C_\e \left( \int_K |\partial_a ( d (1+ o(\e^2
) \beta_2^\e (y)  ) )|^2 + \int_K ((\tilde g^\e)^{ab}R_{mabl} - \Gamma_a^c (E_m)
\Gamma_c^a (E_l) ) d^m d^l \right)
\end{equation}
where $C_\e$ is a real number such that $\lim_{\e \to 0} C_\e = C:=
\int_{\R^N_+} Z_1^2$, $\beta_2^\e$ is an explicit smooth function
defined on $K$ which is uniformly bounded as $\e \to 0$ and the
terms $R_{maal} $ and $\Gamma_a^c (E_m)$ are smooth functions on $K$
defined respectively in \equ{ctens} and \equ{eq:Gab}. Furthermore,
$Q_1 (d)$ is a small compact perturbation  in ${\mathcal H}^1 (K) $
whose shape is a sum of quadratic functional in $d$ of the form
$$
\e^{3} \int_K b(y) d^i d^j
$$
where again   $b(y)$ is a generic explicit function, smooth and
uniformly bounded, as $\e \to 0$, in $K$. In \equ{EEE},
\begin{equation}
\label{qe0} R_\e (e) = R(e) + R_1 (e)
\end{equation}
\begin{equation}
\label{Q} R(e) = \e^{-k} \left[ {D_\e \over 2} \left( \ve^2 \int_K
|\partial_a ( e (1+ e^{-{\lambda_0 \over 2} \e^{-\gamma}} \beta_3^\e
(y)  ) )|^2 -\la_0  \int_K  e^2 \right)\right]
\end{equation}
with $D_\e$ a real number so that  $\lim_{\e \to 0 } D_\e = D:=
\int_{\R^N_+} Z^2 $, $\beta_3^\e$ an explicit smooth function in
$K$,
 which is uniformly bounded as $\e \to 0$,  and $\la_0$ the positive number defined in \equ{lambda0}. Furthermore, $R_1$
 is a small compact perturbation  in ${\mathcal H}^1 (K) $ whose shape is a sum of quadratic functional in $e$ of the form
$$
\e^{3} \int_K b(y) e^2
$$
where again   $b(y)$ is a generic explicit function, smooth and
uniformly bounded, as $\e \to 0$, in $K$. Finally in \equ{EEE}
$$
{\mathcal M} : (H^1_\e )^\bot \times ({\mathcal H}^1 (K) )^{N+1} \to
\R
$$
is a continuous and differentiable functional with respect to the
natural topologies, homogeneous of degree $2$
$$
{\mathcal M} (t \phi^\bot , t \delta , t d , t e ) = t^2 {\mathcal
M} ( \phi^\bot , \delta ,  d ,  e ) \quad \foral t.
$$
The derivative of ${\mathcal M}$ with respect to each one of its
variable is given by a small multiple of a linear operator in
$(\phi^\bot , \delta , d , e)$ and it satisfies
$$
\| D_{(\phi^\bot , \delta , d)} {\mathcal M}(\phi_1^\bot , \delta_1
, d_1 , e_1) - D_{(\phi^\bot , \delta , d)} {\mathcal M}(\phi_2^\bot ,
\delta_2 , d_2 , e_2) \| \leq C \e^{\gamma (N-3)} \times
$$
\begin{equation}
\label{lips} \left[ \| \phi_1^\bot - \phi_2^\bot \| + \e^{-k} \|
\delta_1 - \delta_2 \|_{{\mathcal H}^1 (K)} + \e^{-k} \| d_1 - d_2
\|_{({\mathcal H}^1 (K))^{N-1}} + \e^{-k} \| e_1 - e_2 \|_{{\mathcal
H}^1 (K)}\right].
\end{equation}
 Furthermore, there exists a constant $C>0$ such that
\begin{equation}
\label{estMM} \left| {\mathcal M } ( \phi^\bot , \delta ,  d ,  e )
\right| \leq C \ve^{2} \left[ \| \phi^\bot \|^2 + \ve^{-2k} \left( \|
\delta \|_{{\mathcal H}^1 (K)}^2  + \| d \|_{{\mathcal H}^1 (K)}^2 +
\| e\|_{{\mathcal H}^1 (K)}^2   \right) \right].
\end{equation}
\end{theorem}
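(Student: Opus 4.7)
The plan is to substitute the decomposition \eqref{decomp} into the expansion of $E(\phi)$ furnished by Lemma \ref{le5.1} and to isolate, for each of the three slow amplitudes $\delta$, $d$, $e$, a diagonal quadratic form plus a small off-diagonal remainder $\mathcal{M}$. The three ``kernel'' profiles $\mathcal{T}_{\mu_\e,\Phi_\e}(Z_0)$, $\mathcal{T}_{\mu_\e,\Phi_\e}(Z_j)$ and $\mathcal{T}_{\mu_\e,\Phi_\e}(Z)$ are mutually $L^2$-orthogonal in $X$, and each satisfies an exact identity under the model linear operator: $(-\Delta_X-pw_0^{p-1})Z_j=0$ for $j=0,\dots,N-1$, and $(-\Delta_X-pw_0^{p-1})Z=-\lambda_0Z$. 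These two facts drive essentially every cancellation in the proof.

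Changing variables to $\xi=\mu_\e^{-1}(\bar X-\Phi_\e,X_N)$ and using those identities, the $X$-kinetic plus potential part of $E(\phi_\delta)$ cancels at leading order, leaving the slow tangential kinetic term, whose coefficient after integrating the rescaled profile is $A_\e\to\int_{\R^N_+}Z_0^2$, plus the mass term $\e\int\phi_\delta^2$. The latter reduces, via the identity $\int_{\R^N_+}w_0Z_0=-\int_{\R^N_+}w_0^2$ already proved in the construction of $\mu_0$, to $\tfrac{\e B}{2}\int_K\delta^2$, yielding precisely $P(\delta)$ in \eqref{q0}; the perturbation $P_1(\delta)$ collects the $O(\e^2)$ corrections that enter through the $y$-dependence of $\mu_\e,\Phi_\e$ and of the cut-off $\bar\chi_\e$, controlled by \eqref{bf2}--\eqref{bf3}. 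For the $d$-mode the $X$-part cancels likewise, but now the tangential Laplacian $\partial_{\bar a}\partial_{\bar a}$, together with the curvature term $\Xi_{ij}$ and the oddness of $Z_j$ in $\bar\xi$, assemble after several integrations by parts into the bilinear form of the Jacobi operator $\mathfrak{J}$; this is exactly the calculation performed in the construction of $\Phi_{1,\e}$ in \eqref{phi1def}, now carried out at the variational level, and it produces $Q(d)$ in view of \eqref{Jacobinodeg}. For the $e$-mode, the eigenvalue identity delivers directly the negative contribution $-\lambda_0D_\e\int_Ke^2$, while the $\bar\chi_\e$-truncation of the exponentially decaying $Z$ produces only an error of order $e^{-\sqrt{\lambda_0}\e^{-\gamma}/2}$, accounting for the $\beta_3^\e$ factor in \eqref{Q}.

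For the off-diagonal remainder $\mathcal{M}$ I would separate contributions (i) between two distinct kernel modes and (ii) between a kernel mode and $\phi^\perp$. In case (i) the $L^2$-orthogonality of $\{Z_0,\dots,Z_{N-1},Z\}$ and the parity of $Z_j$ versus $Z_0,Z$ in $\bar\xi$ annihilate the leading piece, so that only $O(\e^2)$ metric and cut-off perturbations survive. In case (ii) the leading bilinear expression is formally $\int\phi^\perp(-\Delta-pw_0^{p-1})\phi^\|$, which vanishes by \eqref{orth1}; the residual contributions come from the $\e|X|$-sized deformation of the metric displayed in Lemma \ref{le5.1} (and Lemma \ref{metricepsilon}) and are handled by Cauchy--Schwarz, producing both the Lipschitz estimate \eqref{lips} and the quadratic bound \eqref{estMM}. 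The homogeneity $\mathcal{M}(t\,\cdot)=t^2\mathcal{M}(\cdot)$ is automatic since every surviving term is a cross product of two linear expressions in $(\phi^\perp,\delta,d,e)$.

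The principal obstacle will be the $d$-mode: recovering the Jacobi operator in $Q(d)$ requires a delicate sequence of integrations by parts mixing the tangential derivatives on $K_\e$, the antisymmetries of the curvature tensor $R_{istj}$, and the specific form $Z_j=\partial_jw_0$; moreover one must verify that the attendant error terms carry the correct power of $\e^\gamma$ (coming from the support of $\bar\chi_\e$) so that they fit into \eqref{lips}--\eqref{estMM} with the exponent $\gamma(N-3)$. A secondary difficulty is that the amplitudes $\delta,d,e$ live on $K$ while the orthogonality \eqref{orth1} is enforced fibrewise in $K_\e$, so one must check that slow-variable derivatives of $\bar\chi_\e$, $\mu_\e$ and $\Phi_\e$ do not spoil the clean decoupling; the bounds \eqref{bf2}--\eqref{bf3} and the profile estimates \eqref{hatw1} are exactly what guarantee that these spurious contributions absorb into the small perturbations $P_1,Q_1,R_1$ or into $\mathcal{M}$.
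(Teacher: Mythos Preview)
Your overall plan for the cross term $\mathcal M$ and for the $e$--mode is essentially what the paper does (indeed, for $R_\e(e)$ the paper also computes $E\bigl(\tfrac{e}{\mu_\e}\mathcal T(Z)\bar\chi_\e\bigr)$ by direct substitution). The gap is in your treatment of the $\delta$-- and $d$--modes.

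For the $\delta$--mode you assert that the mass term $\tfrac{\e}{2}\int\phi_\delta^{\,2}$ ``reduces, via the identity $\int w_0Z_0=-\int w_0^2$, to $\tfrac{\e B}{2}\int_K\delta^2$''. This is not correct: after rescaling, $\int_X\phi_\delta^{\,2}=\delta^2\int_{\R^N_+}Z_0^2=A\,\delta^2$, and $A=\int Z_0^2\neq B=\int w_0^2$ in general; the identity you quote concerns $\int w_0Z_0$, not $\int Z_0^2$. What you have omitted is that the potential is $pV_\e^{\,p-1}=p\hat w^{\,p-1}$, not $pw_0^{\,p-1}$: the discrepancy $p(\hat w^{\,p-1}-w_0^{\,p-1})\sim p(p-1)w_0^{\,p-2}w_{1,\e}$ is $O(\e)$, as are the $\Xi_{ij}$ piece and the $\sqrt{\det g^\e}$ correction, and all of these contribute at the same order as the mass term. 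Only after combining them (and using the very definition of $\mu_0$, namely relation~\equ{poto}) does the coefficient collapse to $B$. Your outline never performs this combination. The same issue arises, more severely, for the $d$--mode: the naive mass term $\tfrac{\e}{2}\int\phi_d^{\,2}$ is $O(\e)$, whereas $Q_\e(d)$ is asserted to be $O(\e^2)$; the required $O(\e)$ cancellation is again driven by the construction of $\hat w$ and is not addressed in your sketch. The Jacobi curvature term in $Q(d)$ does not come from $\Xi_{ij}$ and $\partial_{\bar a}^2$ acting on $Z_j$ alone; it emerges only once the $O(\e)$ layer has been cleared.

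The paper avoids all of this by an indirect device you did not use: since $\partial_\mu\mathcal T_{\mu,\Phi}(\hat w)=-\tfrac1\mu\mathcal T_{\mu,\Phi}(Z_0)(1+O(\e))$ and $\partial_{\Phi^j}\mathcal T_{\mu,\Phi}(\hat w)=\tfrac1\mu\mathcal T_{\mu,\Phi}(Z_j)(1+O(\e))$, one has
\[
2t\,E(\phi_\delta)=-\bigl[DF(\mathcal T_{\mu_\e+t\delta,\Phi_\e}(\hat w))-DF(\mathcal T_{\mu_\e,\Phi_\e}(\hat w))\bigr][\phi_\delta]\,(1+O(t)),
\]
and similarly with $\Phi_\e\to\Phi_\e+td$. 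The right-hand side is then computed as $\mathfrak a(t)-\mathfrak a(0)+\mathfrak b(t)+\mathfrak c(t)$; the point is that $\mathfrak a(0)$ already contains the full residual $-\Delta\hat w+\e\mu_\e^{\,2}\hat w-\hat w^{\,p}$, which is $O(\e^3)$ by the construction in Lemma~\ref{Construction}, so all the delicate $O(\e)$ corrections you would have to track by hand are absorbed automatically. The $B$--coefficient then drops out of $\partial_t[\e(\mu_\e+t\delta)^2\hat wZ_0]$, and the remaining $O(\e)$ pieces cancel precisely because $\mu_0$ was \emph{defined} by~\equ{poto}. Likewise, for the $d$--mode the curvature part of the Jacobi operator appears when one differentiates the $\e^2$--terms of $\sqrt{\det g^\e}$ and of $\Xi_{ij}$ with respect to $\Phi_\e$, and the spurious $O(\e)$ layer is absent because the leading $\e X_N H_{\alpha\alpha}$ factors do not depend on $\Phi_\e$. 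Your direct route can in principle be completed, but only after re-deriving these cancellations from the equation for $w_{1,\e}$ and the choice of $\mu_0$; as written, the identification of the main coefficients is incorrect.
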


We postpone the proof of Theorem \ref{teo4.1} to Appendix \ref{AA2}.

\setcounter{equation}{0}
\section{Solving a linear problem close to the manifold $K$}\label{eigenvalues}

In this Section we study the problem of finding $\phi \in H^1_\e$
(see \equ{defH1e}) solution to the linear problem \equ{lineare} for
a given $f \in L^2 (\Omega_{\e , \gamma } )$, (see
\equ{omegaepsilongamma}), and we establish a-priori bounds for the
solution.
This section is devoted to prove this. The result is contained in the following

\smallskip

\begin{theorem} \label{teouffa}
There exist a constant $C>0$ and a sequence $\e_l = \e \to 0$ such
that, for any $f \in L^2 (\Omega_{\e , \gamma} )$ there exists a
solution $\phi \in H^1_\e $ to Problem \equ{lineare} such that
\begin{equation}
\label{uffa1} \| \phi \|_{H^1_\e} \leq C \e^{- \max (2, k)} \| f
\|_{L^2 (\Omega_{\e , \gamma} )}.
\end{equation}

\end{theorem}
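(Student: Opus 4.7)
My plan is to use the decomposition \eqref{decomp} together with the quadratic expansion \eqref{EEE} to recast \eqref{lineare} as a Lyapunov--Schmidt system consisting of an equation on $(H^1_\varepsilon)^\bot$ and three ``ODE-like'' second-order problems on the compact submanifold $K$ for the coefficients $\delta$, $d=(d^j)$, $e$. Writing both $\phi$ and the datum $f$ in the form \eqref{decomp} and taking variations of the functional $E(\phi)-\langle f,\phi\rangle$ associated to \eqref{lineare}, Theorem~\ref{teo4.1} shows that the resulting system decouples into four pieces up to the small coupling term ${\mathcal M}$ controlled by \eqref{lips}--\eqref{estMM}. Invertibility of each piece, combined with the smallness of ${\mathcal M}$, will produce $\phi$ via a Neumann-series / contraction argument.

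The first piece to handle is $\phi^\bot$. The key point is a uniform coercivity estimate $E(\phi^\bot)\ge c\,\|\phi^\bot\|_{H^1_\varepsilon}^2$ with $c>0$ independent of $\varepsilon$. This rests on the fact that the limiting operator $-\Delta-pw_0^{p-1}$ on $\R^N_+$ (with Neumann boundary condition) is positive definite with a spectral gap on the $L^2$-orthogonal complement of $\{Z_0,Z_1,\dots,Z_{N-1},Z\}$, together with the uniform closeness of $V_\varepsilon$ to $w_0$ in scaled coordinates guaranteed by Lemma~\ref{Construction}. A Riesz representation argument on $(H^1_\varepsilon)^\bot$ then expresses $\phi^\bot$ as a Lipschitz function of $(\delta,d,e)$, the small perturbation ${\mathcal M}$ being absorbed via \eqref{lips}.

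Plugging $\phi^\bot=\phi^\bot(\delta,d,e)$ back into the reduced functional from \eqref{EEE} leaves three nearly decoupled second-order elliptic problems on $K$: a positive and coercive problem $-A\varepsilon^2\Delta_K\delta+B\varepsilon\,\delta=\tilde f_\delta$ whose inverse has norm $O(\varepsilon^{-1})$; a Jacobi-type problem $\varepsilon^2 C\,\mathfrak{J}\,d=\tilde f_d$ with inverse norm $O(\varepsilon^{-2})$, thanks to the non-degeneracy assumption on $K$ (recall \eqref{Jacobinodeg}); and an indefinite problem $-\varepsilon^2 D\,\Delta_K e-D\lambda_0\,e=\tilde f_e$, whose spectrum is $\{D(\varepsilon^2\Lambda_j-\lambda_0)\}_{j\ge 0}$ where $0=\Lambda_0\le\Lambda_1\le\cdots$ are the eigenvalues of $-\Delta_K$. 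The first two problems invert with the stated norms by classical elliptic theory; the third one is responsible for the main difficulty, since it fails to be invertible whenever $\varepsilon^2\Lambda_j=\lambda_0$ for some $j$.

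The decisive and most delicate step is therefore a gap/resonance argument to select a sequence $\varepsilon_l\to 0$ along which
$$
\inf_{j\ge 0}\,|\,\varepsilon_l^2\Lambda_j-\lambda_0\,|\ge c\,\varepsilon_l^{\,k}.
$$
By Weyl's law on the $k$-dimensional compact manifold $K$ one has $\Lambda_j\sim c_K\,j^{2/k}$, so near the resonant threshold $\Lambda_j\sim\lambda_0\,\varepsilon^{-2}$ (that is $j\sim\varepsilon^{-k}$), the consecutive spacings $\varepsilon^2(\Lambda_{j+1}-\Lambda_j)$ are of order $\varepsilon^{\,k}$. A measure/pigeonhole argument on $\varepsilon\in(0,\varepsilon_0)$ then produces a sequence $\varepsilon_l\to 0$ avoiding the forbidden neighborhoods of the resonances of width $\sim\varepsilon_l^{\,k}$. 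On this sequence the indefinite $e$-operator inverts with norm $O(\varepsilon_l^{-k})$, and reassembling $\phi$ from its four components yields $\|\phi\|_{H^1_\varepsilon}\le C\varepsilon^{-\max(2,k)}\|f\|_{L^2}$, which is precisely \eqref{uffa1}. The construction of the good sequence $\varepsilon_l$ is the main obstacle; the remaining coupling and bookkeeping are standard Lyapunov--Schmidt machinery along the lines of \cite{mmah,mm1,mm2,m}.
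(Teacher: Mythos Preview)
Your plan is correct and follows essentially the same route as the paper: decompose via \eqref{decomp}, use coercivity on $(H^1_\varepsilon)^\bot$ and on the $\delta$- and $d$-equations (the latter via the non-degeneracy of $K$), solve for $(\phi^\bot,\delta,d)$ in terms of $e$ by contraction using the smallness of $\mathcal{M}$, and then handle the indefinite $e$-equation by a Weyl/pigeonhole gap argument. The only point where the paper is more careful than your sketch is that after substituting $\phi^\bot(e),\delta(e),d(e)$ back, the reduced operator in $e$ is \emph{not} exactly $-\varepsilon^2 D\,\Delta_K-D\lambda_0$ but a perturbation of it by compact and nonlocal terms (see \eqref{defLe}--\eqref{Up0}), so instead of working with the explicit numbers $\varepsilon^2\Lambda_j-\lambda_0$ the paper runs the Courant--Fischer characterization on the full reduced form $\tilde\Upsilon_\varepsilon$, proves continuity and a monotonicity inequality \eqref{dep2} for its eigenvalues $\lambda_j(\sigma)$ in $\sigma=\varepsilon^2$, counts zero crossings by comparison with $-\Delta_K-a/\sigma$ and Weyl's law, and then pigeonholes on dyadic intervals in $\sigma$ (Lemma~\ref{gafa}); you should be aware that this monotonicity step is what makes the gap estimate robust to the perturbations coming from $\mathcal{M}$.
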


The entire section is devoted to prove Theorem \ref{teouffa}.

\medskip

Given $\phi\in H^1_\e (\Omega_{\e , \gamma })$. As in \equ{decomp},
we have the following decomposition of $\phi$
$$
\phi= \left[{\delta \over \, \mu_\e} {\mathcal T}_{\, \mu_\e ,
\, \Phi_\e} ( Z_0)
 + \sum_{j=1}^{N-1} {d^j \over \, \mu_\e} {\mathcal T}_{\, \mu_\e , \, \Phi_\e} ( Z_j)
 + {e \over \, \mu_\e}   {\mathcal T}_{\, \mu_\e , \, \Phi_\e} (Z) \right] \bar \chi_\ve   +
 \phi^\bot.
$$
We then define the energy functional associated to Problem
\equ{lineare}
$$
{\mathcal E}: (H^1_\e)^\bot \times ( {\mathcal H}^1 (K) )^{N+1} \to
\R
$$
by
\begin{equation}
\label{functional2} {\mathcal E} (\phi^\bot , \delta , d , e) =
E(\phi ) -{\mathcal L}_f (\phi )
\end{equation}
where $E$ is the functional in \equ{functional1} and ${\mathcal L}_f
(\phi )$ is the linear operator given by
$$
{\mathcal L}_f (\phi ) = \int_{\Omega_{\e , \gamma}} f \phi.
$$
Observe that
$$
{\mathcal L}_f (\phi ) = {\mathcal L}_f^1 (\phi^\bot ) + \e^{-k}
\left[ {\mathcal L}_f^2 (\delta ) + {\mathcal L}_f^3 (d ) +{\mathcal
L}_f^4 (e) \right]
$$
where ${\mathcal L}_f^1 :H^1_\e \to \R$, $ {\mathcal L}_f^2 ,
{\mathcal L}_f^4 \, : \, {\mathcal H}^1 (K) \to \R$ and  ${\mathcal
L}_f^3 \, : \, ( {\mathcal H}^1 (K) )^{N-1} \to \R$ with
$$
{\mathcal L}_f^1 (\phi^\bot ) = \int_{\Omega_{\e , \gamma}} f
\phi^\bot , \quad \e^{-k} {\mathcal L}_f^2 (\delta ) =
\int_{\Omega_{\e , \gamma}} f {\delta \over \, \mu_\e} {\mathcal
T}_{\, \mu_\e ,\, \Phi_\e } ( Z_0 ) \bar \chi_\e
$$
$$
\e^{-k} {\mathcal L}_f^3 (d ) = \sum_{j=1}^{N-1}\int_{\Omega_{\e ,
\gamma}} f {d^j \over \, \mu_\e} {\mathcal T}_{\, \mu_\e ,\bar
\Phi_\e } ( Z_j ) \bar \chi_\e \quad {\mbox {and}} \quad \e^{-k}
{\mathcal L}_f^4 (e ) = \int_{\Omega_{\e , \gamma}} f {e \over \bar
\mu_\e} {\mathcal T}_{\, \mu_\e ,\, \Phi_\e } ( Z ) \bar \chi_\e
.
$$
Finding a solution $\phi \in H^1_\e$ to Problem \equ{lineare}
reduces to finding a critical point $(\phi^\bot , \delta , d , e) $
for ${\mathcal E}$. This will be done in several steps.

\bigskip
\noindent \textrm{\textbf{Step 1}}. We claim that there exist
$\sigma >0$ and $\e_0$ such that for all $\e \in (0,\e_0)$ and all
$\phi^\bot \in (H^1_\e)^\bot$ then
\begin{equation}\label{punto1}
E(\phi^\bot ) \geq \sigma \| \phi^\bot \|^2_{L^2}.
\end{equation}

\medskip
Using the local change of variables \equ{eq:feee} and \equ{bb},
together with the result of Lemma \ref{le5.1}, we see that, for
sufficiently small $\e>0$
$$
E(\phi^\bot ) \geq {1\over 4} E_0 (\phi^\bot ), \quad {\mbox {with}}
\quad E_0 (\phi^\bot ) = \int_{K_\e \times \hat {\mathcal C}_\e}
\left[ |\nabla_X \phi^\bot |^2 - p V_\e^{p-1} \phi^\bot
\right]\sqrt{\det(g^\e)}
$$
for any $\phi^\bot = \phi^\bot (\e z , X),$ with $ z \in K_\e = {1
\over \e}K$. The set $\hat {\mathcal C}_\e$ is defined in
\equ{dddomain} and the function $V_\e$ is given by \equ{Vdef}. We
recall that $\hat {\mathcal C}_\e \to \R^N_+$ as $\e \to 0$.

We will establish \equ{punto1} showing that
\begin{equation}
\label{pas1} E_0 (\phi^\bot ) \geq \sigma \| \phi^\bot \|^2_{L^2}
\quad \foral \phi^\bot.
\end{equation}
To do so, we first observe that if we scale in the $z$-variable,
defining $\varphi^\bot (y,X) = \phi^\bot ({y\over \e }, X)$, the
relation \equ{pas1} becomes
\begin{equation}
\label{pas2} E_0 (\varphi^\bot ) \geq \sigma \| \varphi^\bot
\|^2_{L^2}.
\end{equation}
Thus we are led to show the validity of \equ{pas2}. We argue by
contradiction,  for any $n\in \N^*$, there exist
$\e_n \to 0 $ and $\varphi_n^\bot \in (H_{\e_n}^1)^\bot $ such that
\begin{equation}
\label{shen} E_0 (\varphi_n^\bot ) \leq \frac1n \| \varphi_n^\bot
\|^2_{L^2}.
\end{equation}
Without loss of generality we can assume that the sequence $(\|
\varphi_n^\bot\|)_n$ is bounded, as $n \to \infty$.
Hence, up to subsequences, we have that
$$ \varphi_n^\bot \rightharpoonup \varphi^\bot \quad \hbox{in}
\quad H^1 (K \times \R^N_+ )\qquad  \hbox{and } \quad \varphi_n^\bot
\to \varphi^\bot \quad \hbox{in} \quad L^2 (K \times \R^N_+ ).
$$
Furthermore, using the estimate in \equ{boh1} we get that
$$
\sup_{y \in K , X \in \R^N_+} \left| (1+ |X|)^{N-4}  \left[ V_\e
({y\over \e} , X ) - \mu_0^{-{N-2 \over 2}} (y) w_0 ({\bar X -
\Phi_0 (y) \over \mu_0 (y) } , {X_N \over \mu_0 (y)}) \right]
\right| \to 0,
$$
as $\e \to 0$, where $\mu_0$ and $\Phi_0 $ are the smooth explicit
function defined in \equ{boh} and \equ{mu}.

Passing to the limit as $n \to \infty$ in \equ{shen} and applying
dominated convergence Theorem, we get
\begin{equation}
\label{pas3} \int_{K \times \R^N_+} \left[ |\nabla_X \varphi^\bot
|^2 - p \left( \mu_0^{-{N-2 \over 2}} (y) w_0 ({\bar X - \Phi_0 (y)
\over \mu_0 (y) } ,
 {X_N \over \mu_0 (y)}) \right)^{p-1} (\varphi^\bot)^2 \right] dy dX \leq
 0.
\end{equation}
Furthermore, passing to the limit in the orthogonality condition we
get, for any $y \in K$
\begin{equation}
\label{shen1} \int_{\R^N_+} \varphi^\bot (y, X) Z_0 ({\bar X -
\Phi_0 (y) \over \mu_0 (y) } , {X_N \over \mu_0 (y)}) dX = 0 ,
\end{equation}
\begin{equation}
\label{shen2} \int_{\R^N_+} \varphi^\bot (y, X) Z_j ({\bar X -
\Phi_0 (y) \over \mu_0 (y) } , {X_N \over \mu_0 (y)}) dX = 0 , \quad
j=1, \ldots N-1
\end{equation}
and
\begin{equation}
\label{shen3} \int_{\R^N_+} \varphi^\bot (y, X) Z ({\bar X - \Phi_0
(y) \over \mu_0 (y) } , {X_N \over \mu_0 (y)}) dX = 0 .
\end{equation}
We thus get a contradiction with \equ{pas3}, since for any function
$\varphi^\bot$ satisfying the orthogonality conditions
\equ{shen1}--\equ{shen3} for any $y \in K$ one has
$$
\int_{K \times \R^N_+} \left[ |\nabla_X \varphi^\bot |^2 - p \left(
\mu_0^{-{N-2 \over 2}} (y) w_0 ({\bar X - \Phi_0 (y) \over \mu_0 (y)
} , {X_N \over \mu_0 (y)}) \right)^{p-1} (\varphi^\bot)^2 \right] dy
dX > 0
$$
(see for instance \cite{dmpa,rey}).


\bigskip
\noindent \textrm{\textbf{Step 2}}. For all $\e>0$ small, the
functional $P_\e (\delta )$ defined in \equ{q0e} is continuous and
differentiable in
 ${\mathcal H}^1 (K)$; furthermore, it is strictly convex and bounded from below since
\begin{equation} \label{step2}
P_\e (\delta ) \geq {1\over 4} \left[ {A \over 2} \e^2 \int_K
|\partial_a \delta |^2 + {B \over 2} \e \int_K \delta^2 \right] \geq
\sigma \e^2 \| \delta \|_{{\mathcal H}^1 (K)}^2
\end{equation}
for some small but fixed $\sigma >0$. A direct consequence of these
properties is that
$$
\delta \in {\mathcal H}^1 (K) \longmapsto P_\e (\delta ) - {\mathcal
L}^2_f (\delta )
$$
has a unique minimum $\delta$, and furthermore
$$
\e^{-{k\over 2}} \| \delta \|_{{\mathcal H}^1 (K) } \leq \mathcal{C}
\ve^{-2} \| f \|_{L^2 (\Omega_{\e , \gamma} )}
$$
for a given positive constant $\mathcal{C}$.

\bigskip
\noindent \textrm{\textbf{Step 3}}. For all $\e >0$ small, the
functional $Q_\e$ defined in \equ{qje} is a small perturbation in
$({\mathcal H}^1 (K) )^{N-1}$ of the quadratic form $\e^2 Q_0 (d)$,
defined by
$$
\e^2 Q_0 (d)=  {\e^2 \over 2} C \left[ \int_K |\partial_a d |^2 +
\int_K ((\tilde g_\e )^{ab}\,R_{mabl} - \Gamma_a^c (E_m) \Gamma_c^a (E_l) ) d^m d^l
\right] $$ with $C:= \int_{\R^N_+} Z_1^2$ and the terms $R_{maal} $
and $\Gamma_a^c (E_m)$ are smooth functions on $K$ defined
respectively in \equ{ctens} and \equ{eq:Gab}. Recall that the
non-degeneracy assumption on the minimal submanifold $K$ is
equivalent to the invertibility of the operator $Q_0 (d)$.

A consequence, for each $f \in L^2 (\Omega_{\e
, \gamma} )$,
$$
d \in ({\mathcal H}^1 (K))^{N-1} \longrightarrow \R , \quad d
\longmapsto Q_\e (d) - {\mathcal L}_f^3 (d)
$$
has a unique critical point $d$, which satisfies
$$
\ve^{-{k\over 2}} \| d \|_{({\mathcal H}^1 (K))^{N-1}} \leq
\widetilde{\sigma} \ve^{-2} \| f \|_{L^2 (\Omega_{\e , \gamma} )}
$$
for some proper $\widetilde{\sigma} >0$.

\bigskip
\noindent \textrm{\textbf{Step 4}}. Let $f \in L^2 (\Omega_{\e ,
\gamma})$ and assume that $e$ is a given (fixed) function in
${\mathcal H}^1 (K)$. We claim that for all $\e >0$ small enough,
the functional ${\mathcal G} : (H_\e^1)^\bot \times ({\mathcal H}^1
(K) )^N \to \R$
$$
(\phi^\bot , \delta , d) \to {\mathcal E} (\phi^\bot , \delta , d ,
e)
$$
has a critical point $(\phi^\bot , \delta , d) $. Furthermore there
exists a positive constant $C$, independent of $\e$, such that
\begin{equation} \label{punto4}
\| \phi^\bot \| + \e^{-{k\over 2}} \bigg[ \| \delta \|_{{\mathcal
H}^1 (K)} + \| d \|_{({\mathcal H}^1 (K))^{N-1}} \bigg] \leq C
\e^{-2} \bigg[ \|f \|_{L^2 (\Omega_{\e , \gamma} )} + \e^{-{k\over
2}} \ve^{2} \| e \|_{{\mathcal H^1 (K) } }\bigg].
\end{equation}

To prove the above assertion, we first consider the functional
$${\mathcal G}_0 (\phi^\bot , \delta , d) = {\mathcal G} (\phi^\bot , \delta , d , e) - {\mathcal M} (\phi^\bot , \delta , d , e)
$$
where ${\mathcal M} $ is the functional that recollects all mixed
terms, as defined in \equ{EEE}. A direct consequence of Step 1, Step
2 and Step 3 is that ${\mathcal G}_0$ has a critical point
$(\phi^\bot = \phi^\bot (f) , \delta = \delta (f) , d = d(f))$,
namely the system
$$
D_{\phi^\bot } E (\phi^\bot ) = D_{\phi^\bot} {\mathcal L}_f^1
(\phi^\bot ), \quad \e^{-{k\over 2}} D_\delta P_\e (\delta ) =
D_\delta {\mathcal L}_f^2 (\delta ), \quad \e^{-{k\over 2}} D_d Q_\e
(d) = D_d {\mathcal L}_f^3 (d)
$$
is uniquely solvable in $(H_\e^1 )^\bot \times ({\mathcal H}^1 (K)
)^N $ and furthermore
$$
\| \phi^\bot \|_{H^1_\e} +\e^{-{k\over 2}} \| \delta \|_{{\mathcal
H}^1 (K)} + \e^{-{k\over 2}} \| d \|_{({\mathcal H}^1 (K) )^{N-1}}
\leq C \e^{-2} \| f \|_{L^2 (\Omega_{\e , \gamma })}
$$
for some constant $C>0$, independent of $\e$.

If we now consider the complete functional ${\mathcal G}$, a
critical point of ${\mathcal G}$ shall satisfy the system
\begin{equation}\label{syst2}
  \begin{cases}
  D_{\phi^\bot } E(\phi^\bot ) = D_{\phi^\bot} {\mathcal L}_f^1 (\phi^\bot ) + D_{\phi^\bot } {\mathcal M} (\phi^\bot , \delta , d , e) \\
 D_\delta P_\e (\delta ) = D_\delta {\mathcal L}_f^2 (\delta ) + D_{\delta } {\mathcal M} (\phi^\bot , \delta , d , e)
    \\   D_d Q_\e (d ) = D_d {\mathcal L}_f^3 (d ) + D_{d } {\mathcal M} (\phi^\bot , \delta , d ,
    e).
  \end{cases}
\end{equation}
On the other hand, as we have already observed in Theorem
\ref{teo4.1}, we have
$$
\| D_{(\phi^\bot , \delta , d)} {\mathcal M}(\phi_1^\bot , \delta_1
, d_1 , e_1) - D_{(\phi^\bot , \delta , d)} {\mathcal M}(\phi_2^\bot ,
\delta_2 , d_2 , e_2) \| \leq C \e^{2} \times
$$
$$
\left[ \| \phi_1^\bot - \phi_2^\bot \| + \e^{-{k\over 2}} \|
\delta_1 - \delta_2 \|_{{\mathcal H}^1 (K)} + \e^{-{k\over 2}} \|
d_1 - d_2 \|_{({\mathcal H}^1 (K))^{N-1}} + \e^{-{k\over 2}} \| e_1
- e_2 \|_{{\mathcal H}^1 (K)}\right].
$$
Thus the contraction mapping Theorem guarantees the existence of a
unique solution $(\bar \phi^\bot , \bar \delta , \bar d)$ to
\equ{syst2} in the set
$$
\| \phi^\bot \|_{H_\e^1} + \e^{-{k\over 2}} \| \delta \|_{{\mathcal
H}^1 (K)} + \e^{-{k\over 2}} \| d \|_{({\mathcal H}^1 (K))^{N-1}}
\leq C \left[ \e^{-2} \| f \|_{L^2 (\Omega_{\e , \gamma} )} + \e^{2}
\e^{-{k\over 2}} \| e \|_{{\mathcal H}^1 (K)} \right].
$$
Furthermore, the solution $\bar \phi^\bot = \bar \phi^\bot (f,e) $,
$\bar \delta =\bar  \delta (f,e )$ and $\bar d= \bar d(f,e)$ depend
on $e$ in a smooth and non-local way.

\bigskip
\noindent {\rm \textbf{Step 5}}. Given $f\in L^2 (\Omega_{\e ,
\gamma} )$, we replace the critical point $(\bar \phi^\bot = \bar
\phi^\bot (f,e), \bar \delta =\bar  \delta (f,e ), \bar d= \bar
d(f,e) )$ of ${\mathcal G}$ obtained in the previous step into the
functional ${\mathcal E}(\phi^\bot , \delta , d , e)$ thus getting a
new functional depending only on $e \in {\mathcal H}^1 (K)$, that we
denote by ${\mathcal F}_\e(e)$, given by
\begin{eqnarray*}
{\mathcal F}_\e(e) &=& \e^{-k} [ R_\e (e) - {\mathcal L}_f^4 (e) ] +
E(\bar \phi^\bot (e) ) -\ve^{-k} {\mathcal L}_f^1 (\bar \phi^\bot
(e)) +\e^{-k} [ P_\e (\bar \delta (e)) - {\mathcal L}_f^2 (\bar
\delta (e) )]
\\
&+& \e^{-k} [ Q_\e (\bar d (e)) - {\mathcal L}_f^3 (\bar d (e)) ] +
{\mathcal M} (\bar \phi^\bot (e) , \bar \delta (e) , \bar d (e) ,
e).
\end{eqnarray*}
The rest of the proof is devoted to show that there exists a
sequence $\e = \e_l \to 0$ such that
\begin{equation}
\label{effe1} D_e {\mathcal F}_\e(e) = 0
\end{equation}
is solvable. Using the fact that $(\bar \phi^\bot , \bar \delta ,
\bar d )$ is a critical point for ${\mathcal G}$ (see Step 4 for the
definition), we have that
\begin{equation}
\label{effe11} D_e {\mathcal F}_\e (e) = \e^{-k} D_e [ R_\e (e) -
{\mathcal L}_f^4 (e) ]  + D_e {\mathcal M} (\bar \phi^\bot (e) ,
\bar \delta (e) , \bar d (e) , e).
\end{equation}
Define
\begin{equation}
\label{defLe} {\mathcal L}_\e := \e^{-k} D_e  R_\e (e)  + D_e
{\mathcal M} (\bar \phi^\bot (e) , \bar \delta (e) , \bar d (e) ,
e),
\end{equation}
regarded as self adjoint in ${\mathcal L}^2 (K)$. The work to solve
the equation $D_e {\mathcal F}_\e (e)=0$ consists in showing the
existence of a sequence $\e_l \to 0$ such that $0$ lies suitably far
away from the spectrum of ${\mathcal L}_{\e_l}$.

We recall now that the map
$$
(\phi^\bot , \delta , d , e ) \to D_e {\mathcal M} (\phi^\bot ,
\delta , d , e)
$$ is a linear operator in the variables $\phi^\bot , \delta , d$, while it is constant in $e$. This is contained in the result of Theorem \ref{teo4.1}. If we
furthermore take into account that the terms $\bar \phi^\bot $, $\bar \delta $ and $\bar d $ depend smoothly and in a non-local way through $e$, we conclude that, for
any $e \in {\mathcal H}^1 (K)$,
\begin{equation}
\label{effe2} D_e {\mathcal M} (\bar \phi^\bot (e) , \bar \delta (e)
, \bar d (e) , e )[e] = \e^{\gamma (N-3)} \e^{-k} \int_K \left( \e
\eta_1 (e) \partial_a e + \eta_2 (e) e \right)^2
\end{equation}
where $\eta_1$ and $\eta_2$ are  non local operators in $e$, that
are bounded, as $\e \to 0$, on bounded sets of ${\mathcal L}^2 (K)$.
Thanks to the result contained in Theorem \ref{teo4.1} and the above
observation, we conclude that the quadratic from
$$
\Upsilon_\e (e) := \e^{-k}  D_e R_\e (e) [e] + D_e {\mathcal M}
(\bar \phi^\bot (e) , \bar \delta (e) , \bar d (e) , e) [e]
$$
can be described as follows
\begin{equation}
\label{Uptilde} \tilde \Upsilon_\e (e) = \e^k \Upsilon_\e (e ) =
\Upsilon^0_\e (e)
 - {\bar \la_0 } \int_K e^2 + \e \Upsilon_\e^1 (e)
\end{equation}
where
\begin{equation}
\label{Up0} \Upsilon_\e^0 (e) = \e^2 \int_K  (1+ \e^{\gamma (N-3)}
\eta_1 (e) ) \left| \partial_a \left( e (1+ e^{-\e^{-\lambda'}}
\beta_3^\e (y)  ) \right) \right|^2.
\end{equation}
In the above expression $\bar \la_0 $ is the positive number defined
by
$$\bar \la_0 = (\int_{\R^N_+} Z_1^2 )\, \la_0,$$
$\Upsilon_e^1 (e)$ is a compact quadratic form in ${\mathcal H}^1
(K)$, $\beta_3^\e$ is a smooth and bounded (as $\e \to 0$) function
on $K$, given by \equ{Q}. Finally, $\eta_1 $ is a non local operator
in $e$, which is uniformly bounded, as $\e \to 0$
 on bounded sets of ${\mathcal L}^2 (K)$.

Thus, for any $\e >0$, the eigenvalues of
$$
{\mathcal L}_\e e = \la e , \quad e \in {\mathcal H}^1 (K)
$$
are given by a sequence $\la_j (\e)$, characterized by the
Courant-Fisher formulas
\begin{equation}\label{courantFisher}
\la_j (\e )= \sup_{dim (M)= j-1} \inf_{e \in M^\bot \setminus \{0
\}} {\tilde \Upsilon_\e (e ) \over \int_K e^2 } = \inf_{dim (M)= j }
\sup_{e \in M\setminus \{0 \}} {\tilde \Upsilon_\e (e) \over \int_K
e^2 }.
\end{equation}
The proof of Theorem \ref{teouffa} and of the inequality \equ{uffa1}
will follow then from Step 4 and formula \equ{punto4}, together with
the validity of the following

\bigskip
\begin{lemma} \label{gafa}
There exist a sequence $\e_l \to 0 $ and a constant $c>0$ such that,
for all $j$, we have
\begin{equation}
\label{autovalore} |\la_j (\e_l  ) | \geq c \e_l^k.
\end{equation}
\end{lemma}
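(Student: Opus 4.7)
My plan is to exploit the decomposition \eqref{Uptilde}--\eqref{Up0}, which presents $\mathcal{L}_\e$ as a small relatively compact perturbation of the model operator $-\e^2 \Delta_K - \bar\la_0$ acting on $\mathcal{H}^1(K)$. Since $K$ is a closed $k$-dimensional manifold, the eigenvalues $0 \le \mu_1 \le \mu_2 \le \cdots$ of $-\Delta_K$ obey Weyl's law,
\[
N(\Lambda) := \#\{j : \mu_j \le \Lambda\} \;=\; c_K\,\Lambda^{k/2} + O\bigl(\Lambda^{(k-1)/2}\bigr).
\]
The target estimate \eqref{autovalore} will then follow from a classical gap-lemma/measure argument: on every dyadic interval $[\e, 2\e]$ the Lebesgue measure of ``bad'' parameters is strictly smaller than the interval itself, so a good $\e_l$ can be selected in each one.

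First I would combine the Courant--Fischer formula \eqref{courantFisher} with the explicit expressions for $P_1, Q_1, R_1$ in Theorem~\ref{teo4.1} together with the bounds \eqref{estMM}--\eqref{lips} to establish the uniform spectral comparison
\[
\la_j(\e) = \e^2 \mu_j - \bar\la_0 + r_j(\e), \qquad |r_j(\e)| \le C\,\e^{1+\tau},
\]
for some fixed $\tau>0$ and all $j$. Indices with $\e^2 \mu_j \notin [\bar\la_0/2, 2\bar\la_0]$ then automatically satisfy $|\la_j(\e)| \ge \bar\la_0/4$, so attention reduces to the resonant set $J_\e := \{j : \e^2 \mu_j \in [\bar\la_0/2, 2\bar\la_0]\}$, for which Weyl's law gives $\#J_\e \le C_K\,\e^{-k}$. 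On $[\e, 2\e]$ each resonant $j$ has derivative $\partial_{\tilde\e}(\tilde\e^2 \mu_j - \bar\la_0) = 2\tilde\e \mu_j \sim \e^{-1}$, so the bad set $\{\tilde\e : |\la_j(\tilde\e)| < c\,\tilde\e^k\}$ has length at most $C(c + \e^\tau)\,\e^{k+1}$. Summing over $J_\e$ yields total bad measure at most $C'(c + \e^\tau)\,\e$, strictly less than $\e$ once $c$ is fixed small and $\e$ is small enough. Selecting $\e_l$ inside each interval $[2^{-l}, 2^{-l+1}]$ outside this bad set produces the required sequence.

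The main obstacle is promoting the naive $O(\e)$ estimate on the compact perturbation $\e\,\Upsilon_\e^1$ to the uniform bound $|r_j(\e)| \le C \e^{1+\tau}$ needed to absorb the $\e^{-k}$ factor coming from $\#J_\e$. This is achieved via two observations: (i) the multiplicative factor $(1 + \e^{\gamma(N-3)} \eta_1)$ in \eqref{Up0} is polynomially small of order $\gamma(N-3) \gg k$, since the codimension assumption $N \ge k+7$ gives $N-3 \ge k+4$ while $\gamma$ may be taken arbitrarily close to $1$; and (ii) the compact perturbation $\Upsilon_\e^1$ is generated by the non-local smoothing dependence of $\bar\phi^\bot(e), \bar\delta(e), \bar d(e)$ on $e$ recorded in Theorem~\ref{teo4.1} and \eqref{lips}, so its action on high-frequency modes with $\mu_j \sim \e^{-2}$ gains extra powers of $\e$ through a commutator/integration-by-parts argument. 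These two refinements are precisely what makes the gap-lemma argument work uniformly for all $\sim \e^{-k}$ resonant frequencies simultaneously.
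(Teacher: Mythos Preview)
Your overall architecture (Weyl count of resonant modes, then a gap--measure selection on dyadic intervals) is in the same spirit as the paper's, but the two crucial ``refinements'' you invoke to close the argument both fail, and the paper deliberately avoids needing them.

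First, the dimensional claim in (i) is a misreading of the hypotheses: the paper assumes $N=n-k\ge 7$, not $N\ge k+7$. So $\gamma(N-3)$ is only guaranteed to exceed $2$, not to dominate $k$; you cannot absorb a factor $\e^{-k}$ into the multiplicative perturbation of $\Upsilon_\e^0$. Second, the assertion in (ii) that the compact term $\e\,\Upsilon_\e^1(e)$ gains extra powers of $\e$ on modes with $\mu_j\sim\e^{-2}$ is not proved anywhere and is not a consequence of the information recorded in Theorem~\ref{teo4.1} or \eqref{lips}: ``compact in $\mathcal H^1(K)$'' gives no quantitative decay of $\Upsilon_\e^1(e_j)/\int_K e_j^2$ in $j$, and the nonlocal dependence of $\bar\phi^\bot(e),\bar\delta(e),\bar d(e)$ on $e$ is only controlled at the level of \eqref{estMM}. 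Without a bound $|r_j(\e)|\le C\e^{1+\tau}$ uniform over the $\sim\e^{-k}$ resonant indices, your bad-set measure is $C'(c+\e^0)\,\e$, which is not smaller than the interval.

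The paper's proof sidesteps this entirely. Setting $\sigma=\e^2$, it does \emph{not} compare $\lambda_j(\sigma)$ to $\sigma\mu_j-\bar\lambda_0$ pointwise; instead it shows a two-sided variation estimate
\[
(\sigma_2-\sigma_1)\,\frac{\gamma_-}{2\sigma_2^{2}}\ \le\ \sigma_2^{-1}\lambda_j(\sigma_2)-\sigma_1^{-1}\lambda_j(\sigma_1)\ \le\ (\sigma_2-\sigma_1)\,\frac{2\gamma_+}{\sigma_1^{2}},
\]
valid uniformly in $j$ and requiring only the crude limits \eqref{lim1}--\eqref{lim2}, which need no improvement of the $O(\sqrt\sigma)$ remainder. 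This monotonicity of $\sigma\mapsto\sigma^{-1}\lambda_j(\sigma)$ forces each $\lambda_j$ to cross zero at most once on a dyadic $\sigma$-interval, so the number of crossings is bounded by the number of negative eigenvalues, which Weyl's law controls by $C\sigma^{-k/2}$. Pigeonhole then produces a crossing-free subinterval of length $\gtrsim 2^{-l(1+k/2)}$, and the same variation inequality shows that at its midpoint $|\lambda_j|\ge c\,\sigma^{k/2}=c\,\e^{k}$. The point is that the ``Kato-type'' variation argument replaces your unproven sharp remainder bound by a transversality statement that follows directly from the leading $-\bar\lambda_0/\sigma$ behaviour of $\sigma^{-1}\Sigma_\sigma$.
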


\bigskip
The proof of this Lemma follows  closely the proof of a related result established in \cite{dkwy}, but we
reproduce it for completeness. We shall thus devote the rest of this
section to prove Lemma \ref{gafa}.

We call $\Sigma_\e (e) = {\tilde \Upsilon_\e (e) \over \int_K e^2
}$.

For notational convenience, we let $\sigma = \e^2$. We are thus
interested in studying the eigenvalue problem
\begin{equation}
\label{dep5} {\mathcal L}_\sigma \eta = \la \eta, \quad \eta \in
{\mathcal H}^1 (K).
\end{equation}
With this notation and using \eqref{Uptilde} and \eqref{Up0}
together with the fact that $\g (N-3)>2$ we have that
$$
\Sigma_\sigma (e) = {\sigma \int_K  (1+ o(\sigma ) \eta_1 (e) )
\left| \partial_a \left( e (1+ o(\sigma ) \beta_3^\sigma (y)  )
\right) \right|^2 \over \int_K e^2 } -\bar \la_0 + \sqrt{\sigma}
{\Upsilon_\sigma^1 (e) \over \int_K e^2 }
$$
where $o(\sigma ) \to 0 $ as $\sigma \to 0$.

We claim that there exists a number $\delta >0$ such that for any
$\sigma_2 >0$ and for any $j\geq 1$ such that
$$
\sigma_2 + |\lambda_j (\sigma_2 )| < \delta
$$
and any $\sigma_1 $ with ${\sigma_2 \over 2} < \sigma_1 < \sigma_2
$, we have that
\begin{equation}
\label{dep3} \lambda_j (\sigma_1 ) < \lambda_j (\sigma_2).
\end{equation}

To prove the above assertion, we start observing that, since
$\beta_3^\sigma$ is an explicit, smooth and bounded (as $\sigma \to
0$) function on $K$, as given by \equ{Q} and since $\eta_1 $ is a
non local operator in $e$, which is uniformly bounded, as $\sigma
\to 0$ on bounded sets of ${\mathcal L}^2 (K)$, we have that
\begin{equation}
\label{lim1} \lim_{\sigma \to 0} { \int_K  (1+ o(\sigma ) \eta_1 (e)
) \left| \partial_a \left( e (1+ o(\sigma ) \beta_3^\sigma (y)  )
\right) \right|^2 \over \int_K e^2 } = \frac{\int_K |\partial_a
e|^2}{\int_K e^2 }
\end{equation}
and
\begin{equation}
\label{lim2} \lim_{\sigma \to 0} \sqrt{\sigma} {\Upsilon_\sigma^1
(e) \over \int_K e^2 } = 0
\end{equation}
uniformly for any $e $.

Consider now two numbers $0<\sigma_1 < \sigma_2 $. Then for any $e $
with $\int_K e^2 =1$, we have that
\begin{eqnarray*}
\sigma_1^{-1} \Sigma_{\sigma_1} (e) - \sigma_2^{-1}
\Sigma_{\sigma_2} (e) &=& -\bar \la_0 {\sigma_2 - \sigma_1 \over
\sigma_1 \sigma_2} + \sigma_1^{-1} \Upsilon_{\sigma_1}^0 (e)
-\sigma_2^{-1} \Upsilon_{\sigma_2}^0 (e)
\\
&+& \sigma_1^{-\frac12}\, \Upsilon_{\sigma_1}^1 (e)
-\sigma_1^{-\frac12}\, \Upsilon_{\sigma_1}^1 (e).
\end{eqnarray*}
A consequence of \equ{lim1} and \equ{lim2} is that there exists a
real number $\sigma^*>0$ such that, for all $\sigma_1 <\sigma_2
<\sigma^*$
$$
\left| \sigma_1^{-1} \Upsilon_{\sigma_1}^0 (e) -\sigma_1^{-1}
\Upsilon_{\sigma_1}^0 (e) \right| \leq c (\sigma_2 - \sigma_1 )
$$
and
$$
\left| \sigma_1^{-\frac12} \,\Upsilon_{\sigma_1}^1 (e)
-\sigma_1^{-\frac12} \,\Upsilon_{\sigma_1}^1 (e)  \right| \leq c
{\sigma_2 - \sigma_1 \over \sqrt{\sigma_1 \sigma_2} (\sigma_1 +
\sigma_2 )}
$$
for some constant $c$, and uniformly for any $e$ with $\int_K e^2
=1$. Thus we have that, for some $\gamma_-$, $\gamma_+ >0$
$$
\sigma_1^{-1} \Sigma_{\sigma_1} (e) + (\sigma_2 - \sigma_1)
{\gamma_- \over 2\sigma_2^2} \leq \sigma_2^{-1} \Sigma_{\sigma_2}
(e) \leq \sigma_1^{-1} \Sigma_{\sigma_1} (e) + (\sigma_2 - \sigma_1)
{2\gamma_+ \over \sigma_1^2}
$$
for any $\sigma_1 < \sigma_2 < \sigma^*$ and any $e$ with $\int_K
e^2 =1$. Thus in particular we get that, there exists $\sigma^*$
such that for all $0<\sigma_1 <\sigma_2 <\sigma^*$ and for all
$j\geq 1$
\begin{equation}
\label{dep2} (\sigma_2 - \sigma_1 ) {\gamma_- \over 2 \sigma_2^2 }
\leq \sigma_2^{-1} \la_j (\sigma_2 ) - \sigma_1^{-1} \la_j (\sigma_1
) \leq 2 (\sigma_2 - \sigma_1 ) {\gamma_+ \over \sigma_1^2}.
\end{equation}
From \equ{dep2} it follows directly that, for all $j\geq 1$, the
function $\sigma \in (0,\sigma^*) \to \lambda_j (\sigma )$
 is continuous. If we now assume that $\sigma_1 \geq {\sigma_2 \over 2}$, formula \equ{dep2} gives
\begin{equation}
\label{dep0} \lambda_j (\sigma_1 ) \leq \lambda_j (\sigma_2 ) +
{\sigma_1 - \sigma_2  \over \sigma_2 } \left[ \lambda_j (\sigma_2 )
+ \gamma {\sigma_1 \over \sigma_2} \right]
\end{equation}
for some $\gamma>0$. This gives the proof of \equ{dep3}.

\bigskip
We will find a sequence $\sigma_l \in (2^{-(l+1)} , 2^{-l} )$ for $l
$ large as in the statement of the Lemma.

Define
$$
{\L} = \{ \sigma \in (2^{-(l+1)} , 2^{-l} ) \, : \, ker {\mathcal
L}_\sigma \not= \{ 0 \} \}.
$$
If $\sigma \in {\L}$ then for some $j$,  $\lambda_j (\sigma ) =
0$. Choosing $l$ sufficiently large, the continuity of the function
$\sigma \to \lambda_j (\sigma)$ together with \equ{dep3} imply that
$\lambda_j (2^{-(l+1)} )<0$. In other words, for all $l$
sufficienlty large
\begin{equation}
\label{dep4} {\mbox {card}} ({\L} ) \leq N (2^{-(l+1)})
\end{equation}
where $N(\sigma)$ denotes the number of negative eigenvalues of
problem \equ{dep5}. We next estimate $N(\sigma)$, for small values
of $\sigma$. To do so, let $a>0$ be a positive constant such that
$a>\bar \la_0$ and consider the operator
\begin{equation}
\label{dep6} {\mathcal L}_\sigma^+ = -\Delta_K - {a\over \sigma}.
\end{equation}
We call $\lambda_j^+ (\sigma)$ its eigenvalues. Courant-Fisher
characterization of eigenvalues gives that $\lambda_j (\sigma ) \leq
\lambda_j^+ (\sigma)$ for all $j$ and for all $\sigma $ small. Thus
$N(\sigma ) \leq N_+ (\sigma )$, where $N_+ (\sigma)$ is the number
of negative eigenvalues of \equ{dep6}.

Denote now by $\mu_j$ the eigenvalues of $-\Delta_K$ (ordered to be
non-decreasing in $j$ and counted with their multiplicity). Weyl's
asymptotic formula (see for instance \cite{chavel}) states that
$$
\mu_j = C_K j^{2\over k} + o(j^{2\over k} ) \quad {\mbox {as}} \quad
j \to \infty
$$
for some positive constant $C_K$ depending only on the dimension $k$
of $K$. Since $\lambda_j^+ = \mu_j - {a\over \sigma}$, we get
$$
N_+ (\sigma ) = C \sigma^{-{k \over 2}} + o(\sigma^{-{k\over 2}} )
\quad {\mbox {as}} \quad \sigma \to 0.
$$
This fact, together with \equ{dep4}, give ${\mbox {card}} ({\L} )
\leq C 2^{l {k\over 2}}$. Hence there exist an interval $(a_l , b_l
) \subset (2^{-(l+1)} , 2^{-l} )$ such that $a_l $, $b_l \in
{\L}_l$, and all $\sigma \in (a_l , b_l )$ with Ker ${\mathcal
L}_\sigma \not= \{ 0 \}$ so that
\begin{equation}
\label{dep7} b_l - a_l \geq {2^{-l} - 2^{-(l+1)} \over {\mbox
{card}} ({\L}_l )} \geq C 2^{-l (1+ {k\over 2})}.
\end{equation}
Let $\sigma_l = {a_l + b_l \over 2}$. We will show that this is a
sequence that verifies the statement of Lemma \ref{gafa} and the
corresponding estimate \equ{autovalore}. By contradiction, assume
that for some $j$ we have
\begin{equation}
\label{dep8} |\lambda_j (\sigma_l ) |\leq \delta \sigma_l^{k\over 2}
\end{equation}
for some arbitrary $\delta >0$ small. Assume first that $0<\la_j
(\sigma_l ) <\delta \sigma_l^{k\over 2}$. Then from \equ{dep0} we
get
$$
\la_j (a_l ) \leq \la_j (\sigma_l ) - {\sigma_l - a_l \over \sigma_l
} \left[ \la_j (\sigma_l ) + \gamma {a_l \over 2\sigma_l } \right]
$$
and using \equ{dep7}-\equ{dep8} we get to
$$
\la_j (a_l ) \leq \delta \sigma_l^{k\over 2} - C{ 2^{-l (1+{k\over
2})} \over 2\sigma_l } \left[ \la_j (\sigma_l ) +{\gamma a_l \over
2\sigma_l } \right] <0,
$$
having chosen $\delta$ small. From this is follows that $\la_j
(\sigma)$ must vanish at some $\sigma \in (a_l , b_l )$, but this is
in contradiction with the choice of the interval $(a_l , b_l )$.

The case $-\delta \sigma_l^{k\over 2} <\la_j (\sigma_l ) <0$ can be
treated in a very similar way.

This concludes the proof of Lemma \ref{gafa}.

\setcounter{equation}{0}
\section{Proof of the Result}\label{proof}

In this section, we show the existence of a solution to Problem
\equ{eq:pe} of the form
$$
v_\e = V_\e +\phi
$$
where $V_\e$ is defined in \equ{Vdef}. As already observed at the
end of Section \ref{s:aprsol}, this reduces to find a solution
$\phi$ to
\begin{equation}\label{nonno}
  \begin{cases}
    -\Delta \phi + \ve \phi- p V_\e^{p-1} \phi =S_\e (V_\e) + N_\e (\phi) & \text{ in } \O_\ve, \\
    \frac{\del \phi}{\pa \nu} = 0 & \text{ on } \partial \O_\ve,
  \end{cases}
\end{equation}
where $S_\e (V_\e )$ is defined in \equ{eomegaeps}, and
$N_\e (\phi)$ in \equ{Nomegaeps}.

Given the result  of Lemma \ref{Construction}, a first fact is that
\begin{equation}
\label{nonna} \| S_\e (V_\e )\|_{L^2 (\Omega_\e ) } \leq
C \e^{1+{I+1 \over 2}}
\end{equation}
as a direct consequence of estimate \equ{bf4}.

Define $L_\e \phi := -\Delta \phi +\e \phi - p V_\e^{p-1} \phi$. We
claim that there exist a sequence $\e_l \to 0$ and a positive
constant $C >0$, such that, for any $f \in L^2 (\Omega_{\e_l} )$,
there exists a solution $\phi \in H^1 (\Omega_{\e_l} )$ to the
equation
$$L_{\e_l }\phi = f \quad \hbox{in } \Omega_{\e_l }, \qquad {\partial
\phi \over
\partial \nu } = 0 \quad \hbox{on }\quad  \partial \Omega_{\e_l}.
$$
Furthermore,
\begin{equation}
\label{gigio} \| \phi \|_{H^1 (\Omega_{\e_l} )} \leq C\, \e_l^{-\max \{
2, k \}} \| f \|_{L^2 (\Omega_{\e_l})}.
\end{equation}

We postpone for the moment the proof of this fact and we assume its
validity. For simplicity of notations we omit the dependance of $\e$
on $l$ setting $\e_l = \e$. Thus $\phi \in H^1 (\Omega_\e )$ is a
solution to \equ{nonno} if and only if
$$
\phi = L_\e^{-1} \left( S_\e (V_\e ) + N_\e (\phi )
\right).
$$
Notice that
\begin{equation}
\label{nonno1} \| N_\e (\phi ) \|_{L^2 (\Omega_\e )} \leq C
\begin{cases}
    \| \phi \|_{H^1 (\Omega_\e )}^p & \text{ for } p\leq 2, \\
    \| \phi \|_{H^1 (\Omega_\e )}^2 & \text{ for } p>2
  \end{cases} \quad\qquad  \| \phi \|_{H^1 (\Omega_\e )} \leq 1
\end{equation}
and
\begin{eqnarray}
\label{nonno2} &&\| N_\e (\phi_1 ) - N_\e (\phi_2 ) \|_{L^2
(\Omega_\e
)}  \nonumber\\
&&\leq C \,\begin{cases}
   \left(  \| \phi_1  \|_{H^1 (\Omega_\e )}^{p-1} + \| \phi_2  \|_{H^1 (\Omega_\e )}^{p-1} \right) \| \phi_1 -\phi_2 \|_{H^1 (\Omega_\e )} & \text{ for } p\leq 2, \\
    \left(  \| \phi_1  \|_{H^1 (\Omega_\e )} + \| \phi_2  \|_{H^1 (\Omega_\e )} \right) \| \phi_1 -\phi_2 \|_{H^1 (\Omega_\e )}  & \text{ for } p>2
  \end{cases},
\end{eqnarray}
for any $\phi_1$, $\phi_2$ in $H^1 (\Omega_\e )$ with $\| \phi_1
\|_{H^1 (\Omega_\e )}$, $ \| \phi_2 \|_{H^1 (\Omega_\e )} \leq 1.$

Defining $T_\e :H^1 (\Omega_\e ) \to H^1 (\Omega_\e )$ as
$$
T_\e (\phi ) = L_\e^{-1} \left( S_\e (V_\e ) + N_\e (\phi
) \right)
$$
we will show that $T_\e$ is a contraction in some small ball in $H^1
(\Omega_\e )$. A direct consequence of \equ{nonna}, \equ{nonno1},
\equ{nonno2} and \equ{gigio}, is that
$$
\| T_\e (\phi ) \|_{H^1 (\Omega_\e )} \leq C \e^{- \max \{ 2 , k \}}
\begin{cases}
   \left( \e^{1+{I+1 \over 2}} +  \| \phi  \|_{H^1 (\Omega_\e )}^p \right) & \text{ for } p\leq 2, \\
    \left( \e^{1+ {I+1 \over 2}} + \| \phi  \|_{H^1 (\Omega_\e )}^2 \right)   & \text{ for } p>2.
  \end{cases}
$$
Now we choose integers $d$ and $I$ so that
$$
d> \begin{cases}
   {\max \{ 2 , k \} \over p-1}& \text{ for } p\leq 2, \\
    \max \{ 2 , k \}  & \text{ for } p>2
  \end{cases} \quad I > d-1 + \max \{ 2 , k \}.
  $$
  Thus one easily gets that $T_\e$ has a unique fixed point in set
  $${\mathcal B} = \{ \phi \in H^1 (\Omega_\e ) \, : \, \| \phi \|_{H^1 (\Omega_\e )} \leq \e^d \},
  $$
  as a direct application of the contraction mapping Theorem. This concludes the proof of Theorem \ref{teo1}.

 \bigskip

  We next prove the assertion previously made: we recall it. We claim that there exist a sequence $\e_l \to 0$ and a positive constant $\sigma >0$, such that, for any $f
  \in L^2 (\Omega_{\e_l} )$, there exists a solution $\phi \in H^1 (\Omega_{\e_l} )$ to
$L_{\e_l }\phi = f$ in $\Omega_{\e_l }$, with ${\partial \phi \over
\partial \nu } = 0$ on $\partial \Omega_{\e_l}$, furthermore
estimate \equ{gigio} holds true.

\bigskip
By contradiction, assume that for all $\e \to 0$ there exists a
solution $(\phi_\e , \la_\e )$, $\phi_\e \not= 0$, to
\begin{equation}
\label{uno} L_\e (\phi_\e ):= \Delta \phi_\e -\e \phi_\e +
pV_\e^{p-1} \phi_\e = \la_\e \phi_\e \quad {\mbox {in}} \quad
\Omega_\e , \quad {\partial \phi_\e \over \partial \nu} = 0 \quad
{\mbox {on}} \quad \partial \Omega_\e \end{equation} with
\begin{equation}
\label{due} |\la_\e | \e^{-\max \{ 2, k\} } \to 0 , \quad {\mbox
{as}} \quad \e \to 0.
\end{equation}
Let $\eta_\e $ be a smooth cut off function (like the one defined in
\equ{magaly}) so that $\eta_\e = 1 $ if dist $(y, K_\e )
<{\e^{-\gamma} \over 2}$ and $\eta_\e =0$ if dist $(y, K_\e ) >
\e^{-\gamma} $. In particular one has that $|\nabla \eta_\e| \leq c
\e^{\gamma} $ and $|\Delta \eta_\e |\leq c \e^{2\gamma}$, in the
whole domain.

Define $\tilde \phi_\e = \phi_\e \eta_\e$. Then $\tilde \phi_\e$
solves
\begin{equation}\label{tre}
  \begin{cases}
   L_\e (\tilde \phi_\e ) = \la_\e \tilde \phi_\e -\nabla \eta_\e \nabla \phi_\e - \Delta \eta_\e \phi_\e & \text{ in } \O_{\ve,\gamma} \\
    \frac{\del \phi_\e}{\pa \nu} = 0 & \text{ on } \partial \Omega_\e \setminus {\overline {\Omega}}_{\e , \gamma},
    \\ \phi_\e = 0 & \text{ in } \partial \Omega_\e \bigcap \partial \Omega_{\e , \gamma},
  \end{cases}
\end{equation}
where $\Omega_{\e , \gamma}$ is the set defined in
\equ{omegaepsilongamma}. We now apply Theorem \ref{teouffa}, that
guarantees the existence of a sequence $\e_l \to 0$ and a constant
$c$ such that
\begin{equation}
\label{quattro} \| \tilde \phi_{\e_l} \|_{H^1_{\e_l}} \leq c
\e_l^{-\max \{2,k\}} \left[ \la_{\e_l} \| \tilde \phi_{\e_l}
\|_{L^2} + \| \nabla \eta_{\e_l} \nabla  \phi_{\e_l} \|_{L^2} + \|
\Delta \eta_{\e_l}  \phi_{\e_l} \|_{L^2} \right].
\end{equation}
Observe now that, in the region where $\nabla \eta_{\e_l} \not= 0$
and $\Delta \eta_{\e_l} \not= 0$, the function $V_{\e_l}$ can be
uniformly bounded $|V_\e (y) |\leq c \e $, with a positive constant
$c$, fact that follows directly from \equ{Vdef} and \equ{boh1}.
Furthermore, since we are assuming \equ{due}, we see that in the
region we are considering, namely where $\nabla \eta_{\e_l} \not= 0$
and $\Delta \eta_{\e_l} \not= 0$, the function $\phi_{\e_l}$
satisfies the equation $- \Delta \phi_{\e_l} + \e_l a_{\e_l} (y)
\phi_{\e_l} = 0$, for a certain smooth function $a_{\e_l}$, which is
uniformly positive and bounded as $\e_l \to 0$. Elliptic estimates
give that, in this region, $|\phi_{\e_l} |\leq c
e^{-\e_l^{\gamma'}}$, and $|\nabla \phi_{\e_l} | \leq c
e^{-\e_l^{\gamma'}}$ for some $\gamma'>0$ and $c>0$. Inserting this
information in \equ{quattro}, it is easy to see that
$$
\| \tilde \phi_{\e_l} \|_{H^1_{\e_l}} \leq c \e_l^{-\max \{2,k\}}
\la_{\e_l} \| \tilde \phi_{\e_l} \|_{H^1_{\e_l}}  (1+ o(1))
$$
where $o(1) \to 0$ as $\e_l \to 0$. Taking into account \equ{due}
the above inequality gives a contradiction with the fact that, for
all $\e$, the function $\phi_\e$ is not identically zero. This
concludes the prove of the claim.

\setcounter{equation}{0}
\section{Appendix: Proof of Lemma \ref{scaledlaplacian}}\label{alaplacian}

The proof is simply based on a Taylor expansion of the metric
coefficients in terms of the geometric properties of $\pa \O$ and
$K$, as in Lemma \ref{metricepsilon}. Recall that the Laplace-Beltrami
operator is given by~
\[ \Delta_{
g^\e}=\frac{1}{\sqrt{\det g^\e}}\,\partial_A(\,\sqrt{\det g^\e}\,(
g^\e)^{AB}\,\partial_B\,)\,,\] where indices $A$ and $B$ run between
$1$ and $n=N+k$. We can write~
\[{\mathcal A}_{\mu_\ve , \Phi_\e }=({g^\e})^{AB}\,\partial^2_{AB}+\partial_A\,({
g^\e})^{AB}\,\partial_B+\partial_A(\,\log{\sqrt{\det
g^\e}}\,)\,({g^\e})^{AB}\,\partial_B.\] Now, if $v$ and $W$ are  defined
as in \eqref{b}, one has
$$
\mu_{\varepsilon}^{\frac
{N+2}{2}}\pa^2_{X_iX_j}v=\pa^2_{\xi_i\xi_j}W(z,\xi)=\pa^2_{ij}W,
$$
$$
\mu_{\varepsilon}^{\frac
{N+2}{2}}\pa^2_{X_N}v=\pa^2_{\xi_N}W(z,\xi)=\pa^2_{NN}W,
$$
\begin{eqnarray*}
\mu_{\varepsilon}^{\frac
{N+2}{2}}\pa^2_{z_aX_l}v&=&-\frac{N}{2}\,\pa_{\bar a}\mu_\ve\,\pa_l W
+\mu_\e \,\pa^2_{\bar al}W
-\pa_{\bar a}\mu_\ve\xi_J\,\pa^2_{lJ}W-\pa_{\bar a}\Phi^j\,\pa^2_{lj}W
\end{eqnarray*}
and
\begin{eqnarray*}
\mu_{\varepsilon}^{\frac
{N+2}{2}}\pa^2_{z_az_b}v&=&\frac{N(N-2)}{4}\,\pa_{\bar a}\mu_\ve\,\pa_{\bar b}\mu_\ve
W
-\frac{N-2}{2}\mu_\ve \left(\pa_{\bar a} \mu_\ve\pa_{\bar b}W +\pa_{\bar b}\mu_\ve\pa_{\bar a}W \right)\\
&+&N\, \pa_{\bar a} \mu_\ve\,\pa_{\bar b} \mu_\ve \xi_J\pa_J W+\frac{N}{2}\,\{\pa_{\bar a} \mu_\ve\,\pa_{\bar b} \Phi^j+\pa_{\bar b} \mu_\ve\,\pa_{\bar a}
\Phi^j\}\,\pa_j W \\
&-&\frac{N-2}{2}\mu_\ve \pa^2_{{\bar a} {\bar b}}\mu_\ve W+\mu_\ve^2 \pa^2_{\bar a\bar b}W-\mu_\ve\,\pa^2_{\bar a \bar b}\mu_\ve \xi_{J}\,\pa_JW-\mu_\ve\,\pa^2_{\bar a
\bar b}\Phi^j \,\pa_jW\\
&-&\mu_\ve\bigg(\pa_{\bar b}\mu_\ve \xi_J\,\pa^2_{J\bar
a}W+\pa_{\bar a}\mu_\ve \xi_J\,\pa^2_{J\bar b}W\bigg)-
\mu_\ve\bigg(\pa_{\bar b}\Phi^j\,\pa^2_{j\bar
a}W+\pa_{\bar a}\Phi^j\,\pa^2_{j\bar b}W\bigg)
\\&+&\pa_{\bar a}\mu_\ve\,\pa_{\bar b}\mu_\ve\, \xi_{J}\,\xi_{L}\,\pa^2_{JL}W
+\left\{\pa_{\bar a}\mu_\ve\pa_{\bar b}\Phi^l+ \pa_{\bar b}\mu_\ve\pa_{\bar a}\Phi^l  \right\}\xi_J\pa^2_{Jl}W+\pa_{\bar a}\Phi^l\,\pa_{\bar b}\Phi^j \pa^2_{jl}W\\
&:=& \mathcal{A}_{ab}
\end{eqnarray*}
where $\partial_a = \partial_{y_a} $ and $\pa_{\bar a} = \pa_{z_a}$
and the indices $J$, $L$ run between $1$ and $N$ while as before the
indices $j$, $l$ run between $1$ and $N-1$.

\

\noindent Using the above expansions of the metric coefficients, we
easily see that~
\[
\begin{array}{rllll}
&\mu_{\varepsilon}^{\frac {N+2}{2}}({g^\e})^{AB}\,\partial^2_{AB}v=\partial^2_{ii}W+\partial^2_{NN}W+\mu_\ve^2\,(\tilde g^\e)^{ab}\pa^2_{ab}W
+2\, \mu_\ve \ve \xi_N\,H_{ij}\,\partial^2_{ij}W\\[3mm]
&+\left\{3\,\ve^2 \mu_\ve^2\, \xi_N^2(H^2)_{ij}-\frac{\ve^2}3\,R_{mijl}(\mu_\ve \xi_l+\Phi^l)(\mu_\ve \xi_m+\Phi^m)\right\}\partial^2_{ij}W\\[3mm]
&+2\,\mu_\ve \ve \xi_N\, \big(H_{ja}+(\tilde g^\e)^{ac}H_{cj}\big)\left( -\frac {N-2}{2}
\pa_{\bar a} \mu_\ve\,\pa_jW +\frac{\mu_\ve}{\ve}\pa^2_{\bar aj}W-
\pa_{\bar a}\mu_\ve \xi_L\,\pa^2_{jL}W-\pa_{\bar a}\Phi^l\,\pa^2_{jl}W\right)\\[3mm]
&+\mathcal{A}_{aa}+ \mathcal{B}_1(W)
\end{array}
\]
where
\begin{eqnarray*}
\mathcal{B}_1(v)&=&O \left(\ve^2  ( \mu_\ve \bar y  + \Phi )^2 + \ve^2 \mu_\ve
\xi_N(\mu_\ve \bar \xi
+ \Phi )+\ve^2  \mu_\ve^2 \xi_N^2 \right) \times\\
&\times& \left( -\frac{N}{2}\,\pa_{\bar a}\mu_\ve\,\pa_lW
+\frac{\mu_\ve}{\ve}\,\pa^2_{\bar al}W
-\pa_{\bar a}\mu_\ve\xi_J\pa^2_{lJ}W-\pa_{\bar a}\Phi^j\pa^2_{lj}W \right)\\
&+&O \left( \ve^3 |\mu_\ve \bar y +  \Phi |^3 + \ve^3 \mu_\ve \xi_N\, |\mu_\ve
\bar y +  \Phi |^2+ \ve^3 \mu_\ve^2 \xi_N^2|\mu_\ve \bar y +  \Phi |+ \ve^3
\mu_\ve^3 \xi_N^3  \right) \del^2_{ij} W
\\[3mm]
&+&O \left( | \mu_\ve \bar y  + \Phi | \ve + \ve \mu_\ve \xi_N ) \right)
\mathcal{A}_{ab}.
\end{eqnarray*}
Now recall the expansion of $\log({\det g^\e})$ given in Lemma  \ref{lemlogdet}
\begin{eqnarray*}
 \log\big(\det g^\e\big)& = & \log\big(\det \tilde g^\e\big)- 2\e X_N {\rm tr }\,(H )-2\e \G^b_{bk}\,X_k+ \frac{\e^2}3  R_{miil}x_m X_l\\&+&
 \e^2\,\bigg( (\tilde g^\e)^{ab}\, R_{mabl}-\G_{am}^{c}
\G_{cl}^{a}  \bigg)X_m X_l -\e^2 X_N^2 \,{\rm tr }\,(H^2)+ \mathcal{O}(\e^3|X|^3).
\end{eqnarray*}
Hence, differentiating with respect to $X_i$, $X_N$ and $z_a$ (and
performing the change of variables $z=\frac y\ve$ and
$\ov\xi=\frac{\ov X-\Phi}{\mu_\ve}$ and $\xi_N=\frac{
X_N}{\mu_\ve}$) one has
\begin{eqnarray*}
 \pa_{X_N}\log \sqrt{\det g^\e} &=& -\ve tr(H)
-2\,\mu_\ve \ve^2 \xi_N\,tr(H^2)+\mathcal{O}(|(\mu_\ve\xi+\phi)|^2 \ve^3), \\[3mm]
\pa_{X_j}\log \sqrt{\det g^\e} &=&  \ve^2
 \bigg(\frac{1}3
R_{mssj}+ (\tilde g^\epsilon)^{ab}\, R_{mabj}- \G_a^c(E_m)
\G_c^a(E_j)  \bigg)(\mu_\ve\xi_m+\Phi^m) \\
  &+& \mathcal{O}(\ve^3 |(\mu_\ve\xi+\Phi)|^2),
\end{eqnarray*}
and
\begin{eqnarray*}
  \pa_{z_a}\log \sqrt{\det g^\e} &=& \ve \mu_\ve\xi_N\,\pa_{\bar a} tr(H)+\mathcal{O}(\ve^2
  |(\mu_\ve\xi+\Phi)|^2).
\end{eqnarray*}
It then follows
\[
\begin{array}{rllll}
&\mu_\ve^\frac{N+2}{2}\pa_A\left(\log \sqrt{\det
g^\e}\right)\,g^{AB}\pa_B v= \ve \mu_\ve tr(H)\pa_N W
+2\mu_\ve \ve^2 \left(-\mu_\ve\xi_N\, tr(H^2)
\right)\pa_N W\\[3mm]
&+\mu_\ve \ve^2 \bigg(\frac13 R_{mssj}(\mu_\ve\xi_m+\Phi^m)+
 \left\{ (\tilde g^\e)^{ab}\, R_{mabj}- \G_a^c(E_m)
\G_c^a(E_j)  \right\}(\mu_\ve\xi_m+\Phi^m) \bigg)\pa_j W\\[3mm]
 & 
 +\mathcal{A}_{51}+
 \mathcal{B}_2(W),
\end{array}
\]
where
\begin{eqnarray*}
\mathcal{A}_{51}=-\ve^2 \mu_\ve^2\,\xi_N\,\pa_a tr(H) \, \left(
-\frac{N-2}{2}\,\pa_{\bar a}\mu_\ve\,W
+\mu_\ve\,\pa_{\bar a}W
-\left(\pa_{\bar a}\mu_\ve\xi_J\,\pa_{J}v+\pa_{\bar a}\Phi^j\,\pa_{j}W
\right)\right)
\end{eqnarray*}
\begin{eqnarray*}
 \mathcal{B}_2(W)&=&O \left( \ve^2 ( \mu_\ve \bar\xi  + \Phi )^2 + \ve^2 \mu_\ve \xi_N  (\mu_\ve \bar \xi
+ \Phi ) + \ve^2 \mu_\ve^2\xi_N^2 \right) (\ve \mu_\ve  \del_j W+ \ve\mu_\ve  \del_N W)\\
  &+& O \left(\ve^2 ( \mu_\ve \bar \xi + \Phi )^2 + \ve^2\mu_\ve \xi_N  (\mu_\ve \bar \xi
+ \Phi )  +\ve^2 \mu_\ve^2 \xi_N^2 \right) \times\\
&\times& \left( -\frac{N}{2}\,\pa_{\bar a}\mu_\ve\,\pa_lW
+\mu_\ve\,\pa^2_{\bar al}W -\left(\pa_{\bar a}\mu_\ve
\xi_J\pa^2_{lJ}W+\pa_{\bar a}\Phi^j\pa^2_{lj}W\right) \right).
\end{eqnarray*}
Finally, using the properties of the curvature tensor ($R_{iilj}=0$
and $R_{smsj}=-R_{mssj}$)
\begin{eqnarray*}
  \mu_\ve^\frac{N+2}{2}\pa_A (g^{AB})\pa_B v&=&\pa_i (g^{ij})\pa_jv+\pa_a (g^{ab})\pa_bv+\pa_a (g^{aj})\pa_jv+\pa_j (g^{aj})\pa_av \\
  &=& \frac{\ve^2}3\mu_\ve R_{liij}(\mu_\ve \xi_l+\Phi^l)\pa_j W+{\mathcal A}_{52} W+\mathcal{B}_3(W)
 \end{eqnarray*}
where we have set
\begin{eqnarray*}
{\mathcal A}_{52} W  &=&  \left(  \mathfrak{D}_j^a [\mu_\ve  \xi_j+
{\Phi}^j] \ve^2 + \ve \mu_\ve \xi_N\,
\mathfrak{D}_N^a   \right) \times \\[3mm]
&\times & \left\{ \mu_\ve \left[ -\ve D_{\ov\xi}\, W \, [\pa_{\bar a} \Phi ] +
\mu_\ve
 \del_{\bar a} W -\ve \del_{\bar a} \mu_\ve (\frac{N-2}{2} W + D_\xi W \,
[\xi] ) \right] \right\},
\end{eqnarray*}
where $\mathfrak{D}_N^j$ and $\mathfrak{D}_N^a$ are smooth functions
on the variable $z$, and where
\begin{eqnarray*}
 \mathcal{B}_3(W)&=&O \big( \ve^2 ( \mu_\ve \bar\xi  + \Phi )^2 + \ve^2\mu_\ve \xi_N  (\mu_\ve \bar \xi
+ \Phi ) + \ve^2 \mu_\ve^2\xi_N^2 \big)\,\mu_\ve \ve(\del_j W+\pa_{\bar a}W) \\
&+& \big( (( \mu_\ve \bar\xi  + \Phi )^2 +\mu_\ve \xi_N  (\mu_\ve
\bar \xi
+ \Phi ) + \mu_\ve^2\xi_N^2\big)\,\times \\
&\times & \left\{ \mu_\ve \left[ - \ve D_{\ov\xi}\, W \, [\pa_{\bar a} \Phi ] +
\mu_\ve
 \del_{\bar a} W - \ve \del_{\bar a} \mu_\ve (\frac N2 W + D_\xi W \,
[\xi] ) \right] \right\}.
\end{eqnarray*}
Collecting these formulas together and setting
$$
\mathcal{A}_0=\sum_{a=1}^k\mathcal{A}_{aa}, \qquad \mathcal{A}_5=
\mathcal{A}_{51}+\mathcal{A}_{52},
$$
and
 \begin{equation}\label{B}
 B(v)=\mathcal{B}_1(v)+\mathcal{B}_2(v)+\mathcal{B}_3(v),
\end{equation}
the result follows at once.

\medskip
\setcounter{equation}{0}
\section{Appendix: Proof of Theorem \ref{teo4.1}}\label{AA2}

\bigskip
The main ingredient to prove  theorem \ref{teo4.1} is the following

\bigskip

\begin{lemma}\label{lemma1} We assume the same assumptions as in Theorem \ref{teo4.1} and we use the same notations. Then there exists $\ve_0>0 $ such that for all
$0<\e<\e_0$, we have
\begin{equation} \label{leQ0}
E({\delta \over \, \mu_\e} {\mathcal T}_{\, \mu_\e ,\, \Phi_\e
} ( Z_0 ) \bar \chi_\e)= \ve^{-k} P_\e (\delta ),
\end{equation}
\begin{equation} \label{leQj}
E({d^j \over \, \mu_\e} {\mathcal T}_{\, \mu_\e ,\, \Phi_\e }
( Z_ j) \bar \chi_\e)= \e^{-k} Q_\e (d^j),
\end{equation}
\begin{equation} \label{leQ}
E({e\over \, \mu_\e} {\mathcal T}_{\, \mu_\e ,\, \Phi_\e } (Z)
\bar \chi_\e)= \e^{-k} R_\e (e).
\end{equation}
\end{lemma}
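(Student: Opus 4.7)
\medskip

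The plan is to compute each of the three energies separately by inserting the explicit form of the test functions into the expansion of $E$ provided by Lemma \ref{le5.1}. All three test functions share the structure $\phi = \alpha(\e z)\,\mu_\e^{-N/2}(\e z)\, Z_\star(\xi)\,\bar\chi_\e$ after the change of variables $\xi = (\mu_\e^{-1}(\bar X-\Phi_\e),\mu_\e^{-1}X_N)$, where $\alpha$ is $\delta$, $d^j$ or $e$, and $Z_\star$ is $Z_0$, $Z_j$ or $Z$ respectively. First I will rewrite $E$ in the $(z,\xi)$ variables: the Euclidean volume element acquires a factor $\mu_\e^N$, and by Lemma \ref{metricepsilon} the metric coefficients $g^\e$ differ from the flat one by $O(\e\mu_\e\xi_N)$ plus $O(\e^2(\mu_\e|\xi|+|\Phi_\e|)^2)$ corrections. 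After integrating out $\xi$ in the normal direction and using $\int_K dy = \e^k \int_{K_\e} dz$, each energy is reduced to a quadratic functional on $K$ in the profile $\alpha$.

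The leading order analysis exploits the linearized equations satisfied by $Z_0,Z_j,Z$. Namely, for $\phi=\alpha\,\mu_\e^{-N/2}Z_\star\bar\chi_\e$,
\[
\int|\nabla_X\phi|^2- pV_\e^{p-1}\phi^2 \;\approx\; \int \alpha^2 \mu_\e^{-N}\bigl(|\nabla_\xi Z_\star|^2 - pw_0^{p-1}Z_\star^2\bigr)\mu_\e^N\, d\xi \;+\; \mu_\e^2\int|\partial_a \alpha|^2 Z_\star^2\,d\xi + (\text{corrections}).
\]
For $Z_\star=Z_0$ or $Z_j$ the $\xi$-integrand vanishes identically, and for $Z_\star=Z$ it produces $-\lambda_0 \int \alpha^2 Z^2$. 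The term $\e\int\phi^2$ contributes, after using $\int w_0 Z_0 = -\int w_0^2$ (already computed in the proof of Lemma \ref{Construction}), the coefficient $B=-\int w_0 Z_0$ in \eqref{q0}. The $z$-derivative term $\int|\partial_a\alpha|^2 Z_\star^2$ gives the gradient terms in $P_\e,Q_\e,R_\e$; the constants $A_\e,C_\e,D_\e$ come from $\int Z_\star^2$ in the flat half-space, up to $o(1)$ corrections coming from the truncation and from the $\e$-dependence of the measure and of $\mu_\e(\e z)$. The factors $(1+o(\e^2)\beta_i^\e(y))$ encode the differentiation through $\mu_\e(\e z)$; the exponentially small term $e^{-\lambda_0\e^{-\gamma}/2}$ in $R_\e$ arises because $Z$ decays like $e^{-\sqrt{\lambda_0}|\xi|}$ and the cut-off $\bar\chi_\e$ truncates at radius $\sim\e^{-\gamma}$.

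The delicate piece is $Q_\e(d)$, where the Jacobi-type zeroth-order term $(R_{maal}-\Gamma_a^c(E_m)\Gamma_c^a(E_l))d^m d^l$ must be produced. This comes precisely from the curvature correction $\Xi_{ij}(\e z,X)=2\e H_{ij}X_N-\frac{\e^2}{3}R_{islj}X_lX_s$ appearing in \eqref{eqXiij} of Lemma \ref{le5.1}, together with the quadratic metric correction involving $\Gamma_{ai}^c$ in Lemma \ref{metricepsilon}. Because $Z_j(\xi)=\partial_j w_0(\xi)$ carries exactly one factor of $\xi_j/(1+|\xi|^2)$, the integration $\int_{\R^N_+} \Xi_{ij}\partial_i Z_l\partial_j Z_m\,d\xi$ produces, after symmetrization and use of the antisymmetries of $R$, precisely $\e^2 C\, R_{maal}\,d^m d^l$; a parallel computation with the Christoffel contribution in $g^\e_{ab}$ and with $\partial_{\bar a}[\mathcal{T}(Z_l)]$ produces the $\Gamma_a^c(E_m)\Gamma_c^a(E_l)$ piece. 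The cross-terms $\int Z_l Z_m$ with $l\ne m$ vanish by parity of $w_0$, isolating exactly the Jacobi operator structure.

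The remaining work is bookkeeping: tracking the error terms $B(\phi,\phi)$ of \eqref{defBB} in Lemma \ref{le5.1}, terms coming from $\partial_a\mu_\e$, $\partial_a\Phi_\e$ (which are $O(\e)$ by \eqref{bf2}--\eqref{bf3}), and showing they can all be absorbed into the compact perturbations $P_1,Q_1,R_1$. Cross-terms between $Z_0,Z_j,Z$ (hidden in $\partial_{\bar a}$ derivatives) vanish or are higher order thanks to parity in $\bar\xi$ and the $L^2(\R^N)$ orthogonality of the $Z_\star$'s; this is what guarantees the three functionals decouple, as stated. The main obstacle will be confirming that every term not explicitly written in $P,Q,R$ is indeed a small compact perturbation in $\mathcal{H}^1(K)$; this uses that the $\xi$-integrals of the dangerous factors converge because $N\ge 7$ gives enough decay of $w_0$ and its derivatives to make $\int (1+|\xi|)^{-2(N-2)}|\xi|^j\,d\xi$ finite for the relevant $j$.
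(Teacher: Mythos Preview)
Your direct-insertion approach matches the paper only for \eqref{leQ} (the $Z$ case), where the paper indeed computes $E(e\mu_\e^{-1}\mathcal T(Z)\bar\chi_\e)$ by plugging into Lemma~\ref{le5.1}. For \eqref{leQ0} and \eqref{leQj} the paper takes a genuinely different route: it identifies $\delta\mu_\e^{-1}\mathcal T_{\mu_\e,\Phi_\e}(Z_0)$ with $-\partial_\mu\mathcal T_{\mu,\Phi_\e}(\hat w)\big|_{\mu=\mu_\e}\cdot\delta$ (and similarly $Z_j$ with $\partial_{\Phi^j}$), and then computes $2E(\psi)=D^2F(V_\e)[\psi,\psi]$ as
\[
\big[DF(\mathcal T_{\mu_\e+t\delta,\Phi_\e}(\hat w))-DF(\mathcal T_{\mu_\e,\Phi_\e}(\hat w))\big][\psi]\,/\,(-t)\,,
\]
expanding $DF$ after the change of variables adapted to $\mu_\e+t\delta$. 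Because $\hat w$ is an approximate critical point of $F$, this trick produces directly the coefficient $\int w_0 Z_0=-B$ from the mass term, and the orthogonality relation defining $\mu_0$ (formula \eqref{poto}) causes the $H_{\alpha\alpha}$--$H_{ij}$ contributions to cancel automatically. The Jacobi tensor in $Q_\e$ appears in the same way from the $\Phi$-variation.

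Your sketch contains a concrete misidentification: the term $\e\int\phi^2$ with $\phi=\delta\mu_\e^{-N/2}Z_0$ equals $\e^{-k}\e\int_K\delta^2\int Z_0^2=\e^{-k}A\,\e\int_K\delta^2$, \emph{not} $B\,\e\int_K\delta^2$; recall $A=\int Z_0^2\neq\int w_0^2=B$. In a direct computation the coefficient $B$ can only emerge after combining this with the $O(\e)$ corrections coming from $\sqrt{\det g^\e}-1$, from the $\Xi_{ij}$ term, and from $p(\hat w^{p-1}-w_0^{p-1})=p(p-1)w_0^{p-2}w_{1,\e}+O(\e^2)$; the last of these requires using the explicit equation satisfied by $w_{1,\e}$ together with the choice of $\mu_0$. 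You do not address any of these corrections, so as written your argument does not recover the stated $P_\e$. The same issue affects your derivation of $Q_\e$: the curvature/Christoffel pieces do not arise solely from $\Xi_{ij}$ acting on $\partial_i Z_l\partial_j Z_m$, but from the full $t$-expansion of $DF$ along $\Phi_\e\mapsto\Phi_\e+td$, which again uses that $DF(V_\e)\approx 0$ to kill the would-be leading terms.
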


\begin{proof}[Proof of Lemma \ref{lemma1}]
Define
\begin{eqnarray}
\label{defFunctionalF}
 F (u ) :&=&\int_{K_\e \times \hat \CC_\e}
\left(\frac12  |\nabla_X u|^2 +\frac12 \e u^2 - \frac1{p+1} u^{p+1}
\right) \sqrt{\det(g^\e)} \, dz \, dX \nonumber\\
&+&\int_{K_\e \times \hat \CC_\e} \frac12\,\Xi_{ij} (\e z ,
X)\,\partial_{i} u \partial_{j} u\sqrt{\det(g^\e)} \, dz \, dX\\
 &+&\frac12 \int_{K_\e \times \hat \CC_\e}  \pa_{\bar a}u \,\pa_{\bar a}u\sqrt{\det(g^\e)} \, dz \, dX
  + \int_{K_\e \times \hat \CC_\e} B(u,u)\sqrt{\det(g^\e)} \, dz \,
  dX.\nonumber
\end{eqnarray}
We refer to Lemma \ref{le5.1} for the definitions of the objects appearing in \equ{defFunctionalF}.

 \smallskip

\noindent {\textbf{Step 1:}} {\it Proof of \equ{leQ0}}. Given a
small $t \not= 0$, a Taylor expansion gives that
\begin{eqnarray}\label{tere1}
&&\left[ DF({\mathcal T}_{\, \mu_\e + t \delta , \, \Phi_\e }
(\hat w ) \bar \chi_\e) - DF({\mathcal T}_{\, \mu_\e  , \bar
\Phi_\e } (\hat w ) \bar \chi_\e )\right] ({\delta \over \bar
\mu_\e} {\mathcal T}_{\, \mu_\e , \, \Phi_\e} (Z_0 ) \bar
\chi_\e)\\
&=&-t \, D^2 F \left( {\mathcal T}_{\, \mu_\e , \, \Phi_\e}
(\hat w) \bar \chi_\e \right) \left[ {\delta \over \, \mu_\e}
{\mathcal T}_{\, \mu_\e , \, \Phi_\e} (Z_0 ) \bar \chi_\e
\right] (1+
O(t))\\
&=& -2t E ({\delta \over \, \mu_\e} {\mathcal T}_{\, \mu_\e ,
\, \Phi_\e} (Z_0 ) \bar \chi_\e ) (1+ O(t)).
\end{eqnarray}

On the other hand, we write for any $\psi$
\begin{equation}
\label{tere2} \left[ DF({\mathcal T}_{\, \mu_\e + t \delta , \bar
\Phi_\e } (\hat w )\bar \chi_\e) - DF({\mathcal T}_{\, \mu_\e  ,
\, \Phi_\e } (\hat w ) \bar \chi_\e )\right] ( \psi) =
\mathfrak{a} (t) - \mathfrak{a} (0)  + \mathfrak{b}(t) +
\mathfrak{c}(t)
\end{equation}
where
\begin{eqnarray*}
\mathfrak{a}(t) &= &\int_{K_\e \times \hat \CC_\e} \left( \nabla_X
{\mathcal T}_{\, \mu_\e + t \delta , \, \Phi_\e } (\hat w ) \bar
\chi_\e \right) \nabla_X \psi + \ve {\mathcal T}_{\, \mu_\e + t
\delta , \, \Phi_\e } (\hat w ) \bar \chi_\e \psi -
\left({\mathcal T}_{\, \mu_\e + t \delta , \, \Phi_\e } (\hat w
) \bar \chi_\e \right)^p \psi
\\
&+& \int_{K_\e \times \hat \CC_\e} \Xi_{ij} (\e z, X) \partial_i
\left( {\mathcal T}_{\, \mu_\e + t \delta , \, \Phi_\e } (\hat w
)\bar \chi_\e \right)
\partial_j \psi,
\end{eqnarray*}
$$
\mathfrak{b}(t)=\int_{K_\e \times \hat \CC_\e} \partial_{\bar a} (
{\mathcal T}_{\, \mu_\e + t \delta , \, \Phi_\e } (\hat w )\bar
\chi_\e )
\partial_{\bar a} \psi - \int_{K_\e \times \hat \CC_\e} \partial_{\bar a} ( {\mathcal T}_{\bar
\mu_\e  , \, \Phi_\e } (\hat w ) \bar \chi_\e ) \partial_{\bar a}
\psi
$$
and
$$
\mathfrak{c}(t)= \int_{K_\e \times \hat \CC_\e} B({\mathcal T}_{\bar
\mu_\e + t \delta , \, \Phi_\e } (\hat w ) \bar \chi_\e , \psi) -
\int_{K_\e \times \hat \CC_\e} B({\mathcal T}_{\bar  \mu_\e  , \bar
\Phi_\e } (\hat w ) \bar \chi_\e , \psi).
$$
\smallskip
We now compute $\mathfrak{a} (t) $ with $\psi = {\delta \over \bar
\mu_\e} {\mathcal T}_{\, \mu_\e + t \delta , \, \Phi_\e } (Z_0 )
\bar \chi_\e $. Performing the change of variables $\bar X = (\bar
\mu_\e + t \delta )\bar \xi + \, \Phi_\e$, $X_N = ( \, \mu_\e +t
\delta )\xi_N$ in the integral $\mathfrak{a} (t)$ and using
\equ{expdeterminante} together with the definition of $\bar
\chi_\ve$ in \equ{chibar}, we get
\begin{eqnarray}\label{I1}
\mathfrak{a} (t) &=&\left[ -\int {\delta \over \, \mu_\e} [ \nabla
\hat w \nabla Z_0 +\e (\, \mu_\e + t \delta )^2 \hat w Z_0
- \hat
w^p Z_0] (1+ \e (\, \mu_\e + t \delta ) \xi_N H_{\alpha \alpha} +
\e^2 O(|\xi|^2)) \right.\nonumber\\
 &+ & \left.    -\int {\delta \over \, \mu_\e} [-2\e (\, \mu_\e +t
\delta ) \xi_N H_{ij} + \e^2 O(|\xi|^2) ] \partial_i \hat w
\partial_j Z_0 \right] \times\\
&\times&\big(1+ O(t)\big) \bigg(1+ O(\ve) + O(\ve^{\gamma
(N-4)})\bigg).\nonumber
\end{eqnarray}
Thus we see immediately from \equ{I1} that
\begin{eqnarray*}
t^{-1} \left[ \mathfrak{a}(t)-\mathfrak{a}(0) \right] &=& \left[ 2\e
\int \delta^2 \hat w Z_0 - \int \e  {\delta^2 \over \, \mu_\e} [
\nabla \hat w \nabla Z_0 +\e \, \mu_\e^2 \hat w Z_0 - \hat w^p Z_0
] \xi_N H_{\alpha
\alpha}  \right.\\
\\
&+&\left. 2\e \int {\delta^2 \over \, \mu_\e} \xi_N H_{ij}
\partial_i \hat w \partial_j  Z_0 \right] \big(1+ O(t)\big) \bigg(1+ O(\ve)
+O(\ve^{\gamma (N-4)})\bigg).
\end{eqnarray*}
Integrating by parts in the $\xi$ variables and using the fact that
$\hat {\mathcal C}_\ve \to \R^N_+$ as $\ve \to 0$ one can write
\begin{eqnarray*}
&&t^{-1} \left[ \mathfrak{a}(t)-\mathfrak{a}(0) \right] = \left[ 2\e
\int \delta^2 \hat w Z_0 - \int \e  {\delta^2 \over \, \mu_\e} [
-\Delta \hat w  +\e \, \mu_\e^2 \hat w  - \hat w^p  ] Z_0 \xi_N
H_{\alpha \alpha}  \right.
\\
&+&\left. \int \e {\delta^2 \over \, \mu_\e}\hat w Z_0 H_{\alpha
\alpha} - 2\e \int {\delta^2 \over \, \mu_\e} \xi_N H_{ij}
\partial_{ij}  \hat w  Z_0 \right] \big(1+ O(t)\big)\bigg(1+ O(\ve) +O(\ve^{\gamma
(N-4)}) \bigg).
\end{eqnarray*}
Now using the fact that $ \|  -\Delta \hat w  +\e \, \mu_\e^2 \hat
w - \hat w^p \|_{\e , N-2} \leq C \e^3$ we get
\begin{eqnarray*}
t^{-1} \left[ \mathfrak{a}(t)-\mathfrak{a}(0) \right] &= & \bigg[
2\e \int \delta^2 \hat w Z_0 +\e [ \int   {\delta^2 \over \mu_0} (
\int_{\R^N_+} H_{\alpha \alpha} \hat w  Z_0 \xi_N -  2 \int_{\R^N_+}
H_{i j } \partial_{ij} \hat w  Z_0 )]\\
&& +\e^2 Q(\delta) \bigg] \,\big(1+ O(t)\big) \bigg(1+
O(\ve)+O(\ve^{\gamma (N-4)})\bigg).
\end{eqnarray*}

Since $N> 6$, we can choose $\gamma = 1-\sigma$, $\sigma >0$ so that
$\gamma (N-4)
>2$. Thanks to the definition of $\mu_0$ given in \equ{mu}, we
conclude that
\begin{equation}
\label{I-II} \mathfrak{a}(t)-\mathfrak{a}(0)= t \e^{-k} \left[ - B\e
\int_K \delta^2  + O(\e^2 ) Q(\delta ) \right] \big(1+ O(t)\big)
\big(1+ O(\ve)\big)
\end{equation}
where
$$-B= (\int_{\R^N_+} w Z_0 ) <0 \quad \hbox{ and }\quad Q(\delta)=\int_K \kappa(y) \delta^2
$$
 for some smooth and uniformly bounded (as $\e \to 0$) function $\kappa$ defined on $K$.

Observe that $\partial_\mu {\mathcal T}_{\mu , \Phi} (\hat w ) =
-{1\over \mu} {\mathcal T}_{\mu , \Phi} ( Z_0 ) (1+ \ve R_0 (z, \xi)
)$, where $R_0$ is a smooth function of the variables $(z, \xi)$,
uniformly bounded in $z$ and satisfying
$$
| R_0(y, \xi) | \leq {C \vartheta(y) \over (1+ |\xi|^{N-2})}
$$
for some positive constant $C$ independent of $\e$, and some generic
function $\vartheta(y)$ defined on $K$, smooth and uniformly bounded
as $\e \to 0$. Hence, recalling the definition of the function
$\mathfrak{b}$ above, a Taylor expansion gives
$$
\mathfrak{b}(t) = -t \int_{K_\e \times \hat \CC_\e} |\partial_{\bar
a} ({\delta \over \, \mu_\e} {\mathcal T}_{\, \mu_\e , \bar
\Phi_\e} ( Z_0 ) \bar \chi_\e  )|^2  (1+ O(t))  .
$$
Observe now that \begin{eqnarray*}
\partial_{\bar a} \left( {\delta \over \, \mu_\e} {\mathcal T}_{\, \mu_\e , \, \Phi_\e} (Z_0 ) \bar \chi_\e \right)
&=& (\partial_{\bar a} \delta ) {1 \over \, \mu_\e} {\mathcal
T}_{\, \mu_\e , \, \Phi_\e} (Z_0 )  \bar \chi_\e  + \delta
\partial_{\bar a} ( {1 \over \, \mu_\e} {\mathcal T}_{\, \mu_\e
, \, \Phi_\e} (Z_0 ) \bar \chi_\e  )
\\
&=& \e ( \partial_a \delta ) {1 \over \, \mu_\e} {\mathcal
T}_{\, \mu_\e , \, \Phi_\e} (Z_0 )\bar \chi_\e   + \e \delta (
\partial_a \, \mu_\e ) \partial_{\, \mu_\e} ({1 \over \bar
\mu_\e} {\mathcal T}_{\, \mu_\e , \, \Phi_\e} (Z_0 ) \bar
\chi_\e )\\
 &+& \e \delta ( \partial_a \, \Phi_\e ) \partial_{\bar
\Phi_\e}  ({1 \over \, \mu_\e} {\mathcal T}_{\, \mu_\e , \bar
\Phi_\e} (Z_0 ) \bar \chi_\e ).
\end{eqnarray*}
Since $ \int \left( {1 \over \, \mu_\e} {\mathcal T}_{\, \mu_\e
, \, \Phi_\e} (Z_0 ) \bar \chi_\e  \right)^2 \, dX= A (1+ o(\e)
)$, we conclude that
\begin{equation}
\label{III1} \mathfrak{b}(t) = - t \e^{-k} \left[ A_\e \e^2 \int_K
|\partial_a ( \delta (1+ o(\e^2 ) \beta_1^\e (y)  )) |^2 \right]
\end{equation}
where $A_\e \in \R$, $\lim_{\e \to 0} A_\e = A= \int_{\R^N_+} Z_0^2
$ and $\beta_1^\e$ is an explicit smooth function in $K$, which is
uniformly bounded as $\e \to 0$.   Finally we observe that the last
term $\mathfrak{c}(t)$ defined above is of lower order, and can be
absorbed in the terms described in \equ{I-II} and \equ{III1}.

The expansion \equ{leQ0} clearly holds from
\equ{tere1}-\equ{tere2}-\equ{I-II} and \equ{III1}.

\medskip

\noindent {\textbf{Step 2:}} {\it Proof of \equ{leQj}}. To get the
expansion in \equ{leQj} we argue in the same spirit as before. Let
$d $ be the vector field along $K$ defined by $d (\ve z) = (d^1 (\ve
z ) , \ldots , d^{N-1} (\ve z) )$. For any $t$ small and $t\not= 0$,
we have (see \equ{defFunctionalF})
 \begin{eqnarray*} &&\left[ DF
({\mathcal T}_{\, \mu_\ve , \, \Phi_\ve + t d} (\hat w) \bar
\chi_\e) - DF ({\mathcal T}_{\, \mu_\ve , \, \Phi_\ve } (\hat w)
\bar \chi_\e )\right] [\varphi] = \\
&& t D^2 F ({\mathcal T}_{\, \mu_\ve , \, \Phi_\ve } (\hat w)
\bar \chi_\e ) \left[\sum_l {d^l \over \, \mu_\ve} {\mathcal
T}_{\, \mu_\ve , \, \Phi_\ve} (Z_l ) \bar \chi_\e \right]
[\varphi] \,\big(1+ O(t)\big) \big(1+ O(\ve)\big)
\end{eqnarray*}
for any function $\varphi \in H^1_\ve$. In particular, choosing
$\varphi = {d^j \over \, \mu_\ve} {\mathcal T}_{\, \mu_\ve ,
\, \Phi_\ve} (Z_j) \bar \chi_\e$ we get (using the fact that
$\int_{\R^N_+} Z_j Z_l = C_0 \delta_{jl}$) that
\begin{eqnarray}
&&\left[ DF ({\mathcal T}_{\, \mu_\ve , \, \Phi_\ve + t d} (\hat
w) \bar \chi_\e) - DF ({\mathcal T}_{\, \mu_\ve , \, \Phi_\ve }
(\hat w) \bar \chi_\e )\right] [{d^j \over \, \mu_\ve} {\mathcal
T}_{\, \mu_\ve , \, \Phi_\ve} (Z_j)\bar \chi_\e]
\label{leQj1}=\nonumber\\
&& 2 t E ({d^j \over \, \mu_\ve} {\mathcal T}_{\, \mu_\ve ,
\Phi_\ve} (Z_j)\bar \chi_\e)
  (1+ O(t)) (1+ O(\ve)).
\end{eqnarray}
On the other hand, as in the previous step,  we write
\begin{eqnarray}
&&\label{leQj2} \left[ DF ({\mathcal T}_{\, \mu_\ve , \, \Phi_\ve
+ t d} (\hat w) \bar \chi_\e) - DF ({\mathcal T}_{\, \mu_\ve ,
\, \Phi_\ve } (\hat w) \bar \chi_\e )\right] [{d^j \over \bar
\mu_\ve} {\mathcal T}_{\, \mu_\ve , \, \Phi_\ve} (Z_j) \bar
\chi_\e]=\nonumber\\&& \mathfrak{a}_2(t) - \mathfrak{a}_2(0) + \mathfrak{b}_2(t)
+ \mathfrak{c}_2(t)
\end{eqnarray}
where we have set, for $\psi = {d^j \over \, \mu_\ve} {\mathcal
T}_{\, \mu_\ve , \, \Phi_\ve} (Z_j) \bar \chi_\e$,
\begin{eqnarray*}
\mathfrak{a}_2(t)& =& \int \left( \nabla_X {\mathcal T}_{\, \mu_\e
, \, \Phi_\e +t d } (\hat w )  \bar \chi_\e \right) \nabla_X \psi
+ \ve {\mathcal T}_{\, \mu_\e  , \, \Phi_\e +t d } (\hat w )
\bar \chi_\e \psi - \left({\mathcal T}_{\, \mu_\e  , \, \Phi_\e
+t d}
(\hat w \bar \chi_\e )\right)^p \psi\\
&+& \int \Xi_{ij} (\e z, X)
\partial_i \left( {\mathcal T}_{\, \mu_\e  , \, \Phi_\e +t d }
(\hat w ) \bar \chi_\e \right) \partial_j \psi,
\end{eqnarray*}
$$
\mathfrak{ b}_2(t) =\int \partial_{\bar a} ( {\mathcal T}_{\bar
\mu_\e  , \, \Phi_\e +t d} (\hat w ) \bar \chi_\e) \partial_{\bar
a} ( {\mathcal T}_{\, \mu_\e  , \, \Phi_\e +t d } (\hat w ) \bar
\chi_\e) - \int \partial_{\bar a} ( {\mathcal T}_{\, \mu_\e  ,
\, \Phi_\e } (\hat w ) \bar \chi_\e) \partial_{\bar a} ( {\mathcal
T}_{\, \mu_\e , \, \Phi_\e } (\hat w ) \bar \chi_\e)
$$
and
$$
\mathfrak{c}_2 (t)= \int B({\mathcal T}_{\bar  \mu_\e  , \bar
\Phi_\e +t d } (\hat w ) \bar \chi_\e , \psi) - \int B({\mathcal
T}_{\bar  \mu_\e , \, \Phi_\e } (\hat w ) \bar \chi_\e , \psi).
$$
We now compute $\mathfrak{a}_2 (t) $ with $\psi = {\delta \over \bar
\mu_\e} {\mathcal T}_{\, \mu_\e  , \, \Phi_\e } (Z_j ) $.
Defining the tensor ${\mathcal R}_{ml}$ by
$${\mathcal R}_{ml}=\bigg( (\tilde g^\e)^{ab}\,R_{mabl}-\G_a^c(E_m)
\G_c^a(E_l) \bigg),$$ performing the change of variables $\bar X =
(\, \mu_\e + t \delta )\bar \xi + \, \Phi_\e$, $X_N = ( \bar
\mu_\e +t \delta )\xi_N$ in the integral $\mathfrak{a}_2 (t)$, using
\equ{expdeterminante}, \equ{energydensity} and recalling the
definition of the cut-off function  $\bar \chi_\e$, we get
\begin{eqnarray*}
&&\mathfrak{a}_2(t) = \bigg\{ \int {d^j \over \, \mu_\ve} \bigg[
\nabla \hat w \nabla Z_j +\ve \, \mu_\ve^2 \hat w Z_j - p \hat w^p
Z_j\bigg] \times \bigg[ 1- \ve \, \mu_\ve \xi_N H_{\alpha \alpha}
\\
&&  +\ve^2 ({R_{miil} \over 6} + {{\mathcal R}_{ml} \over 2}) (\ve
\, \mu_\ve \xi_m +\, \Phi_{\ve m} + t d^m )  (\ve \bar \mu_\ve \xi_l
+\, \Phi_{\ve l} + t d^l ) + O(\ve^3 |\xi|^3 ) \bigg]
\\
&+& \int {d^j \over \, \mu_\ve} \bigg[  2 \ve \xi_N H_{ir} -{\ve^2
\over 3} R_{imlr} (\, \mu_\ve \xi_m +\, \Phi_{\ve m} + t d^m
) (\, \mu_\ve \xi_l +\, \Phi_{\ve l} + t d^l ) \\
&&\qquad \qquad+ O(\ve^3 |\xi|^3 ) \bigg] \partial_i \hat w
\partial_r Z_j \bigg\} \times
\\&&\times\bigg(1+ O(t)\bigg) \,\bigg(1+ O(\ve ) + O(\e^{\gamma
(N-3)})\bigg).
\end{eqnarray*}
Thus we immediately get (using the fact that $\gamma (N-3) >1$)
\begin{eqnarray*}
&&t^{-1} [\mathfrak{a}_2(t) - \mathfrak{a}_2(0) ] = \ve^2 \left\{
\int {d^j \over \, \mu_\ve} [ \nabla \hat w \nabla Z_j +\ve \bar
\mu_\ve^2 \hat w Z_j - p \hat w^p Z_j ] \right.\times
\\
&&\times ({R_{mijl} \over 6} + {{\mathcal R}_{lm} \over 2} ) [( \bar
\mu_\ve \xi_m +\, \Phi_{\ve m} ) d^l +  (\, \mu_\ve \xi_l +\bar
\Phi_{\ve l}) d^m ]
\\
&&\left. - \int {d^j \over \, \mu_\ve} {R_{ilmr} \over 3} [(\bar
\mu_\ve \xi_m  +\, \Phi_{\ve m} ) d^l +  ( \, \mu_\ve \xi_l
+\, \Phi_{\ve l}) d^m ]
\partial _i \hat w \partial_r Z_j \right\}  (1+ O(\ve )) (1+ O(t)).
\end{eqnarray*}
Integration by parts in the $\xi$ variables and using the fact that
$\hat {\mathcal C}_\ve \to \R^N_+$ as $\ve \to 0$, we get
\begin{eqnarray*}
&&t^{-1} [\mathfrak{a}_2(t) - \mathfrak{a}_2(0) ] = \ve^2 \left\{
\int{d^j \over \, \mu_\ve} [-\Delta \hat w +\ve \, \mu_\ve \hat
w - p \hat w ] Z_j\right.\times\\
&&\left. \times ({R_{mijl} \over 6} + {{\mathcal R}_{lm} \over 2} )
[( \, \mu_\ve \xi_m +\, \Phi_{\ve m} ) d^l + (\, \mu_\ve \xi_l
+\, \Phi_{\ve l}) d^m ]\right.
\\
&&\left. - \int ({R_{mijl} \over 6} + {{\mathcal R}_{lm} \over 2} )
d^j  [  \partial_l \hat w d^m +  \partial_m \hat w d^l ] Z_j + \int
d^j [{R_{ilrr} \over 3}  d^l + {R_{irmr} \over 3} d^m ] Z_i  Z_j
\right\}\times \\
&&\times \big(1+ O(\ve )\big) \big(1+ O(t)\big).
 \end{eqnarray*}
Now using the fact that $ \| -\Delta \hat w  +\e \, \mu_\e^2 \hat
w - \hat w^p \|_{\e , N-2} \leq C \e^3$ and that $R_{ilrr} = 0$, we
deduce that
\begin{eqnarray}\label{leQj3}
t^{-1} [\mathfrak{a}_2(t) - \mathfrak{a}_2(0) ] &= &\ve^2 \left\{ -
C \int_{K_e} ({R_{miij} \over 3} + {{\mathcal R}_{mj} \over 2} ) d^j
d^m   +C \int_{K_\e}  {R_{jrrm} \over 3} d^m  d^j \right\} \times\nonumber\\
&\times& \big(1+ o(\ve )\big) \,\big(1+ O(t)\big)
\\&=& \ve^{-k} \ve^2 \left[
- C \int {{\mathcal R}_{mj} \over 2}  d^j d^m + O(\e) Q(d) \right]
\big(1+ O(t)\big)\nonumber
\end{eqnarray}
where here we have set
$$
C= \int_{\R^N_+} Z_1^2 \qquad \hbox{ and } \quad Q(d):=\int_K \pi(y)
d^i d^j
$$
for some smooth and uniformly bounded (as $\e \to 0$) function
$\pi(y)$.  To estimate the term $\mathfrak{b}_2$ above we argue as
in \equ{III1}, we get that
\begin{equation}
\label{leQj4} t^{-1} \mathfrak{b}_2(t) = -\ve^{-k} \left[ \e^2 C_\e
\int_K |\partial_a (d^j (1+ \beta_2^\e (y) o(\e^2) ))|^2  \right]
(1+ O(t)).
\end{equation}
Finally we observe that the last term $\mathfrak{c}_2(t)$ is of
lower order, and can be absorbed in the terms described in
\equ{leQj3} and \equ{leQj4}. We get the expansion \equ{leQj} from
\equ{leQj1}-\equ{leQj2}-\equ{leQj3} and \equ{leQj4}.

\medskip
\noindent{ \textbf{Step 3}:} {\it Proof of \equ{leQ}}. To get the
expansion in \equ{leQ}, we compute
\begin{equation}
\label{tere3} E({e\over \, \mu_\e} {\mathcal T}_{\, \mu_\e ,
\, \Phi_\e} (Z)) = I + II + III
\end{equation}
where
\begin{eqnarray*}
I&=& \int_{K_\e \times \hat \CC_\e} {\delta^2 \over  \mu_\e^2}
\left(\frac12 ( |\nabla_X {\mathcal T}_{ \mu_\e , \Phi_\e} (Z) |^2
+\e {\mathcal T}_{\mu_\e ,  \Phi_\e} (Z)^2 - p V_\e^{p-1} {\mathcal
T}_{ \mu_\e ,  \Phi_\e} (Z)^2 ) \right) \sqrt{\det g^\e} dz dX
\\
&+&\int_{K_\e \times \hat \CC_\e} {\delta^2 \over \, \mu_\e^2}
\frac12\,\Xi_{ij} (\e z , X) \,\partial_{i} {\mathcal T}_{\bar
\mu_\e , \, \Phi_\e} (Z) \partial_{j}{\mathcal T}_{\, \mu_\e , \,
\Phi_\e} (Z) \,\sqrt{\det g^\e} \, dz \, dX,
\\[3mm]
II&=& \frac12 \int_{K_\e \times \hat \CC_\e} \pa_{\bar a} \left(
{e\over \, \mu_\e} {\mathcal T}_{\, \mu_\e , \, \Phi_\e} (Z) \right)
\,\pa_{\bar a} \left( {e\over \, \mu_\e} {\mathcal T}_{\, \mu_\e ,
\, \Phi_\e} (Z) \right)\,\sqrt{\det g^\e} \, dz
\, dX\\[3mm]
\hbox{and}\\
III&=& \int_{K_\e \times \hat \CC_\e} B({e\over \, \mu_\e}
{\mathcal T}_{\, \mu_\e , \, \Phi_\e} (Z), {e\over \, \mu_\e}
{\mathcal T}_{\, \mu_\e , \, \Phi_\e} (Z))\,\sqrt{\det(g^\e)} \,
dz \, dX.
\end{eqnarray*}
Using the change of variables $\bar X = \, \mu_\e \bar \xi + \bar
\Phi_\e$, $X_N = \, \mu_\e \xi_N$ in $I$, we can write
$$
I= \int {1\over 2} {\delta^2 \over \, \mu_\e^2} \bigg[ |\nabla
Z|^2 - p \hat w^{p-1} Z^2 +\e \, \mu_\e^2 Z^2 \bigg] \bigg(1+ \e
O(e^{-|\xi|})\bigg).
$$
Then, recalling the definition of $\la_0$ in \equ{lambda0}, we get
\begin{equation} \label{tere4}
I=\e^{-k} \left[  -{\la_0 \over 2} D \int_K e^2 + \e Q(e) \right]
\end{equation}
where we have set
$$D= \int_{\R^N_+} Z^2(\xi)\,d\xi \quad \hbox{and }\qquad Q(e):= \int_K \tau(y)
e^2\,dy,$$ for some  smooth and uniformly bounded, as $\ve \to 0$,
function $\tau$. On the other hand, using a direct computation and
arguing as in \equ{III1}, we get
\begin{equation} \label{ttere4}
II= {D_\e \over 2} \int_{K_\e} |\partial_{\bar a} e  + e^{-\la_0
\ve^{-\gamma}} \beta_3^\e (\e z) e |^2 =
 \ve^{-k}
 \bigg[ {D_\e \over 2} \ve^2 \int_{K} |\partial_{ a} (e (1+ e^{-\la'\ve^{-\gamma}} \beta_3^\e (y) )) |^2 \bigg]
\end{equation}
where $\beta_3^\e$ is an explicit smooth function on $K$, which is
uniformly bounded as $\e \to 0$, while $\lambda'$ is a positive real
number. Finally we observe that the last term $III$ is of lower
order, and can be absorbed in the terms described in \equ{tere4} and
\equ{ttere4}. This concludes the proof of \equ{leQ}.
\end{proof}

\medskip

\noindent We have now the elements to prove Theorem \ref{teo4.1}.

\begin{proof}[\rm\textbf{Proof of Theorem \ref{teo4.1}}] Given the result in Lemma \ref{lemma1}, we can write
\begin{eqnarray*}
{\mathcal M} (\phi^\bot , \delta , d, e) &=& E(\phi ) - E(\phi^\bot
) -
 E({\delta \over \, \mu_\e} {\mathcal T}_{\, \mu_\e ,\, \Phi_\e } ( Z_0 ) \bar \chi_\e)
 - \sum_{j=1}^{N-1} E({d_j \over \, \mu_\e} {\mathcal T}_{\, \mu_\e ,\, \Phi_\e } ( Z_j ) \bar
 \chi_\e)\\
&-& E({e \over \, \mu_\e} {\mathcal T}_{\, \mu_\e ,\, \Phi_\e
} ( Z ) \bar \chi_\e).
\end{eqnarray*}
Thus it is clear that the term ${\mathcal M}$ recollects all the
mixed terms in the expansion of $E(\phi)$. Indeed, if we define
\begin{eqnarray*}
m(f,g)& =& \int_{K_\e \times \hat \CC_\e} \left(\nabla_X f \nabla_X
g
 +\e fg  - p V_\e^{p-1} fg    \right) \sqrt{\det(g^\e)} \, dz \, dX
\\
&+&\int_{K_\e \times \hat \CC_\e} \,\Xi_{ij} (\e z ,
X)\,\partial_{i}f\partial_{j}g\,\sqrt{\det(g^\e)} \, dz \, dX\\
&+& \int_{K_\e \times \hat \CC_\e} \pa_{\bar a}f\,\pa_{\bar
a}g\,\sqrt{\det(g^\e)} \, dz \, dX+ \int_{K_\e \times \hat \CC_\e}
 B(f,g)\,\sqrt{\det(g^\e)} \, dz \, dX
\end{eqnarray*}
for $f$ and $g$ in $H^1_\e$, then
\begin{eqnarray}\label{auxi}
{\mathcal M}(\phi^\bot , \delta , d , e) &=& m (\phi^\bot , {\delta
\over \, \mu_\e} {\mathcal T}_{\, \mu_\e ,\, \Phi_\e } ( Z_0 )
\bar \chi_\e ) +\sum_j  m (\phi^\bot , {d_j \over \, \mu_\e}
{\mathcal T}_{\, \mu_\e ,\, \Phi_\e } ( Z_j ) \bar \chi_\e
)\nonumber
\\
&+& m (\phi^\bot , {e \over \, \mu_\e} {\mathcal T}_{\, \mu_\e
,\, \Phi_\e } ( Z ) \bar \chi_\e ) + \sum_j m ({\delta \over \bar
\mu_\e} {\mathcal T}_{\, \mu_\e ,\, \Phi_\e } ( Z_0 ) \bar
\chi_\e , {d^j \over \, \mu_\e} {\mathcal T}_{\, \mu_\e ,\bar
\Phi_\e } ( Z_j ) \bar \chi_\e )\nonumber
\\
&+&\sum_{i\not= j} m ({d^j \over \, \mu_\e} {\mathcal T}_{\bar
\mu_\e ,\, \Phi_\e } ( Z_j ) \bar \chi_\e , {d_i \over \bar
\mu_\e} {\mathcal T}_{\, \mu_\e ,\, \Phi_\e } ( Z_i ) \bar
\chi_\e )
\\
&+& m ({\delta \over \, \mu_\e} {\mathcal T}_{\, \mu_\e ,\bar
\Phi_\e } ( Z_0 ) \bar \chi_\e , {e \over \, \mu_\e} {\mathcal
T}_{\, \mu_\e ,\, \Phi_\e } ( Z ) \bar \chi_\e )\nonumber\\
& +& \sum_j m ({d^j \over \, \mu_\e} {\mathcal T}_{\, \mu_\e ,\,
\Phi_\e } ( Z_j ) \bar \chi_\e , {e \over \, \mu_\e} {\mathcal
T}_{\, \mu_\e ,\, \Phi_\e } ( Z ) \bar \chi_\e ).\nonumber
\end{eqnarray}
One can see clearly that ${\mathcal M}$ is homogeneous of degree $2$
and that its first derivatives with respect to its variables is a
linear operator  in $(\phi^\bot , \delta, d , e)$. We will then show
the validity of estimate \equ{estMM}. In a very similar way one
shows the validity of \equ{lips}. To prove \equ{estMM}, we should
treat each one of the above terms. Since the computations are very
similar, we will limit ourselves to treat the term
$$m:= m ({\delta
\over \, \mu_\e} {\mathcal T}_{\, \mu_\e ,\, \Phi_\e } ( Z_0 )
\bar \chi_\e , {d^j \over \, \mu_\e} {\mathcal T}_{\, \mu_\e
,\, \Phi_\e } ( Z_j ) \bar \chi_\e ).
$$
This term can be written as
\begin{equation}
m= \sum_{i=1}^5 m_i
\end{equation}
where
$$
m_1 = \int_{K_\e \times \hat \CC_\e} \left(\nabla_X f \nabla_X g
  - p V_\e^{p-1} fg    \right) \sqrt{\det(g^\e)} \, dz \, dX
$$
$$
m_2 =\int_{K_\e \times \hat \CC_\e}
 \e fg  \, \sqrt{\det(g^\e)} \, dz \, dX, \quad
m_3 = \int_{K_\e \times \hat \CC_\e} \,\Xi_{ij} (\e z ,
X)\,\partial_{i}f\partial_{j}g\,\sqrt{\det(g^\e)} \, dz \, dX
$$
$$
m_4 = \int_{K_\e \times \hat \CC_\e} \pa_{\bar a}f\,\pa_{\bar a}g\,
\sqrt{\det(g^\e)} \, dz \, dX \quad \hbox{and} \quad  m_5 =
\int_{K_\e \times \hat \CC_\e} B(f,g)\,\sqrt{\det(g^\e)} \, dz \, dX
$$
with $f= {\delta \over \, \mu_\e} {\mathcal T}_{\, \mu_\e ,\bar
\Phi_\e } ( Z_0 ) \bar \chi_\e$ and $g=  {d^j \over \, \mu_\e}
{\mathcal T}_{\, \mu_\e ,\, \Phi_\e } ( Z_j ) \bar \chi_\e$.
Using the fact that the function $Z_0$ solves
$$\Delta Z_0+ p w_0^{p-1} Z_0 = 0 \quad \hbox{in}\quad \R^N,
$$
with  $\int_{\R^N_+}
\partial_{\xi_N} Z_0 Z_j = 0$ and integrating by parts in the $X$
variable (recalling the expansion of $\sqrt{{\mbox {det}} g^\e}$),
one gets
\begin{eqnarray*}
m_1 &=& \left\{ \int {\delta d^j \over \, \mu_\e^2} [-\Delta
{\mathcal T}_{\, \mu_\e , \, \Phi_\e } (Z_0) - p V_\e^{p-1}
{\mathcal T}_{\, \mu_\e , \, \Phi_\e} (Z_0 ) ] \bar \chi_\e^2
{\mathcal T}_{\, \mu_\e , \, \Phi_\e} (Z_j ) \sqrt{{\mbox {det}}
g^\e} \right.
\\
&+& \left. \int {\delta d^j \over \, \mu_\e^2} \partial_{\xi_N}
\left( {\mathcal T}_{\, \mu_\e , \, \Phi_\e} (Z_0) \bar \chi_\e
\right) {\mathcal T}_{\, \mu_\e , \, \Phi_\e} (Z_j )
\,\frac{1}{\, \mu_\e}\,(\e \,tr(H)+O(\e^2))\,\bar \chi_\e \right\}
(1+ o(1) )
\end{eqnarray*}
where $o(1) \to 0$ as $\e \to 0$. Thus, a H\"older inequality yields
$$
| m_1 | \leq C \e^{-k}  \e^{\gamma (N-2)} \| \delta \|_{{\mathcal
L}^2 (K)} \| d^j \|_{{\mathcal L}^2 (K)}.
$$
On the other hand, using the orthogonality condition $\int_{\R^N_+}
Z_0 Z_j =0$, we get
$$
|m_2 | \leq C \e \e^{-k} (\int_{|\xi|>\e^{-\gamma}} Z_0 Z_j )   \|
\delta \|_{{\mathcal L}^2 (K)} \| d^j \|_{{\mathcal L}^2 (K)} \leq C
\e^{-k} \e^{1+\gamma (N-3) } \| \delta \|_{{\mathcal L}^2 (K)} \|
d^j \|_{{\mathcal L}^2 (K)}.
$$
Now, since $\int_{\R^N_+} \xi_N \partial_i Z_0 \partial_l Z_j = 0$,
for any $i, j, l=1, \ldots , N-1$, one gets
\begin{eqnarray*}
|m_3| &\leq &C \e \e^{-k} \left( \int_{|\xi|>\e^{-\gamma}} \xi_N
\partial_i Z_0 \partial_l Z_j \right) \| \delta \|_{{\mathcal L}^2
(K)} \| d^j \|_{{\mathcal L}^2 (K)} \\
&\leq& C \e^{-k} \e^{1+\gamma (N-2)} \| \delta \|_{{\mathcal L}^2
(K)} \| d^j \|_{{\mathcal L}^2 (K)}.
\end{eqnarray*}
 A direct computation on the term $m_4$ gives
\begin{eqnarray*}
|m_4 | &\leq& C \e^{-k} \left\{ \e^2  (\int_{|\xi|>\e^{-\gamma}} Z_0
Z_j ) \| \partial_a \delta \|_{{\mathcal L}^2 (K)} \| \partial_a d^j
\|_{{\mathcal L}^2 (K)} \right.
\\
&+& \e (\int_{|\xi|>\e^{-\gamma}} Z_0 Z_j ) ( \|  \delta
\|_{{\mathcal L}^2 (K)} \| \partial_a d^j \|_{{\mathcal L}^2 (K)}  +
\|  \partial_a \delta \|_{{\mathcal L}^2 (K)} \| d^j \|_{{\mathcal
L}^2 (K)} )
\\
&+& \left.  (\int_{|\xi|>\e^{-\gamma}} Z_0 Z_j ) \|  \delta
\|_{{\mathcal L}^2 (K)} \|  d^j \|_{{\mathcal L}^2 (K)} \right\}
\\
&\leq& C \e^{-k} \e^{\gamma (N-3)} [ \| \delta \|_{{\mathcal H}^1
(K)}^2 + \| d^j \|_{{\mathcal H}^1 (K)}^2 ].
\end{eqnarray*}
Since $|m_5| \leq C \sum_{j=1}^4 |m_j|$ we conclude that
$$
|m| \leq C \e^{-k} \e^{\gamma (N-3)} [ \| \delta \|_{{\mathcal H}^1
(K)}^2 + \| d^j \|_{{\mathcal H}^1 (K)}^2 ].
$$
Each one of the terms appearing in \equ{auxi} can be estimated to
finally get the validity of \equ{estMM}. This conclude the proof of
Theorem \ref{teo4.1}.

\end{proof}

\

\noindent {\bf Acknowledgments}. This work has been supported by grants Fondecyt 1110181, 1100164 and 1120151,
and Fondo Basal CMM.  We would like to thank the anonymous referees for a careful reading of the paper, which meant
an important improvement of the original version of the paper.

\


\begin{thebibliography}{9999}


\bibitem{am0} Adimurthi, Mancini, G.   {\em The Neumann problem for elliptic equations with critical nonlinearity},
A tribute in honour of G. Prodi. Scuola Norm. Sup. Pisa 9--25
(1991).



  \bibitem{am1} Adimurthi, Mancini, G.  {\em Geometry and topology of the boundary in the critical Neumann
problem.} J. Reine Angew. Math. 456, 1--18 (1994).


  \bibitem{amy}  Adimurthi,  Mancini G.,  Yadava S.L., {\em The role of the mean curvature in semilinear Neumann
problem involving critical exponent,} Comm. Partial Differential
Equations 20 (3-4) (1995) 591--631.


 \bibitem{apy} Adimurthi, Pacella, F., Yadava, S.L.   {\em Interaction between the geometry of the boundary and
positive solutions of a semilinear Neumann problem with critical
nonlinearity.} J. Funct. Anal. 113, 318--350 (1993).

\bibitem{apy1}
Adimurthi, Pacella F.,  Yadava  S. L., {\em Characterization of
concentration points and L1- estimates for solutions of a semilinear
Neumann problem involving the critical Sobolev exponent}, Diff.
Integ. Equ. 8 (1995), 42--68.


\bibitem{ao1} Ao W.,  Musso M., Wei J. {\em On Spikes Concentrating on Line-Segments to a Semilinear Neumann Problem}.  J. Differential Equations 251 (2011), no. 4-5, 881--901.



\bibitem{ao2} Ao W., Musso M., Wei J. {\em Triple Junction Solutions for a Singularly Perturbed Neumann Problem}.  SIAM J. Math. Anal. 43 (2011), no. 6, 2519--2541.

\bibitem{b}
Byeon, J. {\em Singularly perturbed nonlinear Neumann problems with
a general nonlinearity.}
 J. Differential Equations 244 (2008), no. 10, 2473--2497.


\bibitem{cgs}
Caffarelli, L., Gidas, B., Spruck, J. {\em Asymptotic symmetry and
local behavior of semilinear elliptic equations with critical
Sobolev growth.} Comm. Pure Appl. Math. 42, 271--297 (1989).


\bibitem{cao}
Cao, D., Kupper, T. {\em On the existence of multipeaked solutions
to a semilinear Neumann problem.} Duke Math. J. 97 (1999), no. 2,
261--300.


\bibitem{chavel}
I. Chavel, Riemannian Geometry, a Modern Introduction, Cambridge
Tracts in Math., vol. 108, Cambridge Univ. Press, Cambridge, 1993.


\bibitem{dy}
Dancer, E. N., Yan, S. {\em Multipeak solutions for a singularly
perturbed Neumann problem.} Pacific J. Math. 189, 241--262 (1999).



\bibitem{df0}
 del Pino, M., Felmer, P. {\em Spike-layered solutions of singularly perturbed elliptic problems in a degenerate setting.}
  Indiana Univ. Math. J. 48  no. 3, 883--898  (1999).


%




\bibitem{dfw}
del Pino, M., Felmer, P., Wei, J. {\em On the role of mean curvature
in some singularly perturbed Neumann problems.} SIAM J. Math. Anal.
31, 63--79 (2000).




\bibitem{dkwy}
del Pino, M.,  Kowalczyk, M.,  Wei J.,  Yang J.,  {\em Interface
foliations near minimal submanifolds in Riemannian manifolds with
positive Ricci curvature}. Geom. Functional Analysis No. 20 (2010)
918--957.




\bibitem{dmp}
del Pino, M., Musso, M., Pistoia, A. {\em Supercritical boundary
bubbling in a semilinear Neumann problem.} Ann. Inst. H. Poincare
Anal. Non-Linearie 22 (1), 45--82 (2005).

\bibitem{dmpa}
del Pino, M., Musso M., Pacard F., {\em Bubbling along geodesics
near the second critical exponent.} J. Eur. Math. Soc. (JEMS) 12
(2010), no. 6, 1553--1605.

\bibitem{gg}
Ghoussoub, N., Gui, C. {\em Multi-peak solutions for a semilinear
Neumann problem involving the critical Sobolev exponent.} Math. Z.
229, 443--474 (1998).


\bibitem{ggz}
 Ghoussoub, N., Gui, C., Zhu, M. {\em On a singularly perturbed Neumann problem with the
critical exponent.} Comm. Partial Differential Equations 26,
1929--1946 (2001).

\bibitem{gm}
Gierer, A., Meinhardt, H. {\em A theory of biological pattern
formation.}  Kybernetik (Berlin) 12, 30--39 (1972).




\bibitem{gp}
 Grossi, M., Pistoia, A., Wei, J. {\em Existence of multipeak solutions for a semilinear elliptic
problem via nonsmooth critical point theory. } Calc. Var. Partial
Differential Equations 11, 143--175 (2000).


\bibitem{g}
Gui, C. {\em Multi-peak solutions for a semilinear Neumann problem.}
Duke Math. J. 84, 739--769 (1996).









\bibitem{gl}
Gui, C., Lin, C.-S. {\em Estimates for boundary-bubbling solutions
to an elliptic Neumann problem.} J. Reine Angew. Math. 546, 201--235
(2002).




\bibitem{gw}
Gui, C., Wei, J. {\em Multiple interior peak solutions for some
singularly perturbed Neumann problems.} J. Differential Equations
158, 1--27 (1999).





\bibitem{l}
Li, Y.Y. {\em On a singularly perturbed equation with Neumann
boundary condition.} Comm. Partial Differential Equations 23,
487--545 (1998).

\bibitem{lin}
Lin, C.-S. {\em Locating the peaks of solutions via the maximum
principle, I. Neumann problem.} Comm. Pure Appl. Math. 54, 1065--1095
(2001).




\bibitem{lnt}

Lin, C.-S., Ni,W.-M., Takagi, I.{\em  Large amplitude stationary
solutions to a chemotaxis system.} J. Differential Equations 72,
1--27 (1988).

\bibitem{lww}
 Lin, C.-S. Wang, L., Wei, J. {\em Bubble accumulations in an elliptic Neumann problem with critical Sobolev exponent.} Calc. Var.
  Partial Differential Equations 30 (2007), no. 2, 153--182.

\bibitem{lnw}
Lin, F.-H. , Ni, W.-M., Wei, J. {\em On the number of interior peak
solutions for a singularly perturbed Neumann problem.}
 Comm. Pure Appl. Math. 60 (2007), no. 2, 252--281.




\bibitem{msw}
 Maier-Paape, S., Schmitt, K.,Wang, Z.Q. {\em On Neumann problems for semilinear elliptic equations
with critical nonlinearity existence and symmetry of multi-peaked
solutions.} Comm. Partial Differential Equations 22, 1493--1527
(1997).


\bibitem{mmah}
Mahmoudi F.,   Malchiodi A., {\em Concentration on minimal
submanifolds for a singularly perturbed Neumann problem.} Adv. Math.
209 (2007), no. 2, 460--525.


\bibitem{mmp} Mahmoudi, F.,  Mazzeo, R., Pacard, F. {\em Constant mean
curvature hypersurfaces condensing along a submanifold.}
Geom. funct. anal. Vol.  {16} (2006) 924--958.

\bibitem{mm1}
 Malchiodi A.,  Montenegro M., {\em Boundary concentration phenomena
for a singularly perturbed elliptic problem}, Comm. Pure Appl. Math,
15 (2002), 1507--1568.

\bibitem{mm2}
Malchiodi A., Montenegro M., {\em
Multidimensional Boundary-layers for a singularly perturbed Neumann
problem,} Duke Math. J. 124:1 (2004), 105--143.


\bibitem{m}
 Malchiodi A., {\em Concentration at curves for a singularly perturbed Neumann problem
in three-dimensional domains.} Geom. Funct. Anal. 15 (2005), no. 6,
1162--1222.


\bibitem{ni0}
 Ni, W.-M. {\em Diffusion, cross-diffusion, and their spike-layer steady states.}
  Notices Amer. Math. Soc. 45 (1998), no. 1, 9--18.

\bibitem{ni}
 Ni, W.-M. {\em Qualitative properties of solutions to elliptic problems.} In Stationary Partial Differential
Equations. Handbook Differential Equations, vol. I, pp. 157--233.
North-Holland, Amsterdam (2004).


\bibitem{npt}
Ni, W.-M., Pan, X.-B., Takagi, I. {\em Singular behavior of
least-energy solutions of a semi-linear Neumann problem involving
critical Sobolev exponents.} Duke Math. J. 67, 1--20 (1992).

\bibitem{nt1}
 Ni, W.-M., Takagi, I. {\em
On the shape of least-energy solutions to a semi-linear problem
Neumann problem.} Comm. Pure Appl. Math. 44, 819--851 (1991).

\bibitem{nt2}
 Ni, W.-M., Takagi, I. {\em Locating the peaks of least-energy solutions to a semi-linear Neumann
problem.} Duke Math. J. 70, 247--281 (1993).



\bibitem{Po}
S. Pohozaev, {\em Eigenfunctions of the equation $\Delta u +\lambda
f (u) =0$}, Soviet. Math. Dokl. { 6}, (1965), 1408--1411.



\bibitem{rey0}
Rey, O.  {\em Boundary effect for an elliptic Neumann problem with
critical nonlinearity, } Comm. Part. Diff. Equ. 22 (1997),
1055--1139.


\bibitem{rey1}
Rey, O.  {\em An elliptic Neumann problem with critical nonlinearity
in three dimensional domains.} Comm. Contemp. Math. 1, 405--449
(1999).

\bibitem{rey2}
Rey, O. {\em The question of interior blow-up points for an elliptic
Neumann problem: the critical case.} J. Math. Pures Appl. 81,
655--696 (2002).



\bibitem{rey}
Rey, O. {\em The role of the Green's function in a nonlinear
elliptic equation involving the critical Sobolev exponent.} J.
Funct. Anal. 89 (1990), no. 1, 1--52.

\bibitem{reywei}
 Rey, O., Wei, J. {\em Arbitrary number of positive solutions for an elliptic problem with critical
nonlinearity.} J. Eur. Math. Soc. (JEMS) 7, 449--476 (2005).

\bibitem{Schoen-Yau} R. Schoen and S.T. Yau, Lectures on Differential Geometry, International Press (1994).

\bibitem{wang}
Wang, X.-J. {\em Neumann problem of semilinear elliptic equations
involving critical Sobolev exponents.} J. Differential Equations 93,
283--310 (1991).



\bibitem{zqwang1}
Wang, Z.Q. {\em High energy and multi-peaked solutions for a
nonlinear Neumann problem with critical exponent.} Proc. Roy. Soc.
Edimburgh 125A, 1003--1029 (1995).


\bibitem{wwy}
Wang, L., Wei, J., Yan, S. {\em  A Neumann problem with critical
exponent in nonconvex domains and Lin-Ni's conjecture.}
 Trans. Amer. Math. Soc. 362 (2010), no. 9, 4581--4615.



\bibitem{wei}
Wei J., {\em On the boundary spike layer solutions of a singularly
perturbed semilinear Neumann problem,} J. Differential Equations 134
(1997) 104--133.



\bibitem{weixu}
 Wei, J., Xu, X. {\em Uniqueness and a priori estimates for some nonlinear elliptic Neumann equations
in R3.} Pacific J. Math. 221, 159--165 (2005).











\end{thebibliography}
\end{document}